\DeclareRobustCommand{\SkipTocEntry}[5]{}
\newcommand{\numberset}{\mathbb} 
\newcommand{\N}{\numberset{N}}
\let\S\relax
\let\K\relax
\let\P\relax
\let\F\relax
\newcommand{\F}{\mathcal{F}}
\newcommand{\T}{\mathcal{T}}
\newcommand{\K}{\mathcal{K}}
\newcommand{\S}{\mathcal{S}}
\newcommand{\Nn}{\mathcal{N}}
\newcommand{\P}{\mathcal{P}}
\newcommand{\Cc}{\mathcal{C}}
\newcommand{\I}{\mathcal{I}}
\let\E\relax
\newcommand{\E}{\numberset{E}}
\newcommand{\cK}{\mathcal{K}}
\newcommand{\cA}{\mathcal{A}}
\newcommand{\cE}{\mathcal{E}}
\newcommand{\cL}{\mathcal{L}}
\newcommand{\EE}{\cE}
\newcommand{\EF}{\mathfrak{E}}
\newcommand{\AF}{\cA}
\newcommand{\KK}{\cK}
\newcommand{\LL}{\cL}
\newcommand{\GG}{G}
\newcommand{\PI}{\langle P_\I\rangle}
\newcommand{\CI}{\langle \Cc\I\rangle}
\newcommand{\hlim}{\text{ho-lim}\,}
\DeclareMathOperator{\RKK}{RKK}
\DeclareMathOperator{\KKK}{KK}
\DeclareMathOperator{\Hom}{Hom}
\DeclareMathOperator{\Cone}{Cone}
\DeclareDocumentCommand{\KK}{O{} O{} m m}{\KKK_{#1}^{#2}(#3,#4)}
\newcommand{\hatotimes}{\,\widehat{\otimes}\,}
\newcommand{\hot}{\widehat{\otimes}}
\let\E\relax
\let\Res\relax
\newcommand{\E}{\underline{E}}
\DeclareMathOperator{\Ind}{Ind}
\DeclareMathOperator{\Res}{Res}
\newcommand{\tens}[3]{%
  \mathbin{\mathop{\tensor[_#1]{\otimes}{_#3}}\displaylimits_{#2}}%
}
\newtheorem{theoremA}{Theorem}
\author{Christian Bönicke}
\address{School of Mathematics and Statistics, University of Glasgow, University Place, Glasgow G12 8QQ, United Kingdom}
\email{christian.bonicke@glasgow.ac.uk}
\curraddr{\textit{Current Address:} School of Mathematics, Statistics and Physics, Newcastle University, Newcastle upon Tyne NE1 7RU, United Kingdom}
\email{christian.bonicke@ncl.ac.uk}
\author{Valerio Proietti}
\address{Graduate School of Mathematical Sciences, The University of Tokyo}
\email{valerio@ms.u-tokyo.ac.jp}
\title[Baum--Connes conjecture for étale groupoids]{Categorical approach to the Baum--Connes conjecture for étale groupoids}
\begin{document}
\frontmatter

\begin{abstract}
We consider the equivariant Kasparov category associated to an étale groupoid, and by leveraging its triangulated structure we study its localization at the ``weakly contractible'' objects, extending previous work by R.~ Meyer and R.~Nest. We prove the subcategory of weakly contractible objects is complementary to the localizing subcategory of projective objects, which are defined in terms of ``compactly induced'' algebras with respect to certain proper subgroupoids related to isotropy. The resulting ``strong'' Baum--Connes conjecture implies the classical one, and its formulation clarifies several permanence properties and other functorial statements. We present multiple applications, including consequences for the Universal Coefficient Theorem, a generalized ``Going-Down'' principle, injectivity results for groupoids that are amenable at infinity, the Baum-Connes conjecture for group bundles, and a result about the invariance of $K$-groups of twisted groupoid $C^*$-algebras under homotopy of twists.
\end{abstract}

\subjclass{19K35; 46L95, 18G80}
\keywords{groupoid, $\KKK$-theory, Baum--Connes conjecture, triangulated categories, localization.}

\maketitle
\setcounter{tocdepth}{2}
\tableofcontents

\mainmatter

\addtocontents{toc}{\SkipTocEntry}\section*{Introduction and main results}

Over the last decades étale groupoids and their homological and $K$-theoretical invariants have played an increasingly important role in the fields of operator algebras, noncommutative geometry and topological dynamics.
Kumjian and Renault showed that $C^*$-algebras associated with groupoids provide versatile models for large classes of $C^*$-algebras \cite{kum:diagonals,ren:cartan}. More recently, Li showed that every classifiable $C^*$-algebra admits a (twisted) groupoid model \cite{li:cartan}. One of the biggest open questions in the field concerns the Universal Coefficient Theorem (UCT) and work of Barlak and Li \cite{bali:CartanUCT} showed that the UCT problem can be translated to the question whether every nuclear $C^*$-algebra admits a groupoid model.

In another direction, Matui's works \cite{matui:hk,matui:productSFT} have kickstarted a fruitful line of research in topological dynamics using étale groupoids at its heart (see also \cite{li:orbitequivalence}). In this area it turns out that many invariants for topological dynamical systems can most naturally been defined in the framework of groupoid homology or the $K$-theory of groupoid $C^*$-algebras. 
Consequently, there is a great deal of interest around the homology and $K$-theory of \'etale groupoids and their interaction. Examples of recent research in this direction are the HK conjecture of Matui \cite{matui:hk}, or the relation between the homology theory of Smale spaces and the $K$-theory of their corresponding $C^*$-algebras \cite{put:HoSmale}. In this latter example, a special case of the methods developed here (i.e., when the groupoid is torsion-free and ample) has already been applied with great success and lead to many interesting results in topological dynamics, as is demonstrated by the papers \cite{bdgw:matui, valmak:groupoid, valmak:groupoidtwo, valmak:groupoidthree}.

Motivated by these developments we set out to develop the category-theory based approach to the Baum--Connes conjecture for the class of étale groupoids in full generality. This approach  is very suitable for formulating and proving general statements about the Baum--Connes conjecture, and for obtaining functorial properties of the assembly map and $K$-theoretic duality type results \cite{meyereme:dualities,val:shi}. As already observed by Meyer and Nest \cite{nestmeyer:loc}, many permanence results of the Baum--Connes conjecture become quite accessible in this setup. Besides this, several results obtained by the first named author \cite{bon:goingdownbc, B21} and C.~Dell'Aiera \cite{bondel:kun} are generalized to all \'etale groupoids.

The following statement summarizes a selection of applications that we are able to obtain through this approach. Some statements are deliberately vague to spare the reader the technical details at this stage, we refer to the final section of this article (Section \ref{sec:app}) for the definitions and more precise statements.

\begin{theoremA}\label{thm:0}
Let $G$ be an \'etale groupoid which is second countable, locally compact, and Hausdorff.
\begin{enumerate}
    \item Suppose $\Sigma$ is a twist over $G$. If $G$ satisfies the strong Baum-Connes conjecture, then $C_r^*(G,\Sigma)$ satisfies the UCT.
    \item The $K$-theory of $C_r^*(G,\Sigma)$ only depends on the homotopy class of $\Sigma$.
    \item If $G$ is strongly amenable at infinity, then there is a dual Dirac morphism for $G$. In particular, the Baum-Connes assembly map is split-injective.
    \item The (strong) Baum-Connes conjecture enjoys many permanence properties both with respect to the involved groupoid (it passes to subgroupoids, direct products, increasing unions) and the coefficient algebra (inductive limits, tensor products).
\end{enumerate}
  
\end{theoremA}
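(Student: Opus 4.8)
The plan is to read off all four statements from the complementary pair established in the body of the paper: the localizing subcategory $\CI$ generated by the compactly induced algebras $\Ind_H^G(B)$, with $H$ ranging over the proper subgroupoids attached to the isotropy of $G$, together with its right-orthogonal complement $\CI^\perp$, the subcategory of weakly contractible objects. Throughout, ``$G$ satisfies the strong Baum--Connes conjecture'' means that the Dirac morphism $D_A\colon\tilde P(A)\to A$ is a $\KKK^G$-equivalence for every coefficient algebra $A$, equivalently that $\CI$ exhausts $\KKK^G$. I would dispatch (4) first, as it is the most formal: functoriality of $A\mapsto\tilde P(A)$ and of the weakly contractible part, combined with the compatibility of the restriction, induction, and tensor product functors with the generators $\Ind_H^G(B)$, shows that the classes of groupoids and of coefficients for which $D_A$ is invertible are closed under the listed operations. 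Passage to open or closed subgroupoids and to direct products uses that restriction and induction are compatible with both halves of the pair; increasing unions and inductive limits of coefficients use that $\CI$ and $\CI^\perp$ are closed under the relevant homotopy colimits; tensor products use that $\Ind_H^G(B)\otimes D$ is again compactly induced. Each of these is a short diagram chase once the functors are in place.

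For (1), I would realize the twist as a coefficient, $C_r^*(G,\Sigma)\cong A_\Sigma\rtimes_r G$, where $A_\Sigma$ is the $C_0(G^{(0)})$-algebra of sections of the associated line bundle. Under the strong conjecture $A_\Sigma\cong\tilde P(A_\Sigma)$ in $\KKK^G$, and $\tilde P(A_\Sigma)$ is built from the generators $\Ind_H^G(B)$ by countable direct sums, suspensions, mapping cones, and homotopy colimits. Applying the reduced crossed-product functor --- which is triangulated and commutes with these colimits --- together with the imprimitivity equivalence $\Ind_H^G(B)\rtimes_r G\sim B\rtimes_r H$, reduces the UCT for $C_r^*(G,\Sigma)$ to the statement that $B\rtimes_r H$ lies in the bootstrap class whenever $H$ is one of the proper isotropy subgroupoids and $B$ lies in the bootstrap class. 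For such $H$ the crossed product is, stably, a continuous field of type~I algebras over $H^{(0)}/H$ governed by the representation theory of the compact isotropy groups, so the claim holds; since the bootstrap class is triangulated and localizing, $C_r^*(G,\Sigma)$ satisfies the UCT.

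For (2), a homotopy of twists over $G$ is the datum of a twist $\widetilde\Sigma$ over $G\times[0,1]$, and $C_r^*(G\times[0,1],\widetilde\Sigma)$ is the section algebra of a $C^*$-bundle over $[0,1]$ with fibres $C_r^*(G,\Sigma_t)$. I would prove that the two endpoint evaluations induce $K$-theory isomorphisms by a local-triviality argument: over a sufficiently fine subdivision $0=t_0<\dots<t_n=1$, the restriction of $\widetilde\Sigma$ to $G\times[t_i,t_{i+1}]$ is isomorphic to the pullback of $\Sigma_{t_i}$ along the retraction onto $G\times\{t_i\}$ --- this is exactly where the continuity clause in the definition of a homotopy of twists enters --- so on each subinterval the bundle is constant and its endpoint evaluations are homotopy equivalences; chaining these identifications yields $K_*\bigl(C_r^*(G,\Sigma_0)\bigr)\cong K_*\bigl(C_r^*(G,\Sigma_1)\bigr)$. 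Here the equivariant $\KKK$-theory machinery serves only to package descent and the six-term exact sequences uniformly, and no form of the conjecture is needed. For (3), the hypothesis that $G$ is strongly amenable at infinity should be arranged so as to furnish a proper $G$-space --- a fibrewise coarse embedding, or a Rips-type complex at infinity --- carrying enough structure to run the Higson--Kasparov--Emerson--Meyer construction of a dual-Dirac morphism; one then obtains $\gamma\in\KKK^G(\C,\C)$, an idempotent that restricts to the identity on every object of $\CI$, so that $\gamma$ splits the natural map $K_*\bigl(\tilde P(A)\rtimes_r G\bigr)\to K_*\bigl(A\rtimes_r G\bigr)$; since its source is the domain of the assembly map, the assembly map is split injective. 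This extends the constructions of the first author and of Dell'Aiera from ample to general \'etale groupoids.

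The hardest step will be the analytic input to (3): producing a dual-Dirac morphism for a groupoid that is only assumed strongly amenable at infinity requires a bona fide fibrewise coarse embedding (or affine-isometric-type action) together with the attendant bundle-of-Hilbert-spaces bookkeeping --- once this is in place, the rest of (3) is formal. Secondary difficulties are, in (1), controlling crossed products of the proper isotropy subgroupoids by arbitrary bootstrap coefficients and confirming they remain in the bootstrap class, and, in (2), establishing the local triviality of the bundle of twists over the homotopy parameter directly from the definition.
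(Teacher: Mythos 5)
Your treatment of parts (1) and (4) follows essentially the paper's route: (4) is indeed read off from the complementary pair $(\langle\Cc\I\rangle,\mathcal{N})$ together with the compatibility of $\Res$, $\Ind$, tensor products and homotopy colimits (though the inductive-limit case needs the $3\times 3$ axiom and admissibility of the system, not just a "short diagram chase", and passage to subgroupoids needs $H$ closed in $G|_{H^0}$ --- the paper points out that Tu's earlier argument here was erroneous). For (1) the paper uses the van Erp--Williams stabilisation trick to write $C_r^*(G,\Sigma)$ as Morita equivalent to $K(H)\rtimes_r G$ with $K(H)$ type I, and then only needs that $B\rtimes_r H$ is type I for \emph{type I} coefficients $B$ and proper $H$ (Tu); your stronger claim that $B\rtimes_r H$ stays in the bootstrap class for \emph{arbitrary} bootstrap $B$ is not justified and not needed.

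Parts (2) and (3) contain genuine gaps. For (2), your local-triviality argument fails: a homotopy of twists gives a continuous field $t\mapsto C_r^*(G,\Sigma_t)$ over $[0,1]$, but such fields are not locally trivial and their endpoint evaluations need not induce $K$-theory isomorphisms --- this is precisely the subtlety the paper flags, citing a counterexample over the trivial groupoid $[0,1]$ to the analogous claim in Tu's work. Your assertion that "no form of the conjecture is needed" contradicts the actual theorem, whose hypothesis is that $G$ satisfies Baum--Connes with coefficients; the proof goes through the Going-Down principle (verify the evaluation map is an isomorphism after restriction to each compact action, then transfer via the assembly map). For (3), you have misidentified where the work lies: strong amenability at infinity does not furnish a fibrewise coarse embedding or an affine isometric action on a Hilbert-space bundle, so the Higson--Kasparov--type construction of a $\gamma$-element is unavailable. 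The paper's argument is soft: take the witnessing space $Y$ with proper momentum map admitting a section, replace it by fibrewise probability measures so that $p^{-1}(H^0)\to H^0$ is an $H$-equivariant homotopy equivalence for every proper open $H\subseteq G$; then $\mathbf{p}\in\KKK^G(C_0(G^0),C_0(Y))$ is a weak equivalence, amenability of $G\ltimes Y$ makes $D_{C_0(Y)}$ invertible, and the dual Dirac morphism is the composite $P(\mathbf{p})^{-1}\circ D_{C_0(Y)}^{-1}\circ \mathbf{p}$. (Also note that for a groupoid the relevant unit object is $C_0(G^0)$, not $\C$.)
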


The results in Theorem \ref{thm:0} should be compared to another line of research, which uses quantitative $K$-theory methods to obtain many interesting related results on the UCT, the Baum-Connes conjecture and its permanence properties \cite{GWY23,OO23,WY23}. 

In \cite{nestmeyer:loc} R.~Meyer and R.~Nest established the category theoretic framework we are after in the setting of locally compact groups, and more generally for transformation groups. To this end, they leverage the triangulated structure of the equivariant bivariant Kasparov category, and in particular the notion of complementary subcategories and localization. This paper extends these methods to include étale groupoids.

A related approach is described in \cite{luckdavis:fjbc}, where the authors give a unified approach to various isomorphism conjectures, including the Baum--Connes conjecture, by means of the orbit category and the homotopy theory of spectra. In both approaches, the role of \emph{weakly} contractible objects, defined in terms of a certain family of subgroups of a given group $G$, is in a certain sense fundamental. For the Baum--Connes conjecture associated to a discrete group, this family is given by the finite subgroups of $G$. Analogously, when $G$ is locally compact, the family is given by the compact subgroups.

Thus the first task when attempting to generalise this approach is the identification of a suitable class of subgroupoids of a given étale groupoid $G$. Associated to this class is a homological ideal in the Kasparov category $\KKK^G$, which is the starting point for several notions of \emph{relative} homological algebra, e.g., the notion of projective object. In the words of Meyer and Nest \cite[page 215]{nestmeyer:loc}, ``it is not so clear what should correspond to compact subgroups'' in the case of the Baum--Connes conjecture for groupoids.

A partial solution to this question was offered in \cite{meyereme:dualities}, where the authors show a relation of complementarity between the subcategory of proper objects and the objects $A\in \KKK^G$ such that $p^*(A)$ is contractible in $\KKK^{G\ltimes \E G}$. Here $p^*$ is the pullback functor associated to the projection $p\colon G\ltimes\E G\to G$ where $\E G$ denotes the universal example for proper actions (which is well-defined for groupoids, see for example \cite{tu:novikov}). This approach is based on the fact that $p^*$ is, effectively speaking, the localization functor which we seek (see Theorem \ref{thm:rkkloc}). However, this is not completely satisfactory because (\textbf{a}) it relies on the existence of a Kasparov dual \cite[Theorem 4.37]{meyereme:dualities}, and (\textbf{b}) it does not present the projective objects in terms of a simpler class of ``building blocks'' constructed via induction on a suitable family of subgroupoids.

This paper remedies these shortcomings by using a ``slice theorem'' (see Proposition \ref{prop:prodr} below and compare with \cite[Proposition 2.42]{tu:propnonhaus}) for étale groupoids acting properly on a space, which allows us to identify a family of subgroupoids that we call ``compact actions'', as they are isomorphic to action groupoids for finite groups sitting inside the isotropy of $G$. On a first approximation, we can say that the family of compact subgroups is replaced in our case by the family of \emph{proper} subgroupoids of $G$ (see Lemma \ref{lem:contractible objects} for more details on this statement).

Having this, most of the machinery from \cite{nestmeyer:loc} can be reproduced in the groupoid context in a straightforward fashion, as it is mostly formal and inherited from the more general theory of triangulated categories. We say ``most'' because we encountered another technical difficulty along the way, which we now briefly explain. Having defined projective objects as retracts of (direct sums of) ``compactly induced'' objects, we were facing the issue of identifying the localizing subcategory of proper objects with the one induced by projectives. Indeed, a result of this kind is highly desirable because not only it would match up nicely with the statement in \cite{meyereme:dualities}, but more importantly it allows to rephrase the main result of \cite{tu:moy}, on the Baum--Connes conjecture for a groupoid $G$ satisfying the Haagerup property, as a proof that the category $\KKK^G$ is generated by projective objects as defined by us.

A blueprint for this result ought to be found in \cite{nestmeyer:loc}, and indeed \cite[Theorem 7.1]{nestmeyer:loc} and its applications correspond to the statement we need. Nevertheless, we were not able to simply generalize the proof therein, essentially because (\textbf{a}) our compact actions are \emph{open} subgroupoids, and (\textbf{b}) the excisive properties of $\RKK^G(-\,;A, B)$ are not entirely clear (at least to us) in general, even in simple cases such as homotopy pushouts. Nevertheless, by briefly passing to $E$-theory (which has long exact sequences without extra hypotheses) and using the fact that localizing subcategories are closed under direct summands, we are able to find an alternative proof of the identification of localizing subcategories of (respectively) compactly induced and proper objects.

Before passing to the organization of the paper, we present two of the core results which should serve as a brief summary of this work. For more details on definitions and applications the reader should consult sections \ref{sec:prelims} and \ref{sec:app}.

\begin{theoremA}\label{thm:a}
Let $\mathcal{N}\subseteq \KKK^G$ be the subcategory of $G$-$C^*$-algebras $A$ such that $\Res^G_H(A)\cong 0$ for any proper open subgroupoid $H\subseteq G$. Let $\mathcal{P}\subseteq \KKK^G$ be the smallest localizing triangulated subcategory containing proper $G$-$C^*$-algebras. Then $(\mathcal{P},\mathcal{N})$ is a pair of complementary subcategories and $\mathcal{P}$ is generated by ``\emph{compactly induced}'' objects (see Theorem \ref{thm:bccc} for details).
\end{theoremA}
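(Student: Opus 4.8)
The plan is to establish the statement in two logically independent halves: first that $(\mathcal{P},\mathcal{N})$ is a complementary pair of subcategories in the triangulated category $\KKK^G$, and second that $\mathcal{P}$ coincides with the localizing subcategory generated by the compactly induced objects. For the first half, I would follow the general formalism of complementary pairs in triangulated categories (à la Meyer--Nest): given a localizing subcategory $\mathcal{P}$ generated by a set of objects (here the proper $G$-$C^*$-algebras, or, better, a generating subset thereof), one always obtains a candidate complement $\mathcal{N}$ consisting of the objects $N$ with $\KKK^G(P,N)=0$ for all $P\in\mathcal{P}$, provided one can construct, for every $A\in\KKK^G$, an exact triangle $\tilde P(A)\to A\to \tilde N(A)\to \Sigma\tilde P(A)$ with $\tilde P(A)\in\mathcal{P}$ and $\tilde N(A)\in\mathcal{N}$. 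The existence of such an approximation triangle is where I would invoke the machinery already in the paper: the pullback functor $p^*\colon\KKK^G\to\KKK^{G\ltimes\underline{E}G}$ is (up to the identification in Theorem~\ref{thm:rkkloc}) the localization functor at $\mathcal{N}$, so the triangle can be extracted from the adjunction $(p_!,p^*)$ and the fact that $p^*$ kills exactly the weakly contractible objects. Then I must check that the purported $\mathcal{N}$ in the statement — the $A$ with $\Res^G_H(A)\cong 0$ for every proper \emph{open} subgroupoid $H$ — agrees with the orthogonal complement $\mathcal{P}^\perp$; this is precisely the content flagged in the introduction as Lemma~\ref{lem:contractible objects}, comparing ``weakly contractible'' with vanishing of all restrictions to proper open subgroupoids, and I would cite it.

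For the second half — that $\mathcal{P}$ is generated by compactly induced objects — the key input is the slice theorem (Proposition~\ref{prop:prodr}). The argument runs: any proper $G$-$C^*$-algebra $A$ is, by properness, a $G$-$C^*$-algebra over a proper $G$-space $X$; the slice theorem decomposes $X$ locally as induced from ``compact actions'' (action groupoids of finite groups inside the isotropy), and correspondingly exhibits $A$ as built, via extensions and direct limits (equivalently, inside the localizing subcategory), from algebras of the form $\Ind_H^G B$ where $H$ is a compact action. Hence every proper algebra lies in the localizing subcategory $\langle\text{compactly induced}\rangle$, giving one inclusion; the reverse inclusion is immediate because compactly induced algebras are themselves proper (induction from a proper subgroupoid yields a proper $G$-algebra), so they lie in $\mathcal{P}$, and $\mathcal{P}$ is localizing. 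I would phrase this carefully so that the ``building block'' decomposition only uses operations (mapping cones, countable direct sums, colimits) under which localizing subcategories are closed.

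**Main obstacle.** The hard part, as the authors themselves signal, is the identification of the two localizing subcategories at the level needed for the applications — in particular making the slice-theoretic decomposition of a general proper algebra go through with the correct class of \emph{open} compact-action subgroupoids, and controlling exactness. Concretely, the difficulty is that the induction is along open rather than closed subgroupoids, and the excision properties of $\RKK^G(-;A,B)$ are not transparent for the relevant homotopy pushouts; I would circumvent this exactly as the introduction indicates — temporarily pass to $E$-theory, where long exact sequences are available unconditionally, run the decomposition argument there to place proper algebras in the localizing subcategory generated by compactly induced ones, and then transport the conclusion back to $\KKK^G$ using that localizing subcategories are closed under direct summands (so the $E$-theoretic membership, combined with the comparison functor $\KKK^G\to E^G$, yields the $\KKK$-theoretic statement). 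Verifying that this detour is valid — i.e.\ that no information is lost in passing $E$-theory and back — is the step I expect to demand the most care.
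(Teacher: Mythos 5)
Your first half has a genuine gap: the route you propose to the approximation triangle is circular. You want to extract the triangle $\tilde P(A)\to A\to\tilde N(A)$ from the claim that $p^*\colon\KKK^G\to\KKK^{G\ltimes\underline{E}G}$ is the localization functor at $\mathcal{N}$, invoking an adjunction $(p_!,p^*)$. But the statement that $p^*$ realizes the localization (Theorem \ref{thm:rkkloc}) is proved in the paper \emph{as a consequence} of the complementarity, not as an input to it: its proof uses the canonical triangle $P(A)\to A\to N(A)$ and reduces to compactly induced objects via the very complementary pair you are trying to construct. Moreover, a left adjoint $p_!$ is not available in general; its existence is essentially the existence of a Kasparov dual, which is exactly the hypothesis the paper is written to avoid (this is shortcoming (\textbf{a}) flagged in the introduction regarding the Emerson--Meyer approach). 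The engine the paper actually uses is different: the induction--restriction adjunction $\Ind_H^G\dashv\Res_G^H$ for \emph{open} subgroupoids (Theorem \ref{thm:iradj}), which produces a globally defined left adjoint $F^\dagger$ of the joint restriction functor $F$ to the countable family $\F$ of compact actions; Meyer's abstract criterion (a homological ideal compatible with countable direct sums and with enough projectives yields a complementary pair) then gives $(\CI,N_\I)$ complementary directly, with the projectives identified as retracts of direct sums of compactly induced objects. Note also that the order of the argument is reversed relative to yours: complementarity of $(\CI,\mathcal{N})$ comes first, and only afterwards is $\CI=\langle\P r\rangle$ established; the characterization of $\mathcal{N}$ by vanishing of restrictions to all proper open subgroupoids (Lemma \ref{lem:contractible objects}) in turn depends on $\CI=\langle\P r\rangle$, so it cannot be cited at the outset as you propose.

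Your second half is closer to the mark --- the slice theorem plus a detour through $E$-theory for excision is indeed the paper's strategy --- but the transport mechanism you describe does not work as stated. Membership in a localizing subcategory of $E_G$ cannot be pulled back to membership in a localizing subcategory of $\KKK^G$ along the comparison functor $\KKK^G\to E_G$, and closure under direct summands is not what bridges the two theories. What the paper actually does is run the Mayer--Vietoris and Milnor $\varprojlim^1$ argument only for the commutative (hence $\KKK^G$-nuclear) algebra $C_0(G^0)$ of a proper groupoid, where $\KKK^G(C_0(V),N)\cong E_G(C_0(V),N)$ holds literally (Corollary \ref{coro:kkenuc}), so the vanishing computed in $E$-theory \emph{is} a vanishing in $\KKK$-theory; this places $C_0(\underline{E}G)$ in $\CI_{G\ltimes\underline{E}G}$. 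A general proper algebra $A$ (which need not be nuclear, so the direct decomposition you sketch is unavailable) is then handled by tensoring the Dirac triangle with $A$ over $\underline{E}G$, observing that the resulting triangle splits because $p^*(\eta)=0$, and exhibiting $A$ as a direct summand of $P\otimes_{G^0}A\in\CI$; thickness of $\CI$ finishes the argument. You should restructure the proof around the Ind--Res adjunction and this tensor-and-retract step.
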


The previous result implies that, for any $A\in\KKK^G$, there is an exact triangle, functorial in $A$ and unique up to isomorphism, such that $P(A)\in\mathcal{P}$ and $N(A)\in \mathcal{N}$,
\[
\Sigma N(A)\longrightarrow P(A) \longrightarrow A \longrightarrow N(A).
\]
Following \cite{meyer:tri}, the object $P(A)$ is called the \emph{cellular approximation} of $A$. We should point out that if $P(C_0(G^0))$ is a \emph{proper} $G$-$C^*$-algebra, then any $A\in \mathcal{P}$ is $\KKK^G$-equivalent to a proper $C^*$-algebra (see Remark \ref{rem:propsub}).

The next result gives a more familiar presentation of the localization $\KKK^G/\mathcal{N}$, and expresses the ordinary Baum--Connes conjecture in terms of the natural morphism $D_A\colon P(A)\to A$ introduced above. We can view this theorem as a bridge between the somewhat abstract notions arising via the triangulated category approach and more classical objects, such as the $\RKK$-group and the ``topological'' $K$-theory group appearing at the left-hand side of the Baum--Connes conjecture.

\begin{theoremA}\label{thm:b}
Let $p\colon \E G\to G^{0}$ be the structure map of the $G$-action. The pullback functor descends to an isomorphism of categories $p^*\colon \KKK^G/\mathcal{N}\to \RKK(\E G)$. The induced map $(D_A\rtimes_r G)_*\colon K_*(P(A)\rtimes_r G) \to K_*(A\rtimes_r G)$ corresponds to the assembly map under the natural identification $K_*^{\mathrm{top}}(G;A)\cong K_*(P(A)\rtimes_r G)$.
\end{theoremA}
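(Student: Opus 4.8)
The plan is to prove the two assertions separately, since the first is a statement about the localization of $\KKK^G$ at $\Nn$ while the second identifies the resulting assembly map. For the first assertion I would start from the general fact (already recalled in the excerpt, cf.\ Theorem~\ref{thm:rkkloc}) that the pullback functor $p^*\colon \KKK^G\to \RKK^G(\E G)\cong \RKK(\E G)$ kills precisely the objects $A$ with $p^*(A)\cong 0$, i.e.\ the objects which become contractible over $G\ltimes\E G$. By Theorem~\ref{thm:a} together with Lemma~\ref{lem:contractible objects}, this kernel is exactly $\Nn$: an object is $\KKK^G$-zero after pulling back to $\E G$ if and only if its restriction to every proper open subgroupoid vanishes. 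Hence $p^*$ factors through an exact functor $\bar p^*\colon \KKK^G/\Nn\to \RKK(\E G)$. To see that $\bar p^*$ is an \emph{isomorphism} of categories (not merely fully faithful), I would verify essential surjectivity: every object of $\RKK(\E G)$ is, up to $\RKK$-equivalence, of the form $p^*(A)$ for some $A\in\KKK^G$ — this is where the second-countability and the existence of a locally finite model for $\E G$ enter, allowing one to realise $\RKK(\E G)$-objects by genuine $G$-$C^*$-algebras pulled back along $p$. Fullness and faithfulness on morphisms then follow from the universal property of the localization together with the identification of $\KKK^G/\Nn$-morphisms with $\RKK^G(\E G;-,-)$-groups.

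For the second assertion, the key is to track the morphism $D_A\colon P(A)\to A$ through the crossed-product functor and compare with the classical assembly map. I would recall that the topological $K$-theory $K_*^{\mathrm{top}}(G;A)$ is \emph{defined} (following Baum--Connes and Tu) as a colimit of $\KKK$-groups over the proper/cocompact subspaces of $\E G$, and that this colimit is computed by the cellular approximation: the universal property of $(P(A)\to A)$ as the $\P$-cellular approximation, combined with the fact that $\P$ is generated by compactly induced algebras, identifies $K_*(P(A)\rtimes_r G)$ with precisely that colimit. Concretely, $P(A)$ can be written as a homotopy colimit of compactly induced objects $\Ind_H^G(B)$, and applying $-\rtimes_r G$ together with the induction-in-stages / Green imprimitivity isomorphism $\Ind_H^G(B)\rtimes_r G\sim B\rtimes_r H$ reproduces the simplicial model for $K_*^{\mathrm{top}}(G;A)$ used in the definition of the assembly map. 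Naturality of $D_A$ then ensures that $(D_A\rtimes_r G)_*$ is exactly the map induced by $P(A)\to A$ on $K$-theory, which under the above identification is the Baum--Connes assembly map $\mu_A$.

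The main obstacle I expect is the essential surjectivity of $\bar p^*$ and, relatedly, the precise comparison of the homotopy-colimit description of $P(A)$ with the classical inductive system defining $K_*^{\mathrm{top}}(G;A)$. For groups this is \cite[Section 4]{nestmeyer:loc}, but for groupoids one must be careful that the compactly induced objects really do exhaust a cofinal system of "proper cocompact pieces" of $\E G$ — this relies on the slice theorem (Proposition~\ref{prop:prodr}) to guarantee that proper cocompact $G$-spaces are built from action groupoids of finite groups inside the isotropy, and on a cofinality argument to match the two colimits. A secondary technical point is checking that the various imprimitivity and induction-in-stages isomorphisms are natural enough to be compatible with the connecting maps on both sides; this is routine but needs the explicit bimodule descriptions. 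Once these compatibilities are in place, the identification of $(D_A\rtimes_r G)_*$ with the assembly map is forced by the uniqueness part of Theorem~\ref{thm:a}.
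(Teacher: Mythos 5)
There is a genuine gap, concentrated in the first assertion. In this paper $\RKK^G(\E G)$ is \emph{defined} to have the same objects as $\KKK^G$, with $\Hom(A,B)=\KKK^{G\ltimes\E G}(p^*A,p^*B)$; so your worry about essential surjectivity of $\bar p^*$ is a non-issue (the functor is the identity on objects), while the step you dispose of in one sentence --- ``fullness and faithfulness then follow from the universal property of the localization together with the identification of $\KKK^G/\Nn$-morphisms with $\RKK^G(\E G;-,-)$-groups'' --- is circular: that identification \emph{is} the theorem. Moreover, you invoke Theorem~\ref{thm:rkkloc} as an ``already recalled general fact,'' but Theorem~\ref{thm:rkkloc} is precisely the body-text version of the first assertion of Theorem~B, so it cannot be used as input. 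The actual content the paper supplies here is twofold: (i) $\Nn\subseteq\ker(p^*)$ because for weakly contractible $A$ the pullback $p^*A$ is simultaneously weakly contractible and proper, hence zero by complementarity; and (ii) the map $p^*\colon\KKK^G(P(A),B)\to\RKK^G(\E G;P(A),B)$ is an isomorphism. Step (ii) is proved by observing that the class of first arguments for which $p^*$ is invertible is localizing, reducing via Theorem~\ref{thm:bccc} to $P(A)=\Ind_H^G(D)$ for a compact action $H$, commuting $p^*$ past $\Ind_H^G$ using the induction--restriction adjunction, and then invoking the Dirac--dual-Dirac machinery of \cite{meyereme:dualities} for the proper groupoid $H$ acting on the slice $U^\rho\subseteq\E G$. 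None of this reduction appears in your plan, and it is the heart of the proof.

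For the second assertion your route (write $P(A)$ as a homotopy colimit of compactly induced objects, apply $-\rtimes_r G$ and imprimitivity, and match the resulting colimit with the inductive system defining $K_*^{\mathrm{top}}(G;A)$ by a cofinality argument) is a legitimate alternative in spirit, but you flag the cofinality comparison as an ``obstacle'' without resolving it, and that comparison is exactly the assertion that $\mu^G_{P(A)}$ is an isomorphism. The paper does not re-derive this: it deduces functoriality of $K_*^{\mathrm{top}}(G;-)$ for $\RKK^G(\E G)$-morphisms from the already-established Theorem~\ref{thm:rkkloc}, concludes that the left vertical map in diagram~\eqref{eq:bcmap} is an isomorphism because $p^*(D_A)$ is invertible, and then cites the known validity of the Baum--Connes conjecture for compactly induced (proper) coefficients \cite{ce:perma,ceo:shapiro} to see that $\mu^G_{P(A)}$ is an isomorphism. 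Without either that external input or a completed cofinality argument, your identification of $(D_A\rtimes_r G)_*$ with the assembly map is not established.
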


The paper is organized as follows. In Section \ref{sec:prelims} we lay out the fundamental definitions and conventions which we use throughout the paper. We define groupoid crossed products, pass on to discussing the triangulated structure of the equivariant $\KKK$- and $E$-categories, and finish with some basic results on complementary subcategories and homotopy direct limits. Section \ref{sec:indres} is entirely dedicated to the main technical result of the paper, i.e., an adjunction between the functors $\Ind_H^G\colon \KKK^H\rightleftarrows  \KKK^G\colon \Res_G^H$.

This adjoint situation is the technical foundation for the main results of the paper. Its proof is fairly complicated in terms of bookkeeping of variables, but it does not require particularly new conceptual ideas. In fact, the definition for unit and counit are very intuitive in terms of the open inclusion $H\subseteq G$. The model for the induction functor is perhaps a minor point of novelty, as it is based on the crossed product construction rather than on (generalized) fixed-point algebras. This is especially useful as an open subgroupoid $H\subseteq G$ need not act on $G$ properly (see Remark \ref{rem:indmodel}).

Section \ref{sec:bc} is entirely dedicated to proving Theorem \ref{thm:a} and \ref{thm:b} above, along with some other auxiliary results. The excisive properties of $E$-theory are used in this section.

Section \ref{sec:app} discusses several applications of the main results of the paper. In particular, we give the precise statements and proofs of the results mentioned in Theorem \ref{thm:0}.

\addtocontents{toc}{\SkipTocEntry}\section*{Acknowledgements}

We would like to thank R. Meyer, R. Nest, and M. Yamashita for many helpful suggestions. We are also grateful to A. Miller and S. Nishikawa for pointing out errors in a previous version of this manuscript.

The first author was supported by the Alexander von Humboldt Foundation.

The second author was supported by: Science and Technology Commission of Shanghai Municipality (grant No.~18dz2271000), Foreign Young Talents' grant (National Natural Science Foundation of China), CREST Grant Number JPMJCR19T2 (Japan), Marie Skłodowska-Curie Individual Fellowship (project number 101063362).

\section{Preliminaries}\label{sec:prelims}

Let $G$ be a second countable, locally compact, Hausdorff groupoid with unit space $G^{0}$. We let $s,r\colon G\to G^0$ denote respectively the source and range maps. In addition, we use the notation $G_x=s^{-1}(x)$, $G^x=r^{-1}(x)$, and for a subset $A \subset G^{0}$, we write $G_A = \bigcup_{x \in A} G_x$, $G^A = \bigcup_{x \in A} G^x$, and $G|_A = G^A \cap G_A$. Throughout this paper we assume the existence of a (left) Haar system $\{\lambda^x\}_{x\in G^0}$ on $G$ \cite{ren:group}. 

Let $X$ be second countable, locally compact, Hausdorff space. A \emph{$C_0(X)$-algebra} is a $C^*$-algebra $A$ endowed with a nondegenerate $*$-homomorphism from $C_0(X)$ to the center of the multiplier algebra $\mathcal{M}(A)$. For an open set $U \subset X$, we define $A_U = C_0(U) A $. For a locally closed subset $Y \subset X$ (i.e., $Y = U \smallsetminus V$ for some open sets $U, V \subset X$), we set $A_Y = A_U / A_{U \cap V}$, and we put $A_x = A_{\{x\}} = A / A C_0(X \setminus \{x\})$ for $x \in X$. More on $C_0(X)$-algebras can be found in \cite{bla:defhopf}.

Let us fix our preliminary conventions on tensor products. A more in-depth discussion is provided after Definition \ref{def:eth}. If $A$ and $B$ are $C_0(X)$-algebras, their \emph{maximal} tensor product $A\otimes B$ is naturally equipped with a $C_0(X\times X)$-structure, and we define the (maximal) balanced tensor product $A\otimes_X B$ as the $C_0(X)$-algebra $(A\otimes B)_{\Delta_X}$, where $\Delta_X\subseteq X\times X$ is the diagonal subspace.

Note that if $f \colon Y \to X$ is a continuous map, then $C_0(Y)$ is a $C_0(X)$-algebra. It is  a continuous field if and only if $f$ is open \cite{blakirch:glimm}. In particular this applies to the situation $Y = G$ and $f = s$, because the source and range maps are open when a Haar system exists \cite[Proposition 2.4]{ren:group}. The map $f$ defines a ``forgetful'' functor, sending a $C_0(Y)$-algebra $A$ to a $C_0(X)$-algebra $f_*(A)$, by way of the composition $C_0(X)\to \mathcal{M}(C_0(Y))\to Z\mathcal{M}(A)$. In addition, for a $C_0(X)$-algebra $B$, a continuous function like $f$ above also induces a \emph{pullback} functor $f^*B = C_0(Y) \otimes_{X} B$ from the category of $C_0(X)$-algebras to that of $C_0(Y)$-algebras. 

We are ready to define the notion of groupoid action on $C^*$-algebras.
\begin{defi}
Let $G$ be a second countable locally compact Hausdorff groupoid, and put $G^{0} = X$.
A \emph{continuous action} of $G$ on a $C_0(X)$-algebra $A$ (with structure map $\rho$) is given by an isomorphism of $C_0(G)$-algebras
\[
\alpha\colon C_0(G) \tens{s}{X}{\rho} A \to C_0(G) \tens{r}{X}{\rho} A
\]
such that the induced homomorphisms $\alpha_g \colon A_{s(g)} \to A_{r(g)}$ for $g \in G$ satisfy $\alpha_{g h} = \alpha_g \alpha_h$.
In this case, we say that $A$ is a $G$-$C^*$-\emph{algebra}.
\end{defi}

If $A$ is a commutative $C^*$-algebra, say $A\cong C_0(Z)$, then we view the moment map as a continuous function $\rho\colon Z\to X$. In this case, the action $\alpha$ can be given as a continuous map making the following diagram commute,
\[
\xymatrix{G\tensor[_s]{\times}{_\rho}Z  \ar[dr]^-{r} \ar[r]^-{\alpha} & Z \ar[d]^-{\rho}\\
& X}
\]
(above, we are slightly abusing notation by writing $r$ for the map $(g,z)\mapsto r(g)$). The action groupoid obtained this way will be denoted $G\ltimes Z$, it has unit space $Z$ and its generic arrow is determined by a pair $(g,z)\in G\times Z$ with range $z$ and source $\alpha(g^{-1},z)$.

Details on the construction of groupoid crossed product $C^*$-algebras can be found in \cite{skan:crossinv,damu:renequiv}. We are going to only briefly recap the definitions here. Given a $G$-algebra $A$, define the auxiliary algebra $A_0=C_c(G)\cdot r^*A$ and the $\ast$-algebra structure
\begin{align*}
(f\star g)(\gamma)&=\int f(\eta)\alpha_\eta(g(\eta^{-1}\gamma))\,d\lambda^{r(\gamma)}(\eta)\\
f^*(\gamma)&=\alpha_\gamma(f(\gamma^{-1})^*)
\end{align*}
for $f,g\in A_0$. For $f\in A_0$, we also define $\lVert f \rVert_1$ to be the supremum, over $x\in X$, of the quantity $\max\{\int \lVert f(\gamma) \rVert\,d\lambda_x(\gamma), \int\lVert f(\gamma) \rVert\,d\lambda^x(\gamma)\}$, where $\lambda_x(\gamma)=\lambda^x(\gamma^{-1})$. The enveloping $C^*$-algebra of the Banach $\ast$-algebra obtained by completing $A_0$ with respect to $\lVert \cdot \rVert_1$ is called the \emph{full} crossed product of $A$ by $G$. 

In this paper, unless otherwise stated, we are going to consider the \emph{reduced} crossed product $C^*$-algebra of $A$ by $G$, denoted $A\rtimes_r G$ (at times we might drop the subscript ``$r$''), which is obtained as a quotient of the full crossed product as follows. For $x\in X$, consider the $A_x$-Hilbert module $L^2(G^x,\lambda^x)\otimes A_x$. The formula $\Lambda_x(f)g=f\star g$ defines an adjointable operator and extends to a $\ast$-representation of the full crossed product.
\begin{defi}
The \emph{reduced} crossed product $A\rtimes_r G$ is defined as the quotient of the full crossed product by the joint kernel of the family $(\Lambda_x)_{x\in X}$ of representations.
\end{defi}

Let us consider the $G$-equivariant Kasparov category $\KKK^G$ whose objects are separable and trivially graded $C^*$-algebras equipped with an action of $G$ and whose set of morphisms $A\rightarrow B$ is Le Gall's groupoid equivariant Kasparov group $\KKK^G(A,B)$ (see \cite{gall:kk}); the composition in this category is the Kasparov product.  We can view $\KKK^G$ as a functor from the category of (separable) $G$-$C^*$-algebras sending equivariant $*$-homomorphisms $A\to B$ to their respective class in the abelian group $\KKK^G(A,B)$. When viewed in this way, the functor $\KKK^G$ enjoys an important property: it is the universal split-exact, $C^*$-stable, and homotopy invariant functor (see \cite{meyer:genhom,valmak:groupoid,thomsen:univ} for more details).

Given a $G$-action on a space $Z$ with moment map $p_Z\colon Z \to G^{0}$, we have introduced above the pullback functor $p^*_Z$ sending $G$-$C^*$-algebras to $G\ltimes Z$-$C^*$-algebras. Thanks to the universal property discussed above, we can promote this functor to a functor between equivariant Kasparov categories $p^*_{Z}\colon \KKK^G\to \KKK^{G\ltimes Z}$. This will be particularly useful when we take $Z$ to be a model for the classifying space for proper actions of $G$ (and in this case we may use the notation $Z=\E G$) \cite[Proposition 6.15]{tu:novikov}.

Moreover, given a map $f\colon G\to G\ltimes Z$, the universal property ensures $f_*$ yields well-defined functor between the corresponding $\KKK$-categories. Furthermore, when $f\colon X\to Z$ is \emph{proper}, we have a standard adjunction (see \cite{nestmeyer:loc})
\begin{equation}\label{eq:stdadj}
\KKK^{G\ltimes X}(f^*A,B)\cong  \KKK^{G\ltimes Z}(A,f_*B).
\end{equation}

Finally, let us define the category $\RKK(Z)$ as follows. 

\begin{defi}
The category $\RKK^G(Z)$ has the same objects as $\KKK^G$, and its $\Hom$-sets $\Hom(A,B)$ are given by the abelian groups $\KKK^{G\ltimes Z}(p_Z^*A,p_Z^*B)$.
\end{defi}

For a map $f$ as above (not necessarily proper), the functor $f^*\colon \KKK^{G\ltimes Z}\to \KKK^{G\ltimes X}$ induces natural maps (slightly abusing notation)
\[
f^*\colon \RKK^G(Z;A,B)\to \RKK^G(X;A,B)
\]
whenever the factorization $p_Z\circ f = p_Y$ holds. In this sense, for fixed $A$ and $B$, $\RKK^G$ is a contravariant functor. It is also homotopy invariant, that is, $f_1^*=f_2^*$ if the maps $f_1, f_2$ are $G$-homotopic. In order to see this, note that we have an isomorphism 
\begin{equation}\label{eq:homiso}
\RKK^G(Y\times [0,1]; A,B)\simeq \RKK^G(Y; A,B[0,1])
\end{equation}
induced by Equation \eqref{eq:stdadj}, hence the claim follows from the homotopy invariance of $\KKK^G(A,B)$ in the second variable $B$.

\subsection{Triangulated structure and comparison with \texorpdfstring{$E$}{E}-theory}

Let us start by fixing some standard conventions. For a $C^*$-algebra $A$, we have a suspension functor $\Sigma A$ defined as $\Sigma A =C_0(\mathbb{R})\otimes A$. For an equivariant $*$-homomorphism of $G$-C$^*$-algebras $f \colon A \to B$, we define its associated mapping cone by
\[
\Cone(f) = \{(a, b_*) \in A \oplus C_0((0, 1], B) \mid f(a) = b_1 \}.
\]
This inherits a structure of $G$-C$^*$-algebra from $A$ and $B$.

An \emph{exact triangle} in $\KKK^G$ is the data of a diagram of the form
\[
A \to B \to C \to \Sigma A,
\]
and a $*$-homomorphism $f\colon A' \to B'$ of $G$-C$^*$-algebras, together with a commutative diagram
\[
\begin{tikzcd}
A \arrow[r] \arrow[d] & B \arrow[r] \arrow[d] & C \arrow[r] \arrow[d] & \Sigma A  \arrow[d]\\
\Sigma B' \arrow[r] & \Cone(f) \arrow[r] & A' \arrow[r] & B',
\end{tikzcd}
\]
where the vertical arrows are equivalences in $\KKK^G$, and the rightmost downward arrow is equal to the leftmost downward arrow, up to applying $\Sigma$ and the Bott periodicity isomorphism $\Sigma^2 B' \simeq B'$ in $\KKK^G$.

As we see from above, the most natural triangulated structure lives on the opposite category $(\KKK^G)^\text{op}$. The opposite category of a triangulated category inherits a canonical triangulated category structure, which has ``the same'' exact triangles. The passage to opposite categories exchanges suspensions and desuspensions and modifies some sign conventions. Thus the functor $\Sigma$ becomes in principle a \emph{desuspension} functor in $\KKK^G$, but due to Bott periodicity $\Sigma$ and $\Sigma^{-1}$ agree, so that we can safely overlook this fact. Moreover, depending on the definition of triangulated category, one may want the suspension to be an equivalence or an isomorphism of categories. In the latter case $\KKK^G$ should be replaced by an equivalent category (see \cite[Section 2.1]{nestmeyer:loc}). This is not terribly important and will be ignored in the sequel.

The triangulated category axioms are discussed in greater detail in \cite{nee:tri,ver:catder}. Most of them amount to formal properties of mapping cones and mapping cylinders, which can be shown in analogy with classical topology. The fundamental axiom requires that any morphism $A \to B$ should be part of an exact triangle. In our setting this can be proved as a consequence of the generalization of \cite{meyer:genhom} to groupoid-equivariant $\KKK$-theory (see also \cite[Lemma A.3.2]{laff:kkban}). Having done that, the rest of the proof follows the same outline of \cite[Appendix A]{nestmeyer:loc}, where the triangulated structure is established in the case of action groupoids. 

There is an alternative, perhaps more conceptual path which consists in \emph{defining} the Kasparov category as a certain localization of the Spanier-Whitehead category associated to the standard tensor category of $G$-$C^*$-algebras and $*$-homomorphisms \cite{ivo:thesis}. The triangulated structure of the Spanier-Whithead category is proved in \cite[Theorem A.5.3]{ivo:thesis}. The argument given there can be directly used to show that $\KKK^G$ is triangulated, because it makes use of only two facts, which we prove below.

\begin{prop}
Let $C$ be the standard tensor category of separable $G$-$C^*$-algebras (with $\otimes_X$) and $*$-homomorphisms. Denote by $F$ the canonical functor from $C$ to $\KKK^G$. The following hold:
\begin{itemize}
\item up to an isomorphism of morphisms in $\KKK^G$, each morphism of $\KKK^G$ is in the image of $F$;
\item up to an isomorphism of diagrams $Q\to K\to D$ in $\KKK^G$, each composable pair of morphisms of $\KKK^G$ is in the image of $F$.
\end{itemize}
\end{prop}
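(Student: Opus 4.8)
The plan is to exploit the universal property of $\KKK^G$ as the universal split-exact, $C^*$-stable, homotopy invariant functor on the category $C$ of separable $G$-$C^*$-algebras, together with the standard concrete description of $\KKK^G$-morphisms. Recall that any class in $\KKK^G(A,B)$ is represented by a Kasparov triple; after absorbing the grading and using $C^*$-stability, such a class can be written (Cuntz-style picture) as a difference or, better, via the Higson--Cuntz quasi-homomorphism picture, as a pair of $*$-homomorphisms into a stabilisation. More precisely, in the groupoid-equivariant setting (following \cite{meyer:genhom}, and using $C^*$-stability by $\mathcal{K}(\ell^2\mathbb{N})$ together with the fact that $\KKK^G(A,B)\cong \KKK^G(A, B\otimes C_0(\R^2))$ from Bott periodicity) every morphism $\varphi\in\KKK^G(A,B)$ fits into a diagram in $C$ of the form $A \xleftarrow{\;s\;} \tilde A \xrightarrow{\;h\;} B'$ where $s$ is a $\KKK^G$-equivalence (e.g.\ a corner embedding, or a split surjection from a mapping path space, which is a $\KKK^G$-equivalence by split-exactness and homotopy invariance) and $h$ is an honest $*$-homomorphism, and such that $\varphi = F(h)\circ F(s)^{-1}$ in $\KKK^G$. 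This gives the first bullet: $\varphi$ is isomorphic, as a morphism, to $F(h)$ via the invertible vertical maps $F(s)$ and $\mathrm{id}$.

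For the first bullet concretely, I would argue as follows. Given $\varphi\in\KKK^G(A,B)$, pick a Kasparov module $(\mathcal E,\phi,T)$ for it. Standard manipulations (stabilisation, homotopy to make $T$ a self-adjoint contraction, passing to the Cuntz picture) reduce $\varphi$ to a class represented by a pair of equivariant $*$-homomorphisms $\phi_\pm\colon A\to \mathcal{M}(B\otimes\mathcal K)$ agreeing modulo $B\otimes\mathcal K$. Such a pair is the same as a single $*$-homomorphism $A\to D_{\phi_\pm}$ into the associated "difference" algebra together with the canonical $\KKK^G$-equivalence from (a suspension of) $B\otimes\mathcal K$; unwinding, one obtains a zig-zag $A\xleftarrow{s}\tilde A\xrightarrow{h}B'$ in $C$ with $s$ a $\KKK^G$-equivalence, exactly as above. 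The only point requiring care is that all the algebras appearing must remain \emph{separable} and carry genuine $G$-actions, not merely actions by automorphisms of multiplier algebras; this is handled by replacing $\mathcal{M}(B\otimes\mathcal K)$-valued data by the separable $G$-$C^*$-subalgebra it generates together with $B\otimes\mathcal K$, which is the usual mechanism.

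For the second bullet, suppose we are given a composable pair $Q\xrightarrow{\alpha} K\xrightarrow{\beta} D$ in $\KKK^G$. Apply the first bullet to $\beta$ to get $K\xleftarrow{s}\tilde K\xrightarrow{h}D'$ with $s$ invertible in $\KKK^G$ and $\beta=F(h)F(s)^{-1}$. Now $F(s)^{-1}\circ\alpha\in\KKK^G(Q,\tilde K)$ is again a single morphism, to which we apply the first bullet: $Q\xleftarrow{t}\tilde Q\xrightarrow{g}\tilde K$ with $t$ invertible and $F(s)^{-1}\alpha = F(g)F(t)^{-1}$. Then the composable pair $\tilde Q\xrightarrow{g}\tilde K\xrightarrow{h}D'$ in $C$ maps under $F$ to $\bigl(F(h)F(g),\,\ldots\bigr)$, and the triple of invertible maps $F(t)\colon \tilde Q\to Q$, $\mathrm{id}\colon \tilde K\to \tilde K$ (precomposed with nothing, since $g$ already lands in $\tilde K$), $\mathrm{id}\colon D'\to D'$ exhibits an isomorphism of diagrams $(\tilde Q\to\tilde K\to D')\cong(Q\to K\to D)$ in $\KKK^G$: indeed $F(g)F(t)^{-1}=F(s)^{-1}\alpha$ so $F(s)F(g)F(t)^{-1}=\alpha$, i.e.\ the square with $F(s)$ and $F(t)$ commutes, and $F(h)=\beta F(s)$ matches the second square. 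The main obstacle is purely bookkeeping: ensuring at each application of the first bullet that one stays within separable $G$-$C^*$-algebras and that the replacement algebras $\tilde K,\tilde Q$ can be chosen so that the equivalences $s,t$ are actual $*$-homomorphisms (rather than formal zig-zags), so that "isomorphism of diagrams in $\KKK^G$" is literally realised by the displayed vertical arrows; this is exactly the content already present in \cite{meyer:genhom} and \cite[Appendix A]{ivo:thesis}, to which I would appeal for the technical core.
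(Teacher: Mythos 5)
Your argument is correct in outline, but it takes a genuinely different route from the paper. You realise a class $\varphi\in\KKK^G(A,B)$ as a zig-zag $A \stackrel{s}{\longleftarrow} \tilde A \stackrel{h}{\longrightarrow} B'$ via the equivariant Cuntz/quasi-homomorphism picture, with $s$ a $*$-homomorphic $\KKK^G$-equivalence, and then iterate for composable pairs. The paper instead passes to the degree-shifted class $\tilde f\in\KKK^G(\Sigma Q,K)$, represents it by an equivariant semi-split extension $K\hookrightarrow E\twoheadrightarrow\Sigma Q$ (using Lafforgue's lemma to make the operator $G$-equivariant), and reads off $f$ as the honest $*$-homomorphism $\iota_f\colon\Sigma^2Q\to\Cone(p_f)$, up to the Bott isomorphism $\beta_Q$ and the comparison equivalence $\epsilon_K\colon K\to\Cone(p_f)$; the composable case is handled by repeating this with $h=g\circ\epsilon_K^{-1}$. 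Both routes rest on the same technical input (equivariant compression of the operator plus the universal property of $\KKK^G$), but the extension-triangle version buys two things for free: it simultaneously exhibits every morphism as part of an exact triangle, which is the other axiom the paper needs, and it matches the subsequent presentation of $\KKK^G$ and $E_G$ as Verdier quotients by the cones of the maps $\epsilon_K$. Two points in your write-up deserve tightening, though neither is a genuine gap: (i) after the second application of the first bullet, $g$ lands in a stabilisation $\tilde K\otimes\mathcal{K}$ rather than in $\tilde K$ itself, so $h$ must be replaced by $h\otimes 1_{\mathcal{K}}$ and the corner embeddings absorbed into the vertical equivalences; (ii) the middle vertical arrow of your isomorphism of diagrams is $F(s)$ (composed with the inverse of a corner embedding), not $\mathrm{id}_{\tilde K}$, as your own verification $F(s)F(g)F(t)^{-1}=\alpha$ in fact shows.
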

\begin{proof}
In order to show the lifting properties above we make use of ``extension triangles''. Let $f\in \KK[0][G]{Q}{K}$ be a morphism and denote by $\tilde{f}$ the corresponding element $\tilde{f}\in\KK[1][G]{\Sigma  Q}{K}$. By applying \cite[Lemma A.3.4]{laff:kkban} we can represent $\tilde{f}$ by a Kasparov module where the operator $T$ is $G$-equivariant. Then the proof of \cite[Lemma A.3.2]{laff:kkban} gives that $\tilde{f}$ is represented by an equivariant (semi-)split extension which fits a diagram as follows (see \cite[Section 2.3]{nestmeyer:loc}):
\[
\xymatrix{\Sigma^2 Q \ar[r]^-{f\beta_Q^{-1}} \ar@{=}[d] & K \ar[d]^-{\epsilon_K} \ar[r] & E \ar@{=}[d]\ar[r]^-{p_f} & \Sigma Q\ar@{=}[d]\\
\Sigma^2 Q \ar[r]^-{\iota_f} & \Cone(p_f) \ar[r] & E \ar[r]^-{p_f} & \Sigma Q,}
\]
where $\beta_Q$ is the Bott isomorphism and $\epsilon_K$ is an equivalence. Hence we have that $F(\iota_f)\cong f$. Notice how this argument automatically shows that $f$ is contained in an exact triangle (up to equivalence).

Now given $g\in \KK[0][G]{K}{D}$, set $h=g\circ \epsilon_K^{-1},C_f=\Cone(p_f)$ and consider the diagram
\[
\xymatrix{Q \ar[r]^-{f}\ar[d]_-{\beta_Q} & K \ar[d]^-{\epsilon_K} \ar[r]^-{g} & D \ar@{=}[d]\\
\Sigma^2 Q \ar[r]^-{\iota_f} \ar[d]_-{\beta_{\Sigma^2Q}}& C_f \ar[d]^-{\beta_{C_f}} \ar[r]^-{h} & D \ar[d]_-{\epsilon_D}\\
\Sigma^4Q \ar[r]^-{\Sigma^2\iota_f} & \Sigma^2 C_f \ar[r]^-{\iota_h} & C_h.}
\]
This shows that the pair $(f,g)$ can be lifted to a composable pair $(\Sigma^2\iota_f,\iota_h)$.
\end{proof}

\begin{rema}
The proof above depends on the fact that extensions with an equivariant, contractive, completely positive section can be shown to be isomorphic to mapping cone triangles. From an abstract standpoint, we may express this by saying that $\KKK^G$ is the result of the Verdier quotient \cite{hen:triloc,nee:tri} of the  Spanier-Whitehead category of $G$-$C^*$-algebras \cite{ivo:thesis} by the thick tensor ideal of objects $\Cone(\epsilon_K)$, for all canonical comparison maps $\epsilon_K$ associated to equivariant semi-split extensions (to be precise, we need to take into account yet another class of morphisms, to ensure that $\KKK^G$ is a stable functor, see \cite[Section A6.1]{ivo:thesis} and Definition \ref{def:eth} below).
\end{rema}

\begin{defi}\label{def:eth}
Let $\mathrm{SW}(C)$ be the Spanier-Whitehead category of the standard category of $G$-$C^*$-algebras, and let $\mathcal{I}\subseteq\mathrm{SW}(C)$ be the thick tensor ideal generated by the mapping cones of morphisms:
\begin{itemize}
    \item $\epsilon_K$ for any extension $K\hookrightarrow E \twoheadrightarrow Q$ in $C$;
    \item $\mathcal{K}(H_1)\to \mathcal{K}(H_1\oplus H_2)$ for any two nonzero $G$-Hilbert spaces $H_1,H_2$, where the map is induced by the canonical inclusion in the first factor. 
\end{itemize}
The equivariant $E$-theory category is defined as the Verdier quotient $E_G=\mathrm{SW}(C)/\mathcal{I}$.
\end{defi}

It should be clear from the definition above that $E_G$, viewed as functor from the category of separable $G$-$C^*$-algebras is the universal \emph{half-exact}, $C^*$-stable, and homotopy invariant functor. In this sense we can understand $E$-theory as the universal ``correction'' of $\KKK$-theory in terms of excision properties. The universal property implies in particular that any functor between ``concrete'' categories of $C^*$-algebras such as $f_*$ and $f^*$ extends to $E$-theory the same way it does for $\KKK$-theory.

By the same token, for a separable $G$-$C^*$-algebra $B$ we can define a functor $\sigma_B$ which is given by $\sigma_B(A)=A\otimes_X B$ on objects and $\sigma_B(\phi)=\phi\otimes \mathrm{1}_B$ on morphisms. It is important to discuss whether or not $\sigma_B$ is a triangulated functor on our $K$-theory categories $\KKK^G$ and $E_G$. By this we mean whether or not $\sigma_A$ preserves exact triangles. Since we are adopting the convention of using the maximal tensor product, the preservation of exact triangles is a simple consequence of the fact that $-\otimes B$ is an exact functor, and clearly it preserves semi-split extensions.

When $B$ is $C_0(X)$-nuclear, that is a continuous field over $X$ with nuclear fibers \cite{bauval:nuc}, we have an isomorphism $A\otimes_X B\cong (A\otimes^{\text{min}} B)_{\Delta_X}$ \cite{bla:defhopf}. Note that this applies in particular to the pullback functor $f^*$ associated to an open map $f\colon Y\to X$, such as the range and source maps $r,s\colon G\to G^{0}=X$. Thus if $B$ is exact or $C_0(X)$-nuclear the functor $\sigma_B$ is triangulated, regardless of the specific choice of tensor product.

The property of being $C_0(X)$-nuclear, or rather its $K$-theoretic counterpart called \emph{$\KKK^X$-nuclearity}, is important to establish a useful identification between $\KKK$- and $E$-theory groups as follows. More information on $\KKK^X$-nuclearity can be found in \cite{bauval:nuc}, here we limit ourselves to record the following simple fact, which is proved in \cite[Proposition 5.1 \& Corollary 5.2]{tu:moy} (see Definition \ref{def:proper} for proper groupoids).

\begin{prop}
Suppose $G$ is proper. If $A$ is $\KKK^{G^0}$-nuclear, for example $A$ is a continuous field over the unit space of $G$ with nuclear fibers, then the functor $B\mapsto \KKK^G(A,B)$ is half-exact.  
\end{prop}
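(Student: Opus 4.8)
The plan is to reduce the assertion to the half-exactness of $E$-theory, the point being that $\KKK^G$-nuclearity is precisely the condition under which the canonical comparison functor $\KKK^G\to E_G$ is invisible to the algebra $A$. First I would record two inputs from \cite{bauval:nuc} (the arguments there are stated for $C_0(X)$-algebras but transport verbatim to the $G$-equivariant Kasparov category, or can be deduced from the $C_0(X)$-case via the forgetful functor $\KKK^G\to\KKK^X$): that a continuous field over $X$ with nuclear fibers is $\KKK^G$-nuclear, so that the ``for example'' clause is covered; and that $\KKK^G$-nuclearity of $A$ is equivalent to the comparison functor $\KKK^G\to E_G$ inducing a natural isomorphism $\KKK^G(A,B)\cong E_G(A,B)$ for every separable $G$-$C^*$-algebra $B$.

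Next I would check that $E_G(A,-)$ is half-exact for an arbitrary object $A$, which is built into Definition \ref{def:eth}. Given any extension $K\hookrightarrow E\twoheadrightarrow Q$ of $G$-$C^*$-algebras, the mapping cone of the comparison map $\epsilon_K\colon K\to\Cone(E\to Q)$ belongs to the thick tensor ideal $\I$, so $\epsilon_K$ becomes invertible in the Verdier quotient $E_G=\mathrm{SW}(C)/\I$; using this to replace $\Cone(E\to Q)$ by $K$ in the mapping cone triangle of the quotient map, the extension becomes an exact triangle $\Sigma Q\to K\to E\to Q$ in $E_G$. Since $E_G$ is triangulated, the representable functor $E_G(A,-)$ is homological, hence carries this exact triangle to a long exact sequence of abelian groups; in particular $E_G(A,K)\to E_G(A,E)\to E_G(A,Q)$ is exact in the middle.

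Finally I would combine the two steps. For an extension $K\hookrightarrow E\twoheadrightarrow Q$, naturality of the comparison isomorphism produces a commuting ladder identifying the sequence $\KKK^G(A,K)\to\KKK^G(A,E)\to\KKK^G(A,Q)$ with $E_G(A,K)\to E_G(A,E)\to E_G(A,Q)$, which is exact by the previous paragraph; hence $B\mapsto\KKK^G(A,B)$ is half-exact. (In fact the same argument yields the full six-term exact sequence.)

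The only genuine obstacle is the first input: making precise that Bauval's $C_0(X)$-nuclearity theory carries over to $\KKK^G$, i.e.\ that $\KKK^G$-nuclear objects really do see $\KKK^G$ and $E_G$ as the same. Once that comparison isomorphism is in hand everything else is formal, flowing from the triangulated structure of $E_G$ and the definition of the ideal $\I$. If one wished to bypass $E$-theory, the alternative route — again following \cite{bauval:nuc} — would be to show directly that for $\KKK^G$-nuclear $A$ any extension of $G$-$C^*$-algebras becomes completely positively and contractively $G$-equivariantly split after a suitable tensoring, so that it already represents a mapping cone triangle in $\KKK^G$; but routing through $E_G$, which the paper has just constructed, is cleaner.
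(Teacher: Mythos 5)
Your argument is circular with respect to the logical structure of the paper (and of the literature it cites). The key input you want to take from \cite{bauval:nuc} --- that $\KKK^G$-nuclearity of $A$ yields a natural isomorphism $\KKK^G(A,B)\cong E_G(A,B)$ --- is precisely Corollary \ref{coro:kkenuc}, which the paper \emph{deduces from} the proposition you are asked to prove: the map $E_G(A,B)\to\KKK^G(A,B)$ is obtained from the universal property of $E_G$ as the universal \emph{half-exact}, $C^*$-stable, homotopy-invariant functor, so one must already know that $\KKK^G(A,-)$ is half-exact before the comparison isomorphism is available. There is no independent route to that isomorphism to appeal to: the actual content of Bauval's theorem (following Skandalis's notion of $K$-nuclearity) is the half-exactness statement itself, proved by a direct analytic argument --- approximating $\mathrm{id}_A$ by morphisms factoring through nuclear maps so that the six-term exact sequence of $\KKK(A,-)$, a priori valid only for semisplit extensions, extends to arbitrary extensions. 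Your middle paragraph (half-exactness of $E_G(A,-)$ for arbitrary $A$, from the definition of the ideal $\I$ and the triangulated structure of the Verdier quotient) is correct but does not help, since the bridge back to $\KKK^G$ is exactly the missing step.

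The ``alternative route'' you sketch at the end is much closer to the genuine argument, but as stated it is not correct either: it is not true that every extension becomes equivariantly c.p.c.\ split after tensoring with a nuclear object; rather, the nuclearity hypothesis is used to replace the identity Kasparov element of $A$ by a limit of elements that are compatible with the connecting maps of an arbitrary extension. Note finally that the paper itself offers no proof of this proposition --- it is cited verbatim from \cite{bauval:nuc} --- so the honest answer here is either to cite that reference (checking that the $C_0(X)$-equivariant argument transports to the groupoid-equivariant setting, which is a nontrivial but routine verification) or to reproduce the Skandalis--Bauval approximation argument; it cannot be shortcut through $E$-theory.
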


Having this, the following is a simple consequence of the universal properties.

\begin{coro}[{\cite{patr:repEth}}]\label{coro:kkenuc}
If $G$ is proper and $A$ is a $\KKK^G$-nuclear $C^*$-algebra, there is a natural isomorphism $\KKK^G(A,B)\cong E_G(A,B)$ for any separable $G$-$C^*$-algebra $B$.
\end{coro}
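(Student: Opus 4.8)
The plan is to identify $E_G$ as a Verdier quotient of $\KKK^G$ and to show that every $\KKK^G$-nuclear object lies in the left orthogonal of the subcategory being killed by this quotient. By Definition~\ref{def:eth} together with the remark preceding it, $\KKK^G$ is itself the Verdier quotient of $\mathrm{SW}(C)$ by a thick tensor ideal contained in $\mathcal{I}$; consequently $E_G$ is the Verdier quotient of $\KKK^G$ by the thick tensor ideal $\mathcal{I}'\subseteq\KKK^G$ generated by the mapping cones $\Cone(\epsilon_K)$ of the canonical comparison maps $\epsilon_K\colon K\to\Cone(p)$ attached to all (not necessarily semi-split) extensions $0\to K\to E\xrightarrow{p}Q\to 0$ of separable $G$-$C^*$-algebras, the semi-split ones being already zero in $\KKK^G$. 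Under this description the canonical functor $\KKK^G\to E_G$ is the localisation, and the map $\KKK^G(A,B)\to E_G(A,B)$ appearing in the statement is the one it induces. A standard property of Verdier quotients then guarantees that this map is an isomorphism for every $B$ as soon as $A$ lies in ${}^\perp\mathcal{I}'$, i.e.\ $\KKK^G(A,\Sigma^n X)=0$ for all $X\in\mathcal{I}'$ and all $n\in\Z$. So the whole statement reduces to this orthogonality.

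Set $F:=\KKK^G_*(A,-)$. Since $F$ is homological, $\{X:F(X)=0\}$ is a thick subcategory, and because $-\otimes_X D$ is exact on $C^*$-algebras (see the discussion before Definition~\ref{def:eth}), tensoring an extension with $D$ is again an extension whose comparison map is $\epsilon_K\otimes_X\mathrm{id}_D$, so that $\Cone(\epsilon_K)\otimes_X D$ is itself of the form $\Cone(\epsilon_{K'})$; hence it is enough to prove $F(\Cone(\epsilon_K))=0$ for every extension $0\to K\xrightarrow{i}E\xrightarrow{p}Q\to 0$. Applying the homological functor $F$ to the mapping cone triangle $\Sigma\Cone(p)\to\Cone(\epsilon_K)\to K\xrightarrow{\epsilon_K}\Cone(p)$ of the $*$-homomorphism $\epsilon_K$ shows that $F(\Cone(\epsilon_K))=0$ is equivalent to $(\epsilon_K)_*\colon F(K)\to F(\Cone(p))$ being an isomorphism. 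This is the point where $\KKK^G$-nuclearity enters: by the preceding Proposition $F$ is half-exact, and together with its $C^*$-stability and homotopy invariance this yields natural long exact sequences for arbitrary $G$-equivariant $C^*$-extensions. Now compare the extension $0\to K\to E\xrightarrow{p}Q\to 0$ with the mapping cylinder extension $0\to\Cone(p)\to\mathrm{Cyl}(p)\to Q\to 0$ via the morphism of extensions given by $\epsilon_K$ on the ideals, the canonical homotopy equivalence $E\hookrightarrow\mathrm{Cyl}(p)$ in the middle, and $\mathrm{id}_Q$ on the quotients. Applying $F$ produces a morphism of long exact sequences whose middle and right columns are isomorphisms (homotopy invariance of $F$, respectively the identity), so the five lemma forces $(\epsilon_K)_*$ to be an isomorphism. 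Hence $F(\Cone(\epsilon_K))=0$, and likewise $F$ vanishes on every $\Cone(\epsilon_K)\otimes_X D$; therefore $\mathcal{I}'\subseteq\{X:F(X)=0\}$, i.e.\ $A\in{}^\perp\mathcal{I}'$, as required.

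I expect the main obstacle to be the second paragraph: writing down the comparison map $\epsilon_K$ and the cylinder extension precisely enough that the morphism of short exact sequences is manifestly well defined, and invoking the classical — but somewhat technical — fact that a half-exact, $C^*$-stable, homotopy-invariant functor produces Puppe long exact sequences that are natural with respect to morphisms of extensions, so that the five lemma genuinely applies. The Verdier-quotient bookkeeping in the first paragraph, and the reduction of the tensor-ideal closure to the generating family $\{\Cone(\epsilon_K)\}$, are routine but should be spelled out carefully, in particular the identification of $\mathcal{I}'$ and the fact that the map in the statement is exactly the localisation functor.
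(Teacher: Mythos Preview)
Your argument is correct, but the paper's proof is a two--line application of the universal properties and avoids all of the Verdier--quotient bookkeeping. The paper simply observes that the canonical functor $\KKK^G\to E_G$ gives a natural map $\KKK^G(A,B)\to E_G(A,B)$; conversely, since $\KKK^G(A,-)$ is half-exact (the preceding Proposition), $C^*$-stable, and homotopy invariant, the universal property of $E_G$ produces a natural map $E_G(A,B)\to\KKK^G(A,B)$; the two composites are then forced to be the identity by the uniqueness clause in each universal property. Your route---realising $E_G$ as $\KKK^G/\mathcal{I}'$, showing that $A\in{}^\perp\mathcal{I}'$ by the mapping-cylinder comparison and the five lemma, and invoking the standard orthogonality criterion for Verdier quotients---unpacks what is hidden inside that uniqueness argument. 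Both proofs rest on the same input (half-exactness of $\KKK^G(A,-)$), but the paper's version makes clear that nothing beyond the defining universal properties is needed, while yours has the merit of pinpointing exactly which objects (the $\Cone(\epsilon_K)$ and their tensor translates) obstruct the comparison map for a general $A$; this could be useful if one later wants to analyse the failure of the isomorphism in the non-nuclear case.
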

\begin{proof}
Denote by $F$ the standard $\KKK$-functor from the category of separable $C^*$-algebras. The universal property of $\KKK$-theory gives us a map $\Phi_{C,B}\colon\KKK^G(C,B)\to E_G(C,B)$. Let $F^\prime$ be the functor (from separable $C^*$-algebras) given by $F^\prime(B)=\KKK^G(A,B)$ and $F^\prime(f\colon C\to B)$ induced by Kasparov product with $F(f)$. Since $\KKK^G(A,-)$ is half-exact, the universal property of $E$-theory yields a map $\Psi_{C,B}\colon \KKK^G(A,C)\times E_G(C,B)\to \KKK^G(A,B)$. It is clear that $\Psi(-\,,\Phi \circ F)=F^\prime$. In particular, for $f\colon A\to B$, we have
\[
\Psi_{A,B}(1_A, \Phi_{A,B}F(f))=F^\prime(f)(1_A)=F(f),
 \]
 which implies that $\Psi_{A,B}(1_A,-)$ is a left inverse for $\Phi_{A,B}$. The argument for showing it is a right inverse is analogous.
\end{proof}

\subsection{Complementary subcategories and cellular approximation}\label{subsec:compcell}

In this subsection we recall some facts about complementary subcategories, homotopy colimits in triangulated categories, and the fundamental notion of \emph{cellular approximation}. The material in this section is summarized from \cite{mey:catasp,meyer:tri,nestmeyer:loc,meyernest:tri}.

Let $F\colon \T \to \S$ be an exact functor between triangulated categories. This means that $F$ intertwines suspensions and preserves exact triangles. The kernel of $F$ (on morphisms), denoted $\I=\ker F$, will be called a \emph{homological ideal} (see \cite[Remark 19]{meyernest:tri}). We say that $\I$ is \emph{compatible with direct sums} if $F$ commutes with countable direct sums (see \cite[Proposition 3.14]{meyer:tri}). Note that triangulated categories involving $\KKK$-theory have no more than countable direct sums, because separability assumptions are needed for certain analytical results in the background.

An object $P\in \T$ is called \emph{$\I$-projective} if $\I(P,A)=0$ for all objects $A\in \T$. An object $N\in \T$ is called \emph{$\I$-contractible} if $\mathrm{id}_N$ belongs to $\I(N,N)$. Reference to $\I$ is often omitted in the sequel. Let $P_\I, N_\I \subseteq\T$ be the full subcategories of projective and contractible objects, respectively. 

We denote by $\langle {P_\I}\rangle$ the \emph{localizing} subcategory generated by the projective objects, i.e., the smallest triangulated subcategory that is closed under countable direct sums and contains $P_\I$. In particular, $\langle P_\I\rangle$ is closed under isomorphisms, suspensions, and if
\[
\xymatrix{A \ar[r] & B \ar[r] & C \ar[r] & \Sigma A}
\]
is an exact triangle in $\T$ where any two of the objects $A,B,C$ are in $\langle P_\I\rangle$, so is the third. Note that $N_\I$ is localizing, and any localizing subcategory is \emph{thick}, that is, closed under direct summands (see \cite{nee:tri}). 

\begin{defi}
Given an object $A\in \T$ and a chain complex
\begin{equation}\label{eq:projres}
\xymatrix{\cdots \ar[r]^-{\delta_{n+1}} & P_n \ar[r]^-{\delta_n} & \cdots \ar[r]^-{\delta_1} & P_0 \ar[r]^-{\delta_0} & A}
\end{equation} 
we say that \eqref{eq:projres} is a \emph{projective resolution} of $A$ if
\begin{itemize}
\item all the $P_n$'s are projective;
\item the chain complex below is split exact
\[\xymatrixcolsep{3.2pc}
\xymatrix{F(P_\bullet)\ar[r]^-{F(\delta_0)} & F(A)  \ar[r] & 0.}
\]
\end{itemize}
\end{defi}
We say that $\T$ has \emph{enough projectives} if any object admits a projective resolution.

\begin{prop}[{\cite[Proposition 44]{meyernest:tri}}]
The construction of projective resolutions yields a functor $\T \to \mathrm{Ho}(\T)$. In particular, two projective resolutions of the same object are chain homotopy equivalent.
\end{prop}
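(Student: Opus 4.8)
\textit{Plan.} The plan is to prove the result via the comparison theorem for projective resolutions, carried over from classical relative homological algebra to the triangulated setting. The cornerstone is the following lifting property, which I would record first: if $P$ is $\I$-projective and $\beta\colon B\to C$ is a morphism in $\T$ such that $F(\beta)$ is an epimorphism in $\S$ (which holds in particular when $F(\beta)$ is a split epimorphism, as it is for the augmentation of a projective resolution), then $\beta_*\colon \T(P,B)\to\T(P,C)$ is surjective. To see this, embed $\beta$ in an exact triangle $B'\to B\xrightarrow{\beta}C\xrightarrow{\partial}\Sigma B'$; applying the cohomological functor $\T(P,-)$ gives an exact sequence, so a morphism $\varphi\colon P\to C$ lifts through $\beta$ precisely when $\partial\circ\varphi=0$. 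Applying $F$ to the triangle and using that $F(\beta)$ is epi forces $F(\partial)=0$, i.e.\ $\partial\in\I(C,\Sigma B')$; since $\I=\ker F$ is a two-sided ideal (functoriality of $F$) and $P$ is $\I$-projective, $\partial\circ\varphi\in\I(P,\Sigma B')=0$, so the lift exists.

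Next I would establish the comparison theorem: given a projective resolution $P_\bullet\xrightarrow{\delta_0}A$ (we use only that the $P_n$ are projective) and any chain complex $Q_\bullet$ over $\T$ augmented over $A'$ with $F(Q_\bullet)\to F(A')\to 0$ exact, every morphism $f\colon A\to A'$ lifts to a chain map $\tilde f\colon P_\bullet\to Q_\bullet$ over $f$, and any two such lifts are chain homotopic. Existence is by induction on the degree. At stage $0$ one lifts $f\circ\delta_0\colon P_0\to A'$ along the augmentation $Q_0\to A'$ using the lemma. At stage $n+1$, the already-built $\tilde f_n$ gives $g:=\tilde f_n\circ\delta^P_{n+1}\colon P_{n+1}\to Q_n$ with $\delta^Q_n\circ g=\tilde f_{n-1}\circ\delta^P_n\circ\delta^P_{n+1}=0$; hence, after applying $F$ and invoking exactness of $F(Q_\bullet)$, the class $F(g)$ lies in the image of $F(\delta^Q_{n+1})$. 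Completing $\delta^Q_{n+1}\colon Q_{n+1}\to Q_n$ to an exact triangle $N\to Q_{n+1}\to Q_n\to\Sigma N$ and arguing verbatim as in the lemma, the obstruction to lifting $g$ along $\delta^Q_{n+1}$ — a class in $\T(P_{n+1},\Sigma N)$ — vanishes because $P_{n+1}$ is projective and the obstruction maps to zero under $F$. Uniqueness up to chain homotopy is the same induction applied to the difference of two lifts, building the homotopy $s_n\colon P_n\to Q_{n+1}$ degree by degree with the identical obstruction-vanishing argument.

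With the comparison theorem in hand the proposition is formal. Fixing a projective resolution $P_\bullet(A)$ for each object admitting one, I would define the value of the functor on $f\colon A\to A'$ to be the chain-homotopy class of any lift $P_\bullet(A)\to P_\bullet(A')$ of $f$; this is well defined by the uniqueness clause, and lands in $\mathrm{Ho}(\T)$. Functoriality then follows because a lift of $f$ followed by a lift of $g$ is a lift of $g\circ f$, whence $[g]\circ[f]=[g\circ f]$ by uniqueness, while $\mathrm{id}_{P_\bullet(A)}$ is a lift of $\mathrm{id}_A$. Applying this to $f=\mathrm{id}_A$ for two different resolutions $P_\bullet$ and $P'_\bullet$ of the same object produces chain maps $P_\bullet\to P'_\bullet$ and $P'_\bullet\to P_\bullet$ over $\mathrm{id}_A$ whose composites, being lifts of $\mathrm{id}_A$, are chain homotopic to the identities; this is the ``in particular'' statement.

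\textit{Main obstacle.} Everything outside the inductive step of the comparison theorem is bookkeeping that transcribes the classical arguments, with ``exact sequence of modules'' replaced by ``exact triangle'' and ``diagram chase'' replaced by ``apply $\T(P,-)$ to a triangle''. The one place where the triangulated structure does genuine work is precisely passing from the $F$-level fact ($F(g)$ lies in an image) to an actual morphism in $\T$: one must complete to an exact triangle and use $\I$-projectivity of the source to annihilate the resulting obstruction class. I expect this to be the crux, and it is the reason the hypotheses ``$P_n$ projective'' and ``$F(Q_\bullet)$ exact'' enter exactly where they do.
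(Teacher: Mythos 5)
Your argument is correct and is essentially the standard comparison-theorem proof from the cited source (Meyer--Nest, Proposition 44); the paper itself offers no proof of this statement, only the citation. The one point to watch is that the paper's definition of a projective resolution demands that $F(P_\bullet)\to F(A)\to 0$ be \emph{split} exact (the target $\S$ is merely triangulated, so plain exactness is not available there), and it is precisely this split exactness --- via a contracting homotopy $s$ giving $F(g)=F(\delta^Q_{n+1})\circ\bigl(s\circ F(g)\bigr)$ --- that makes your step ``$F(g)$ lies in the image of $F(\delta^Q_{n+1})$'' literally true; since you only ever apply the comparison theorem with $Q_\bullet$ a projective resolution, this is a harmless rewording rather than a gap.
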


\begin{defi}
We call two thick triangulated subcategories $\P,\Nn$ of $\T$ \emph{complementary} if $\T(P,N)=0$ for all $P\in \P,N\in\Nn$ and, for any $A\in \T$, there is an exact triangle
\[
\xymatrix{ P \ar[r] & A \ar[r] & N \ar[r] & \Sigma P}
\]
where $P\in \P$ and $N\in \Nn$.
\end{defi}

\begin{prop}[{\cite[Proposition 2.9]{nestmeyer:loc}}]\label{prop:csub}
Let $(\P,\Nn)$ be a pair of complementary subcategories of $\T$.
\begin{itemize}
\item We have $N\in \Nn$ if and only if $\T(P,N)=0$ for all $P\in \P$. Analogously, we have $P\in \P$ if and only if $\T(P,N)=0$ for all $N\in \Nn$.
\item The exact triangle $P \to A \to N \to \Sigma P$ with $P\in \P$ and $N\in \Nn$ is uniquely determined up to isomorphism and depends functorially on $A$. In particular, its entries define functors
\begin{align*}
P\colon \T & \to \P & N\colon \T & \to \Nn\\
A &\mapsto P & A &\mapsto N.
\end{align*}
\item The functors $P$ and $N$ are respectively left adjoint to the embedding functor $\P\to \T$ and right adjoint to the embedding functor $\Nn \to \T$.
\item The localizations $\T/\Nn$ and $\T/\P$ exist and the compositions
\begin{align*}
\P &\longrightarrow \T \longrightarrow  \T/\Nn \\
\Nn &\longrightarrow \T \longrightarrow  \T/\P
\end{align*}
are equivalences of triangulated categories (see \cite{hen:triloc} for localization).
\item If $K\colon \T \to \mathcal{C}$ is a covariant functor, then its \emph{localization} with respect to $\Nn$ is defined by $\mathbb{L}K=K\circ P$ and the natural maps $P(A)\to A$ provide a natural transformation $\mathbb{L}K \Rightarrow K$.
\end{itemize}
\end{prop}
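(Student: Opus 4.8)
The plan is to derive every clause from a single elementary observation: once a complementary exact triangle $P\to A\to N\to\Sigma P$ with $P\in\P$ and $N\in\Nn$ is fixed, the cohomological functors $\T(Q,-)$ with $Q\in\P$ and $\T(-,M)$ with $M\in\Nn$ take a very rigid form on it. Indeed, since $\P$ and $\Nn$ are triangulated, hence closed under suspension and desuspension, complementarity gives $\T(Q,\Sigma^{j}N)=0=\T(\Sigma^{j}P,M)$ for all $Q\in\P$, $M\in\Nn$, $j\in\Z$, so the associated long exact sequences degenerate. To begin with the orthogonality characterisation: one implication is the definition of complementarity, and for the converse I would assume $\T(P,N)=0$ for all $P\in\P$ and apply $\T(P',-)$ to a complementary triangle $P'\to N\to N'\to\Sigma P'$ of $N$ itself; the two groups flanking $\T(P',P')$ in the resulting long exact sequence, namely $\T(P',\Sigma^{-1}N')$ and $\T(P',N)$, vanish --- the first by complementarity, the second by hypothesis --- so $\T(P',P')=0$, whence $\mathrm{id}_{P'}=0$, $P'\cong 0$, and the triangle collapses to $N\cong N'\in\Nn$. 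The characterisation of $\P$ is dual.

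For the second and third clauses the crucial point is that applying $\T(Q,-)$, $Q\in\P$, to a complementary triangle of $B$ produces a natural isomorphism $\T(Q,P(B))\xrightarrow{\ \sim\ }\T(Q,B)$, and dually $\T(N(A),M)\xrightarrow{\ \sim\ }\T(A,M)$ for $M\in\Nn$. Naturality in both variables of these isomorphisms is precisely the adjunction datum for the third clause, the structural maps $P(A)\to A$ and $A\to N(A)$ being the respective (co)units. Functoriality of $P$ and $N$ is read off the same isomorphisms: a morphism $\alpha\colon A\to B$ determines the unique factorisation of $P(A)\to A\xrightarrow{\alpha}B$ through $P(B)\to B$, which we take as $P(\alpha)$, and uniqueness of such factorisations forces $P(\beta\alpha)=P(\beta)P(\alpha)$, $P(\mathrm{id})=\mathrm{id}$ and the naturality square; $N$ is treated dually via $\T(N(A),M)\cong\T(A,M)$. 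Uniqueness of the complementary triangle of a fixed $A$ then follows by running this with $\alpha=\mathrm{id}_A$ for two such triangles: one obtains mutually inverse maps on the $P$-terms, again by uniqueness of factorisations, which extend by the fill-in axiom to a morphism of triangles and, by the five lemma applied to the long exact sequences $\T(T,-)$ (all $T$), to an isomorphism of triangles.

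For the fourth clause I would show the composite $L\colon\P\hookrightarrow\T\to\T/\Nn$ is an equivalence. It is essentially surjective because $N(A)$ is the cone of $P(A)\to A$, so this map is inverted in $\T/\Nn$ and every object there is isomorphic to one coming from $\P$. For full faithfulness, represent a morphism $LP\to LP'$ by a roof $P\xrightarrow{a}W\xleftarrow{s}P'$ with $\Cone(s)\in\Nn$ (the relevant calculus of fractions being available since $\Nn$ is thick); applying $\T(P,-)$ to the triangle $P'\xrightarrow{s}W\to\Cone(s)\to\Sigma P'$ and using $\T(P,\Sigma^{j}\Cone(s))=0$ shows $s_{*}\colon\T(P,P')\to\T(P,W)$ is an isomorphism, so $a=sb$ for a unique $b\colon P\to P'$ and the roof equals $L(b)$; the same uniqueness gives faithfulness, since $L(b)=0$ forces $b$ to factor through an object of $\Nn$, hence $b=0$. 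In particular $\T/\Nn$ is locally small, so the Verdier quotient exists as a genuine category; the case of $\T/\P$ is dual. The fifth clause is then formal: $\mathbb{L}K:=K\circ P$ is a functor by composition, $K$ applied to the natural maps $P(A)\to A$ provides the natural transformation $\mathbb{L}K\Rightarrow K$, and $\mathbb{L}K$ factors through $\T/\Nn$ because $P(N)\cong 0$ for $N\in\Nn$ (apply the uniqueness just established to the complementary triangle $0\to N\xrightarrow{\mathrm{id}_N}N\to 0$).

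I expect the one genuinely delicate step to be the full faithfulness of $\P\hookrightarrow\T/\Nn$: it is essential to present quotient morphisms by roofs whose ``denominator'' $s$ sits on the target, so that the one-sided orthogonality $\T(\P,\Nn)=0$ is applied in the direction that makes $s_{*}$ invertible; with the opposite presentation one would need the unavailable vanishing $\T(\Nn,\P)=0$. Everything else is formal manipulation of the triangulated axioms --- mapping-cone fill-ins and the five lemma --- with no analytic ingredient.
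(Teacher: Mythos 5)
Your argument is correct and complete. The paper itself offers no proof of this proposition --- it is quoted from \cite[Proposition 2.9]{nestmeyer:loc} --- and what you have written is the standard derivation (degenerate long exact sequences coming from the one-sided orthogonality $\T(\P,\Nn)=0$, unique factorisation through $P(B)\to B$ for functoriality and uniqueness, and the right-fraction calculus for full faithfulness of $\P\to\T/\Nn$), so there is no divergence to report. One substantive remark: the isomorphisms you establish, $\T(Q,P(B))\cong\T(Q,B)$ for $Q\in\P$ and $\T(N(A),M)\cong\T(A,M)$ for $M\in\Nn$, exhibit $P$ as a \emph{right} adjoint of the embedding $\P\to\T$ and $N$ as a \emph{left} adjoint of the embedding $\Nn\to\T$ (test this on $\T=\T_1\times\T_2$ with $\P=\T_1\times 0$ and $\Nn=0\times\T_2$, where $P$ and $N$ become the two projections). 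The third clause as printed interchanges ``left'' and ``right''; your proof silently establishes the correct version, with $P(A)\to A$ the counit of the adjunction between the embedding of $\P$ and $P$, and $A\to N(A)$ the unit of the adjunction between $N$ and the embedding of $\Nn$. Your closing caveat about orienting the roofs so that the denominator sits over the target is exactly right, and is indeed the one place where the asymmetry $\T(\P,\Nn)=0$ versus the unavailable $\T(\Nn,\P)=0$ must be respected.
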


The following result will be very important for us.

\begin{theo}[{\cite[Theorem 3.16]{meyer:tri}}]
Let $\T$ be a triangulated category with countable direct sums, and let $\I$ be a homological ideal with enough projective objects. Suppose that $\I$ is compatible with countable direct sums. Then the pair of localizing subcategories $(\PI,N_\I)$ in $\T$ is complementary. 
\end{theo}

A pair of complementary subcategories helps clarify the degree to which a projective resolution ``computes'' a homological functor into the category of abelian groups. The object $P(A)$ resulting from Proposition \ref{prop:csub} is called the \emph{$P_\I$-cellular approximation} of $A$ (it is called \emph{simiplicial} approximation in \cite{nestmeyer:loc}). 

\begin{defi}
In general, the homotopy direct limit of a countable inductive system $(A_n,\alpha_m^n)$ is defined as the object $A^h_\infty$ fitting into the exact triangle below,
\[
\xymatrix{\bigoplus A_n \ar[r]^{\mathrm{id} - S} & \bigoplus A_n \ar[r] & A^h_\infty \ar[r] & \Sigma \bigoplus A_n .}
\]
where $S|_{A_n}\colon A_n\to A_{n+1}$ is just the connecting map $\alpha_n^{n+1}$. We write $\text{ho-lim}(A_n,\alpha_m^n)=A^h_\infty$, or simply $\hlim A_n$ when the connecting maps are clear from context.
\end{defi} 
\begin{rema}
The object $P(A)$ can be computed as the homotopy limit of an inductive system $(P_n,\phi_n)$ with $P_n \in P_\I$ (in fact, $P_n$ belongs to a subclass of objects in $P_\I$, see \cite[Proposition 3.18]{meyer:tri} for more details). 
\end{rema}
We mention a few more properties of this limit that will be useful for our later arguments. First of all, the last map in the triangle above is equivalent to a sequence of maps $\alpha_n^\infty\colon A_n \to A^h_\infty$ with the compatibility relation $\alpha_n^\infty \circ \alpha_m^n=\alpha_m^\infty$ when $m\leq n$. 

\begin{lemm}[{\cite{nee:tri}}]\label{lem:millimone}
Suppose $F$ is a (co)homological functor, i.e., it sends exact triangles to long exact sequences of abelian groups. 
\begin{itemize}
\item (homological case): if $F(\bigoplus A_n)\cong \bigoplus F(A_n)$, then the maps $\alpha_n^\infty$ give an isomorphism $\varinjlim F_k(A_n)\cong F_k(A^h_\infty).$
\item (cohomological case): if $F(\bigoplus A_n)\cong \prod F(A_n)$, there is a short exact sequence
\[
0\longrightarrow \varprojlim{}^1F^{k-1}(A_n) \longrightarrow F^k(A^h_\infty) \longrightarrow \varprojlim F^k(A_n)\longrightarrow 0,
\]
where the last map is induced by $(\alpha_n^\infty)_{n\in\N}$.
\end{itemize}
\end{lemm}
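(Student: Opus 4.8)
The plan is to apply the functor $F$ to the defining exact triangle of $A^h_\infty$,
\[
\bigoplus_n A_n \xrightarrow{\;\mathrm{id}-S\;} \bigoplus_n A_n \longrightarrow A^h_\infty \longrightarrow \Sigma\bigoplus_n A_n,
\]
and to dissect the resulting long exact sequence of abelian groups using the hypothesis on how $F$ interacts with the countable direct sum.

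\emph{Homological case.} Since $F$ sends exact triangles to long exact sequences and $F(\bigoplus_n A_n)\cong\bigoplus_n F(A_n)$, we obtain in each degree $k$ an exact strand
\[
\bigoplus_n F_k(A_n) \xrightarrow{\;\mathrm{id}-S_*\;} \bigoplus_n F_k(A_n) \longrightarrow F_k(A^h_\infty) \longrightarrow \bigoplus_n F_{k-1}(A_n) \xrightarrow{\;\mathrm{id}-S_*\;} \bigoplus_n F_{k-1}(A_n).
\]
The point is that $\mathrm{id}-S_*$ is injective on $\bigoplus_n F_k(A_n)$: for a finitely supported family $x=(x_n)$ one has $(S_*x)_m=(\alpha_{m-1}^{m})_*x_{m-1}$ (with the convention $x_{-1}=0$), so $(\mathrm{id}-S_*)(x)=0$ reads $x_m=(\alpha_{m-1}^{m})_*x_{m-1}$ for every $m$, and an induction starting from the least index in the support forces $x\equiv 0$. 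Hence the connecting maps $\partial_*$ vanish in all degrees, the long exact sequence breaks into short exact sequences
\[
0 \longrightarrow \bigoplus_n F_k(A_n) \xrightarrow{\;\mathrm{id}-S_*\;} \bigoplus_n F_k(A_n) \longrightarrow F_k(A^h_\infty) \longrightarrow 0,
\]
and it only remains to identify $\coker(\mathrm{id}-S_*)$ with $\varinjlim_n F_k(A_n)$. This is the standard mapping-telescope presentation of a sequential colimit: dividing $\bigoplus_n F_k(A_n)$ by the image of $\mathrm{id}-S_*$ imposes exactly the relations $y\sim(\alpha_n^{n+1})_*y$ for $y\in F_k(A_n)$, so the quotient has the universal property of $\varinjlim_n F_k(A_n)$, and the induced maps out of the $F_k(A_n)$ are precisely the $(\alpha_n^\infty)_*$.

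\emph{Cohomological case.} Here $F$ turns the countable coproduct into a product, so the same triangle yields in each degree a long exact sequence involving $\prod_n F^k(A_n)$ and the endomorphism $\mathrm{id}-S^*$, where $(S^*\phi)_n=(\alpha_n^{n+1})^*\phi_{n+1}$. This map need not be injective; instead, by the very definition of the derived inverse limit, its kernel is $\varprojlim_n F^k(A_n)$ and its cokernel is $\varprojlim{}^{1}_{n} F^k(A_n)$. Extracting from the long exact sequence the four-term strand
\[
\prod_n F^{k-1}(A_n) \xrightarrow{\;\mathrm{id}-S^*\;} \prod_n F^{k-1}(A_n) \longrightarrow F^k(A^h_\infty) \longrightarrow \prod_n F^k(A_n) \xrightarrow{\;\mathrm{id}-S^*\;} \prod_n F^k(A_n)
\]
and rewriting the outer cokernel and kernel then gives precisely the asserted short exact sequence
\[
0 \longrightarrow \varprojlim{}^{1}_{n} F^{k-1}(A_n) \longrightarrow F^k(A^h_\infty) \longrightarrow \varprojlim_n F^k(A_n) \longrightarrow 0,
\]
the last arrow being induced by the system $(\alpha_n^\infty)_{n\in\N}$ as in the homological case.

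\emph{Expected obstacle.} There is no serious difficulty here: the substance is the injectivity of $\mathrm{id}-S_*$ together with the telescope identification of its cokernel with the colimit in the homological case, and, in the cohomological case, simply recalling that $\varprojlim$ and $\varprojlim{}^{1}$ are by definition the kernel and cokernel of $\mathrm{id}-S^*$ on the product. The one thing worth a moment's care is matching orientation and sign conventions so that the connecting maps produced by the long exact sequence genuinely reassemble into the maps $\alpha_n^\infty$ promised in the statement; this is routine once the triangle is written out explicitly.
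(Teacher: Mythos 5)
Your proof is correct and is exactly the standard argument from the cited reference (the paper itself gives no proof of this lemma, only the citation to Neeman): apply $F$ to the defining triangle of the homotopy colimit, use injectivity of $\mathrm{id}-S_*$ on the direct sum to split the long exact sequence and identify the cokernel with $\varinjlim$, and in the cohomological case read off $\varprojlim$ and $\varprojlim^1$ as the kernel and cokernel of $\mathrm{id}-S^*$ on the product. Nothing further is needed.
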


Let us consider the ordinary inductive limit of $C^*$-algebras $A_\infty$ associated to the system $(A_n,\alpha_m^n)$, where the maps $\alpha_m^n$ are equivariant $\ast$-homomorphisms. We keep using $\alpha_n^\infty$ for the canonical maps $A_n\to A_\infty$. The relation between $A^h_\infty$ and $A_\infty$, as discussed in \cite[Section 2.4]{nestmeyer:loc}, is based on the notion of an \emph{admissible} system in $\KKK^G$. We do not need this definition here, but we recall a sufficient condition: the system $(A_n,\alpha_m^n)$ is \emph{admissible} if there exist equivariant completely positive contractions $\phi_n\colon A_\infty \to A_n$ such that $\alpha_n^\infty\circ \phi_n\colon A_\infty\to A_\infty$ converges to the identity in the point norm topology \cite[Lemma 2.7]{nestmeyer:loc}. The situation is simpler in $E_G$-theory: by Definition \ref{def:eth}, since all extensions in $E_G$-theory are admissible, all inductive systems are admissible too. 

\begin{prop}\label{prop:admlim}
We have $A^h_\infty\cong A_\infty$ in the category $E_G$. If the inductive system $(A_n,\alpha_m^n)$ is admissible, we have $A^h_\infty\cong A_\infty$ in the category $\KKK^G$.
\end{prop}

\subsection{Crossed products of Hilbert modules and descent}

In this section we recall the notion of crossed product of Hilbert modules and define the Kasparov descent morphism in the context of groupoids. We will focus on \emph{reduced} crossed products. To this end, we start by recasting $C_0(X)$-algebras under the perspective of $C^*$-bundles. If $A$ is a $C_0(X)$-algebra, there exists a topology on $\mathcal{A}=\bigsqcup_{x\in X} A_x$ making the natural map $\mathcal{A}\to X$ an upper-semicontinuous $C^*$-bundle. The associated algebra of sections vanishing at infinity, denoted $\Gamma_0(X,\mathcal{A})$, admits a $C_0(X)$-linear isomorphism onto $A$. The correspondence $A\mapsto \mathcal{A}$ sends $C_0(X)$-linear morphisms to $C^*$-bundles morphisms.

If $f\colon Y \to X$ is a continuous map, the pullback $C^*$-algebra $f^*A$ can also be defined by first constructing the pullback bundle $f^*\mathcal{A}$, then setting $f^*A=\Gamma_0(Y,f^*\mathcal{A})$. A $G$-action on $A$ can be given by defining a functor from $G$ (viewed as a category) to the category of $C^*$-algebras, sending $x\in X$ to $A_x$, then imposing continuity on the resulting $G$-action on the topological space $\mathcal{A}$. The definition of $A\rtimes G$ can then be reframed by endowing the compactly supported sections $\Gamma_c(G,r^*\mathcal{A})$ with a $\ast$-algebra structure, and completing in the appropriate norm as explained previously.

Given a $G$-algebra $(A,\alpha)$ and a Hilbert $A$-module $\EE$, for each $x\in X$ one defines the Hilbert $A_{x}$-module $\EF_{x}$ to be the balanced tensor product $\EE\otimes_{A}A_{x}$.  The space $\EF:=\bigsqcup_{x\in X}\EF_{x}$ may be topologized to obtain an upper-semicontinuous Hilbert $\AF$-module bundle $p_{\EF}:\EF\longrightarrow X$. The space of sections $\Gamma_{0}(X;\EF)$ is equipped with pointwise operations to furnish a Hilbert $\Gamma_{0}(X;\AF)$-module, to which $\EE$ is canonically isomorphic as a Hilbert $A$-module. We will identify $\EE$ with its associated section space $\Gamma_{0}(X;\EF)$.
We have associated bundles of $C^{*}$-algebras $\K(\EF)$ and $\LL(\EF)$, whose fibres over $x\in X$ are $\K(\EF_{x})$ and $\LL(\EF_{x})$, respectively (the former bundle is upper-semicontinuous).  By the identification $\EE = \Gamma_{0}(X;\EF)$, we then also have $\K(\EE) = \Gamma_{0}(X;\K(\EF))$ and $\LL(\EE) = \Gamma_{b}(X;\LL(\EF))$ (strictly continuous bounded sections).

A $G$-action $\EE = \Gamma_{0}(X;\EF)$ consists of a family $\{W_{\gamma}\}_{\gamma\in \GG}$ such that:
	\begin{itemize}
		\item for each $\gamma\in \GG$, $W_{\gamma}:\EF_{s(\gamma)}\longrightarrow \EF_{r(\gamma)}$ is an isometric isomorphism of Banach spaces such that $\langle W_{\gamma}e, W_{\gamma}f\rangle_{r(\gamma)} = \alpha_{\gamma}(\langle e,f\rangle_{s(\gamma)})$ for all $e,f\in\EF_{s(\gamma)}$;
		\item the map $\GG\tensor[_s]{\times}{_{p_{\EF}}}\EF\longrightarrow\EF$, $(\gamma,e)\mapsto W_{\gamma}e$ defines a continuous action of $\GG$ on $\EF$.
	\end{itemize}
Conjugation by $W$ gives rise to a strictly continuous action $\varepsilon:\GG\tensor[_s]{\times}{_{p_{\EF}}}\LL(\EF)\longrightarrow\LL(\EF)$ of $\GG$ on the upper semicontinuous bundle $\LL(\EF)$ (the restriction of $\varepsilon$ to the compact operators is continuous in the usual sense).

If $(B, \beta) $ is a $G$-algebra and $\pi\colon B\to \LL(\EE)$ a $C_0(X)$-linear representation, we define a $G$-representation by requiring equivariance, namely for all $\gamma\in\GG$ we have
	\[
	\varepsilon_{\gamma}\circ\pi_{s(\gamma)} = \pi_{r(\gamma)}\circ \beta_{\gamma}.
	\]
Given a Kasparov module $(\pi,\EE,T)$ representing a class in $\KKK^G(B,A)$, let us consider the $B\rtimes_r G$-$A\rtimes_r G$-module $(\tilde{\pi},\EE\hot_A (A\rtimes_r G), T\hot 1)$ where $\tilde{\pi}$ is a representation of $B\rtimes_r G$ induced by $\pi$ as follows. First of all, note that $\EE\hot_A (A\rtimes_r G)$ is isomorphic to the completion of $\Gamma_c(G,r^*\EF)$ with respect to the $\Gamma_{c}(\GG;r^{*}\AF)$-valued inner product
\[
\langle\xi,\xi'\rangle(\gamma):=\int_{\GG}\alpha_{\eta}\big(\langle\xi(\eta^{-1}),\xi'(\eta^{-1}\gamma)\rangle_{s(\eta)}\big)\,d\lambda^{r(\gamma)}(\eta),
\]
for $\xi,\xi'\in\Gamma_{c}(\GG;r^{*}\EF)$ and $\gamma\in G$. We denote this completion $\EE\rtimes G$. Consider the formula below, defined for $f\in\Gamma_{c}(\GG;r^{*}\AF)$, $\xi\in\Gamma_{c}(\GG;r^{*}\EF)$, and $\gamma\in\GG$,
	\[
	(f\cdot\xi)(\gamma):=\int_{\GG}\pi_{r(\eta)}(f(\eta))W_{\eta}\big(\xi(\eta^{-1}\gamma)\big)\,d\lambda^{r(\gamma)}(\eta).
	\]
This determines a bounded representation $\tilde{\pi}=\pi\rtimes G :A\rtimes_r\GG\longrightarrow\LL(\EE\rtimes G)$ (see for example \cite[Prop.~7.6]{macd:kk}).

\begin{defi}
We define the Kasparov \emph{descent} morphism to be the homomorphism of abelian groups
\[
\jmath^G\colon \KKK^G(B,A)\to \KKK(B\rtimes_r G, A\rtimes_r G)
\]
which sends the class of $(\pi,\EE, T)$ to the class of $(\tilde{\pi},\EE\hot_A (A\rtimes_r G),T\hot 1)$.
\end{defi}

It can be checked that $\jmath^G$ is compatible with the product in $\KKK^G$, meaning that $\jmath^G(x\,\hot_D\, y)=\jmath^G(x)\,\hot_{D\rtimes_r G}\, \jmath^G(y)$, giving us a well-defined functor \cite[Theorem~3.4]{leGall:Survey}.

\section{Induction-restriction adjunction}
\label{sec:indres}

Consider a subgroupoid $H\subseteq G$. The inclusion map $H\hookrightarrow G$ induces a natural restriction functor $\Res_G^H\colon \KKK^G\to \KKK^H$. In this section we will construct a functor in the other direction, called the induction functor, and prove that these two functors are adjoint when $H\subseteq G$ is \emph{open}. This generalizes earlier results for transformation groups \cite{nestmeyer:loc} and ample groupoids \cite{bon:goingdownbc}.

\subsection{The induction functor} 

Let $(B,\beta)\in \KKK^H$ with moment map $\rho\colon C_0(H^{0})\to Z(\mathcal{M}(B))$. In this subsection it is sufficient to assume $H$ is locally closed in $G$. Recall $G_{H^{0}}$ is the subspace of $G$ consisting of arrows with source in $H^{0}$.
We consider the restriction of the source map $\phi= s|_{ G_{H^0}}:G_{H^{0}}\rightarrow H^{0}$, and construct the pullback algebra
\[
\phi^*B= C_0(G_{H^{0}}) \tens{s}{H^{0}}{\rho} B
\]
This balanced tensor product is then a $C_0(H^{0})$-algebra in its own right and can be equipped with the diagonal action $\text{rt}\otimes \beta$ of $H$, where $\text{rt}$ denotes the action of $H$ on $C_0(G_{H^{0}})$ induced by right translation. We define the induced algebra as the corresponding reduced crossed product
\[
\Ind_H^G B:=(C_0(G_{H^{0}}) \tens{s}{H^{0}}{\rho} B)\rtimes_{\text{rt}\otimes \beta} H.
\]
To define a $G$-action on $\Ind_H^G B$, notice that $G$ also acts on the balanced tensor product $C_0(G_{H^{0}}) \otimes_{H^{0}} B$ by $\mathrm{lt}\otimes \mathrm{id}_B$, where $\mathrm{lt}$ denotes the action of $G$ on $C_0(G_{H^{0}})$ induced by left translation. A straightforward computation reveals that the actions $\mathrm{rt}\otimes\beta$ and $\mathrm{lt}\otimes \mathrm{id}_B$ commute and therefore the left translation action of $G$ descends to an action on the crossed product $(C_0(G_{H^{0}}) \otimes_{H^{0}}B)\rtimes_{\text{rt}\otimes \beta} H.$

Having defined $\Ind_H^G$ on objects, let us consider the case of morphisms. Consider a right Hilbert $B$-module $\EE$. Considering the canonical action $B\longrightarrow \mathcal{M}(C_0(G_{H^{0}})\otimes_{H^{0}} B)$ given by multiplication in the second factor, we can form the $\phi^*B$-module
\[
\phi^*\EE=\EE\otimes_B \left(C_0(G_{H^{0}})\otimes_{H^{0}} B\right).
\]
Note the module above corresponds to the space of section of the pullback bundle $\phi^*\EF$.
Assume now that $\EE$ carries an action of $H$ (call it $\epsilon$) along with a non-degenerate equivariant representation $\pi\colon A\to \mathcal{L}(\EE)$ of an $H$-algebra $A$. First of all, we note that $\epsilon\otimes (\mathrm{rt}\otimes \beta)$ defines an $H$-action on $\phi^*\EE$. Then we define a representation of $\phi^*A$ on  $\mathcal{\phi^*\EE}$ by considering elements $f\otimes a$, with $f\in C_c(G_{H^{0}})$ and $a\in A$, whose linear span is dense in $\Gamma_c(G_{H^{0}},\phi^*\AF)\subseteq \phi^*A$, and setting $\phi^*\pi(f\otimes a) = \pi(a)\otimes (f\,\cdot)$.

Now, if $(\pi,\EE,T)$ is an $A$-$B$-Kasparov module, then $(\phi^*{\pi},\phi^*\EE,T\hot 1)$ is a $\phi^*A$-$\phi^*B$-module equipped with an action of $H$, and we can define the induction functor by means of the descent morphism defined above, as follows:
\[
\Ind_H^G({\pi},\EE,T)= \jmath_H(\phi^*{\pi},\phi^*\EE,T\hot 1).
\]

To complete the description of $\Ind_H^G$, we need two more observations. The $\phi^*B$-module $\EE\otimes_B (C_0(G_{H^{0}})\otimes_{H^{0}} B)$ admits a $G$-action induced by left translation on $C_0(G_{H^{0}})$. Notice this action is defined by fibering over the range map. Clearly $T\hot 1$ is equivariant with respect to this translation. To check the equivariance of $\phi^*\pi$, by definition it is sufficient to consider $\gamma\in G$ and $f\in C_c(G_{H^{0}})$, and write
\[
[\gamma\cdot(f\cdot(\gamma^{-1}\cdot g))](\eta) = f(\gamma\eta)g(\gamma\gamma^{-1}\eta)= (\mathrm{lt}_\gamma(f)\cdot g)(\eta)
\]
with $g\in C_0(G_{H^{0}})$, $\eta\in G_{H^{0}}$ with $r(\eta)=s(\gamma)$. This ensures the $G$-action commutes with the $H$-action on $(\phi^*{\pi},\phi^*\EE,T\hot 1)$, hence $\jmath_H(\phi^*{\pi},\phi^*\EE,T\hot 1)\in\KKK^G(A,B)$. Finally, as $\Ind_H^G$ is defined as a composition of the pullback functor $\phi^*$ with the descent functor $\jmath^G$, it is indeed a functor $\Ind_H^G\colon\KKK^H\to\KKK^G$.

\begin{rema}\label{rem:indmodel}
Both the descent functor $\jmath_G:\KKK^G\rightarrow \KKK$ and the induction functor $\Ind_H^G:\KKK^H\rightarrow \KKK^G$ can be abstractly constructed using the universal property of equivariant $\KKK$-theory, by observing that the respective constructions on the $C^*$-level are compatible with split-exact sequences, stabilisations, and homotopies (compare \cite{meyernest:tri}). In many applications however it is useful to have a concrete model at hand. This is certainly the case for the adjunction result in Theorem \ref{thm:iradj} below, but has also proven to be a useful construction in \cite{valmak:groupoid, bdgw:matui}.

The model for the induction functor in \cite{bon:goingdownbc} is different from the one employed here. Given an $H$-$C^*$-algebra $A$, the construction of $\Ind_H^G(A)$ in \cite{bon:goingdownbc} prescribes constructing the pullback algebra $\phi^*A= C_0(G_{H^0})\tensor[_s]{\otimes}{_\rho} A$ as above, but then considers the (generalized) fixed-point algebra $\phi^*A^H$ associated to the diagonal $H$-action. If $H$ is acting properly on $G$, then the main result in \cite{brown:proper} implies that $\phi^*A^H$ is strongly Morita equivalent to $\Ind_H^G(A)$. It is not hard to see that the imprimitivity bimodule witnessing this equivalence gives a $G$-equivariant $\KKK$-equivalence. 

It should be noted that, when $H\subseteq G$ is closed (hence $G\rtimes H$ is proper), then the spectrum of $\phi^*C_0(Z)^H$ is homeomorphic to the ordinary induction space $G\times_H Z$ (see \cite[Proposition~3.22]{bon:goingdownbc}). However, if $H\subseteq G$ is open, then it need not act properly on $G$, and it is well-known that quotients by non-proper actions can lead to pathological topological spaces (e.g., non-Hausdorff, non-locally-compact). It is for this reason that in this paper, where induction from \emph{open} subgroupoids is considered, we have taken the approach of defining induction via crossed products.
\end{rema}

\subsection{Proof of the adjunction}

Recall that if $G$ acts freely and properly on a second countable, locally compact, Hausdorff space $Y$, then $G\ltimes Y$ is Morita equivalent as a groupoid to $Y/G$ and hence the groupoid $C^*$-algebra $C_0(Y)\rtimes G\cong C^*(G\ltimes Y)$ is strongly Morita equivalent to $C_0(Y/G)$ \cite{brown:proper}. Note that $G\ltimes Y$ is an amenable groupoid, so the reduced and full crossed products are isomorphic, see for example \cite[Corollary 2.1.17 \& Proposition 6.1.10]{renroch:amgrp}). 

In particular, when $Y$ equals $G$ itself and the action is given by right translation, the associated imprimitivity bimodule $X^G$ gives a $*$-isomorphism $C_0(G)\rtimes_{\text{rt}} G\cong \cK(L^2(G))$, where $L^2(G)$ is the standard continuous field of Hilbert spaces associated to $G$. The $\KKK$-class induced by $X^G$ will be important in a moment.

If $(A,G,\alpha)$ is a groupoid dynamical system, then the pushforward along the source map $s_*\alpha$ is an isomorphism of $C^*$-dynamical systems:
\[
s_*\alpha: (s_*(C_0(G)\tens{s}{G^{0}}{\rho}A),G,\text{rt}\otimes\alpha)\to (s_*(C_0(G)\tens{r}{G^{0}}{\rho}A),G,\text{rt}\otimes \mathrm{id}_A),
\]
where the intertwining map is given precisely by $\alpha$ \cite{gall:kk}.
As a consequence we have the following.
\begin{lemm} \label{lem:indresisom}
If $H\subseteq G$ is a locally closed subgroupoid and $A$ is a $G$-algebra, then we have a canonical isomorphism
\[
\Phi \colon \Ind_H^G\Res_G^H A \cong (C_0(G_{H^{0}})\rtimes_{\mathrm{rt}} H) \otimes_{G^{0}} A.
\]
After $\Phi$, the $G$-action on the right-hand side is given by $\text{lt}\otimes \alpha$, i.e., left translation on $C_0(G_{H^{0}})\rtimes_{\mathrm{rt}} H$, tensorized with the original action $\alpha$ on $A$.
\end{lemm}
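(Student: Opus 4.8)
The plan is to derive everything from the single non-formal input recalled just above the statement, namely that the source push-forward $s_*\alpha$ is an isomorphism of $C^*$-dynamical systems over $G$; all the remaining steps are manipulations of balanced tensor products and reduced crossed products.

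First I would spell out the left-hand side. Since $\Res_G^H A$ has underlying algebra $A$ (restricted over $H^{0}=H\cap G^{0}$), carries the restricted action $\alpha|_H$, and has the correspondingly restricted structure map $\rho$, by definition
\[
\Ind_H^G \Res_G^H A = \big(C_0(G_{H^0}) \tens{s}{H^0}{\rho} \Res_G^H A\big) \rtimes_{\mathrm{rt}\otimes \alpha|_H} H,
\]
where $\phi = s|_{G_{H^0}}\colon G_{H^0}\to H^0$ as in the construction of $\Ind$. Restricting the isomorphism $s_*\alpha$ along the locally closed inclusion $G_{H^0}\hookrightarrow G$ and along the subgroupoid $H\subseteq G$ produces an $H$-equivariant, $C_0(G_{H^0})$-linear $*$-isomorphism
\[
\Phi_0\colon \big(C_0(G_{H^0}) \tens{s}{H^0}{\rho} A,\ \mathrm{rt}\otimes \alpha|_H\big) \;\xrightarrow{\cong}\; \big(C_0(G_{H^0}) \tens{r}{G^0}{\rho} A,\ \mathrm{rt}\otimes\mathrm{id}_A\big),
\]
which on sections of the pullback bundles is $\xi \mapsto \big(\gamma \mapsto \alpha_\gamma(\xi(\gamma))\big)$; on the target the $H$-action is ``straightened'', in that it only translates the base $G_{H^0}$ on the right and acts trivially on the copy of $A$. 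Because $\Phi_0$ is a restriction of the $C_0(G)$-linear map $\alpha$, it is automatically compatible both with the $C_0(H^0)$-structure on $C_0(G_{H^0})$ coming from the source map (which is used by $\rtimes H$) and with the $C_0(G^0)$-structure coming from the range map (which is used below to split off $A$). Applying the reduced crossed product by $H$, which is functorial for equivariant isomorphisms, turns $\Phi_0$ into
\[
\Ind_H^G \Res_G^H A \;\cong\; \big(C_0(G_{H^0}) \tens{r}{G^0}{\rho} A\big) \rtimes_{\mathrm{rt}\otimes\mathrm{id}_A} H.
\]

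It remains to commute the crossed product past the tensor factor $A$ and to track the $G$-action. Since the range map $r\colon G_{H^0}\to G^0$ is invariant under right translation by $H$ and the $H$-action on $A$ is trivial, there is a canonical isomorphism
\[
\big(C_0(G_{H^0}) \tens{r}{G^0}{\rho} A\big) \rtimes_{\mathrm{rt}\otimes\mathrm{id}_A} H \;\cong\; \big(C_0(G_{H^0}) \rtimes_{\mathrm{rt}} H\big) \otimes_{G^0} A,
\]
which is checked on the dense $*$-subalgebras of $\Gamma_c$-sections: both sides are the algebraic balanced tensor product over $C_0(G^0)$ of $\Gamma_c(H, r_H^*\,\mathcal{C})$ (for $\mathcal{C}$ the bundle of $C_0(G_{H^0})$ over $H^0$) with $A$, with the same convolution and involution, and the two reduced completions coincide because their regular representations are assembled from the same Hilbert modules. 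Composing the three isomorphisms yields $\Phi$. For the $G$-action, the left-translation action $\mathrm{lt}\otimes\mathrm{id}$ of $G$ on $C_0(G_{H^0})\tens{s}{H^0}{\rho}A$ commutes with the $H$-action, and the cocycle identity $\alpha_g\circ\alpha_{g^{-1}\gamma}=\alpha_\gamma$ shows that $\Phi_0$ intertwines $\mathrm{lt}\otimes\mathrm{id}_A$ with $\mathrm{lt}\otimes\alpha$ on the straightened side; this survives the passage to crossed products and the last isomorphism, where $\mathrm{lt}$ becomes the descended left-translation $G$-action on $C_0(G_{H^0})\rtimes_{\mathrm{rt}}H$. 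Hence the $G$-action transported along $\Phi$ is $\mathrm{lt}\otimes\alpha$, as asserted.

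The conceptual content is concentrated in the isomorphism $s_*\alpha$; everything else is bookkeeping. The step I would be most careful with is keeping the various structure maps apart in the middle of the argument: $C_0(G_{H^0})$ is simultaneously a $C_0(H^0)$-algebra via the source map and a $C_0(G^0)$-algebra via the range map, and one must verify that restricting $s_*\alpha$ to the subgroupoid $H$ and the locally closed subset $G_{H^0}$ is compatible with forming the subquotient algebra underlying $\Res_G^H A$ and with all of the balanced tensor products in sight. The only point that requires a genuine (if standard) computation rather than a formal manipulation is the commutation of $\rtimes_r H$ with $\otimes_{G^0} A$ in the penultimate display.
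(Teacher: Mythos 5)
Your proposal is correct and follows essentially the same route as the paper: both arguments hinge on the pushforward isomorphism $s_*\alpha$ restricted to $H$ and $G_{H^0}$, which straightens the action to $\mathrm{rt}\otimes\mathrm{id}_A$, followed by functoriality of the crossed product and the commutation of $\rtimes_r H$ with $\otimes_{G^0}A$. Your additional explicit tracking of the $G$-action through the isomorphisms is a welcome detail that the paper leaves implicit.
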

\begin{proof}
Let $\alpha:s^*A\longrightarrow r^*A$ denote the $C_0(G)$-linear isomorphism implementing the action of $G$ on $A$. Now we can consider the pushforward along the source maps to obtain a $C_0(G^{0})$-linear isomorphism
$\alpha=s_*\alpha:s_*s^*A\longrightarrow s_*r^*A$. Now $s_*s^*A$ is just the balanced tensor product $C_0(G)\otimes_{G^{0}}A$ with the canonical $C_0(G^{0})$-algebra structure, while $s_*r^*A= \Gamma_0(G,r^*\mathcal{A})$ is equipped with the $C_0(G^{0})$-algebra structure obtained by the formula $(\varphi\cdot f)(g)=\varphi(s(g))f(g)$ for $\varphi\in C_0(G^{0})$ and $f\in \Gamma_0(G,r^*\mathcal{A})$. Note that this differs from the canonical structure it obtains as a balanced tensor product! With the structure defined above we can identify the fibre over a point $x\in G^{0}$ as $\Gamma_0(G,r^*\mathcal{A})_x=\Gamma_0(G_x,r^*\mathcal{A})$ and it makes sense to consider the action $\mathrm{rt}\otimes\mathrm{id}_A$ defined by
$$(\mathrm{rt}\otimes\mathrm{id}_A)_g(f)(h)=f(hg).$$

Summing up the discussion we see that $\alpha$ implements an isomorphism of groupoid dynamical systems 
\[
(C_0(G)\tens{s}{G^{0}}{\rho}A,G,\text{rt}\otimes\alpha)\to (C_0(G)\tens{r}{G^{0}}{\rho}A,G,\text{rt}\otimes \mathrm{id}_A).
\]
Now if we restrict these systems to the subgroupoid $H$ we obtain an isomorphism
\[
(C_0(G_{H^{0}})\tens{s}{H^{0}}{\rho}\Res^H_G A,H,\text{rt}\otimes\alpha)\to (C_0(G_{H^{0}})\tens{r}{G^{0}}{\rho} A,H,\text{rt}\otimes \mathrm{id}_A)
\]
In particular we obtain an isomorphism between the crossed products and hence conclude
\begin{align*}
    \Ind_H^G \Res_G^H A & = (C_0(G_{H^{0}}) \tens{s}{H^{0}}{\rho} \Res_G^H A)\rtimes_{r,\text{rt}\otimes \alpha} H\\
    &\cong (C_0(G_{H^{0}}) \tens{r}{G^{0}}{\rho}  A)\rtimes_{r,\text{rt}\otimes \mathrm{id}_A} H\\
    &\cong (C_0(G_{H^{0}})\rtimes_{\mathrm{rt}}H)\otimes_{G^{0}} A.
\end{align*}
\end{proof}

Choosing $H=G$ in the result above yields an isomorphism 
\[
\Ind_G^G\Res_G^G(B)\cong (C_0(G)\rtimes_{\text{rt}} G) \otimes_{G^{0}} B\cong \cK(L^2(G))\otimes_{G^{0}} B.
\]

We now prepare to prove the adjunction by defining some auxiliary maps. From now on we assume $H\subseteq G$ to be an open subgroupoid. We get an induced embedding 
\[C_0(G_{H^{0}})\rtimes_{\mathrm{rt}}H\hookrightarrow C_0(G)\rtimes_{\mathrm{rt}}G\]
and hence, using the previous Lemma, an embedding
\[
\kappa\colon \Ind_H^G\Res_G^H(B)\longrightarrow \mathcal{K}(L^2(G))\otimes_{G^{0}} B.
\]
We can promote $X^G$ to a $\KKK^G$-equivalence 
\[
X^G_A\in \KKK^G(\Ind_G^G\Res_G^G(A),A)
\]
given by the right $A$-module $L^2(G)\tensor[_r]{\otimes}{_\rho} A$, where $A$ acts pointwise as ``constant functions''. The representation of the crossed product $r^*A\rtimes G\cong \Ind_G^G\Res_G^G(A)$ is the integrated form of the covariant pair given by the right regular representation of $G$, and pointwise multiplication of functions in $r^*A$. We will denote this by $M_A\rtimes R_G$.

Now let $B\in \KKK^H$ and recall that 
\[
\Res_G^H\Ind_H^G(B)=(C_0(G|_{ H^{0}})\otimes_{H^{0}}B)\rtimes H
.\] Then the inclusion $C_0(H)\subseteq C_0(G|_{ H^{0}})$ induces a map
\[
\iota\colon \Ind_H^H B\cong (C_0(H)\otimes_{H^{0}}B)\rtimes H \to (C_0(G|_{ H^{0}})\otimes_{H^{0}}B)\rtimes H=\Res_G^H\Ind_H^G(B).
\]

\begin{theo}\label{thm:iradj} Let $G$ be a locally compact Hausdorff groupoid with Haar system. For every open subgroupoid $H\subseteq G$ there is an adjunction
\[
(\epsilon,\eta)\colon \Ind_H^G \dashv \Res_G^H
\]
with counit and unit
\begin{align*}
\epsilon\colon \Ind_H^G\Res_G^H&\to 1_{\KKK^G}\\
\eta\colon 1_{\KKK^H} &\to \Res_G^H\Ind_H^G
\end{align*}
described as follows:
\begin{align*}
\epsilon_A &= X^G_A\circ \kappa \\
\eta_B &=\iota \circ \bigl(X^{H}_B\bigr)^{\text{op}}.
\end{align*}
\end{theo}

Here below we isolate a couple of technical lemmas which will be useful in the proof of the adjunction. The first lemma is just an observation on the compatibility of the canonical element $X_A^G$ with restriction and induction.

\begin{lemm}\label{Lem:Restriction of canonical module}
Let $H\subseteq G$ be an open subgroupoid, and $A\in\KKK^G$. Then we have $X^H_{\Res_G^H A}=\sigma_{\Res^H_G A}(X_{C_0(H^{0})}^H)$ and $\Res_G^H(X_A^G)=\sigma_{\Res_G^H A}(\Res_G^H(X_{C_0(G^{0})}^G))$.
\end{lemm}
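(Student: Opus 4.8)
The statement asserts two "base-change" compatibilities for the canonical $\KKK$-class $X^G_A$ (and its $H$-analogue), where $\sigma_B$ is the tensoring functor $-\otimes_X B$. Both identities should be proved at the level of the explicit Hilbert-module Kasparov cycles representing these classes, since $X^G_A$ and $X^H_{\Res^H_G A}$ are not abstract — they come from imprimitivity bimodules of the form $L^2(G)\otimes_r A$ with the regular-representation covariant pair $M_A\rtimes R_G$. The key structural input is that for an open subgroupoid $H\subseteq G$ one has $C_0(H) = \Res^H_G C_0(G^0)$-equivariantly and, more to the point, that the standard continuous field of Hilbert spaces $L^2(H)$ over $H^0$ is exactly the restriction to $H^0$ of the $H$-action on (the relevant summand of) $L^2(G)$ — concretely, the Haar system $\{\lambda^x\}$ of $G$, restricted to $x\in H^0$ and to the open subspace $H^x\subseteq G^x$, is a Haar system for $H$.

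\textbf{First identity.} For $X^H_{\Res^H_G A}$, I would start from the description given in the text: it is the class of the right $(\Res^H_G A)$-module $L^2(H)\otimes_r \Res^H_G A$ with the representation $M_{\Res^H_G A}\rtimes R_H$ of $(\Res^H_G A)\rtimes H \cong \Ind^H_H\Res^H_H(\Res^H_G A)$. Now $\sigma_{\Res^H_G A}(X^H_{C_0(H^0)})$ is, by definition of $\sigma$, the class $X^H_{C_0(H^0)}\otimes_{H^0}\mathrm{id}_{\Res^H_G A}$, i.e. the image of the bimodule $L^2(H)$ (a $C_0(H^0)$--$C_0(H^0)$ imprimitivity structure after descent, giving $\K(L^2(H))\simeq C_0(H)\rtimes_{\mathrm{rt}}H$) under $-\otimes_{H^0}\Res^H_G A$. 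The balanced tensor product $L^2(H)\otimes_{H^0}\Res^H_G A$ is canonically $L^2(H)\otimes_r\Res^H_G A$, and chasing the covariant pair through the balanced tensor product shows the representation obtained is precisely $M_{\Res^H_G A}\rtimes R_H$ — the regular representation of $H$ tensored with pointwise multiplication by $\Res^H_G A$. So the two cycles are literally the same; the verification is a bookkeeping check that the isomorphisms $\K(L^2(H))\otimes_{H^0}\Res^H_G A \cong (C_0(H)\rtimes_{\mathrm{rt}}H)\otimes_{H^0}\Res^H_G A \cong \Ind^H_H\Res^H_H\Res^H_G A$ (via Lemma~\ref{lem:indresisom} with $H$ in place of $G$) intertwine the representations correctly, and that the operator part ($T=0$, as these are "homomorphism" cycles) is unaffected.

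\textbf{Second identity.} The content of $\Res^H_G(X^G_A) = \sigma_{\Res^H_G A}(\Res^H_G X^G_{C_0(G^0)})$ is again best seen on cycles. Restricting the $G$-equivariant cycle $(M_A\rtimes R_G,\ L^2(G)\otimes_r A,\ 0)$ to $H$ means: view $L^2(G)$ with its $H$-action (the restriction of the $G$-action, which fibres over the range map), and view the representation of $\Res^H_G(\Ind^G_G\Res^G_G A) = \Res^H_G(r^*A\rtimes G)$. On the other hand $\Res^H_G X^G_{C_0(G^0)}$ is the class of $L^2(G)$ as an $H$-equivariant $C_0(G^0)$--$(C_0(G^0)\rtimes G)$ cycle, and $\sigma_{\Res^H_G A}$ tensors on $\Res^H_G A$ over $G^0$ on the right, yielding $L^2(G)\otimes_{G^0}\Res^H_G A = L^2(G)\otimes_r \Res^H_G A$ with representation $M_{\Res^H_G A}\rtimes R_G$ of $(\Res^H_G A)\rtimes G$. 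These agree because the functor $\sigma_{\Res^H_G A} = -\otimes_{G^0}\Res^H_G A$ commutes with restriction (it is the restriction of $\sigma_A = -\otimes_{G^0}A$ — tensoring over the unit space is compatible with forgetting group-oid structure), and the class $X^G_A$ is by construction $\sigma_A$ applied to $X^G_{C_0(G^0)}$ at the level of the universal $C^*$-stability property. The one genuinely nontrivial point — and the \emph{main obstacle} — is to check that the $H$-equivariant structure on the module $L^2(G)\otimes_r A$ obtained by restricting the $G$-action matches, as Kasparov cycle, the one obtained by tensoring $\Res^H_G A$ onto the restricted $H$-equivariant module $L^2(G)$; this requires carefully unwinding how the $G$-action on $L^2(G)$ "fibres over $r$" versus the $C_0(G^0)$-structure used in $\sigma_{\Res^H_G A}$ (the same subtlety flagged in the proof of Lemma~\ref{lem:indresisom}, where $s_*r^*A$ carries a non-canonical $C_0(G^0)$-structure). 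Once that compatibility of continuous fields is pinned down, both equalities reduce to the statement that $\sigma_B$ and $\Res^H_G$ are triangulated functors commuting on the nose with these explicit cycles, which is immediate.
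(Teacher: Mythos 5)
Your proposal is correct and follows essentially the same route as the paper: the first identity is read off directly from the construction of $X^H_{\Res_G^H A}$ as the cycle $L^2(H)\otimes_{H^0}\Res_G^H A$ together with the isomorphism $\Ind_H^H\Res_H^H(B)\cong \cK(L^2(H))\otimes_{H^0}B$, and the second is obtained by combining the relation $X^G_A=\sigma_A(X^G_{C_0(G^0)})$ with the compatibility $\Res_G^H\circ\sigma_B\cong\sigma_{\Res_G^H B}\circ\Res_G^H$. The paper records exactly these two observations (in two sentences); your version merely spells out the cycle-level bookkeeping that the paper leaves implicit.
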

\begin{proof}
The first equality is immediate from the definition of $X_A^G$ and the isomorphism $\Ind_G^G\Res_G^G(A)\cong \cK(L^2(G))\otimes_{G^{0}} A$ explained above. The second equality follows from the first and the fact that restriction and tensorization commute.
\end{proof}

Let $L^2(G,B)$ denote the completion of $\Gamma_c(G,r^*\mathcal{B})$ with respect to the $B$-valued inner product $\langle \xi_1,\xi_2\rangle(x)=\int_{G^x} \xi_1(g)^*\xi_2(g)\,d\lambda^x(g)$. Note that $L^2(G,B)$ is canonically isomorphic to the $B$-module $L^2(G)\otimes_{G^{0}} B$ introduced above.

Let us make a point on notation before continuing the proof. So far we have used $A$ and $\AF$ to denote a $C_0(X)$-$C^*$-algebra and its corresponding $C^*$-bundle. However, this difference in font is not very convenient when $A$ is replaced by a more complicated algebra, e.g., $A=C_0(G)\rtimes H$. In the sequel we suppress this notational distinction, as the context suffices to disambiguate the usage.

\begin{lemm}\label{Lem:Ind of canonical module} Let $H\subseteq G$ be an open subgroupoid and $B\in\KKK^H$. Then there is an isometric $G$-equivariant homomorphism
\[
\Phi\colon\Ind_H^G L^2(H,B) \longrightarrow L^2(G,\Ind_H^G B)
\]of Hilbert $\Ind_H^G B$-modules.

\end{lemm}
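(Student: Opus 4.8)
Lemma~\ref{Lem:Ind of canonical module} is a statement about concrete Hilbert modules over $C:=\Ind_H^G B$, so the plan is to realize both modules as completions of explicit spaces of compactly supported continuous sections and to write $\Phi$ down by a single change of variables. Unwinding the crossed-product model of the induction functor, $\Ind_H^G L^2(H,B)=\phi^*L^2(H,B)\rtimes H$ is the completion of $\Gamma_c$ of the fibered product $\{(\gamma,g,\eta)\in H\times G\times H : s(g)=r(\gamma)=r(\eta)\}$, with the $(\gamma,g,\eta)$-component lying in $B_{r(\gamma)}$; here $\gamma$ is the crossed-product ($H$-)coordinate, $g$ the $C_0(G_{H^{0}})$-coordinate (the one carrying the $G$-action by left translation), and $\eta$ the coordinate of the $L^2$-fibre of $L^2(H,B)$. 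Unwinding $C=\Ind_H^G B$ and then $L^2(G,C)=L^2(G)\otimes_{G^{0}}C$, the latter is the completion of $\Gamma_c$ of $\{(\delta,\mu,g)\in G\times H\times G : r(\delta)=r(g),\ s(g)=r(\mu)\}$ with $(\delta,\mu,g)$-component in $B_{r(\mu)}$. On these dense subspaces I would set
\[
(\Phi\Xi)(\delta,\mu,g)=
\begin{cases}
\Xi(\mu,g,g^{-1}\delta) & \text{if } g^{-1}\delta\in H,\\
0 & \text{otherwise.}
\end{cases}
\]
This is meaningful: $r(g^{-1}\delta)=s(g)=r(\mu)$ matches the $\eta$-constraint of $\Xi$, and since $H$ is open the set $\{(\delta,g):g^{-1}\delta\in H\}$ is open, so $\Phi\Xi$ is again a section with compact support when $\Xi$ has.

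Next I would verify the three required properties. \emph{$G$-equivariance}: the $G$-action on $\Ind_H^G L^2(H,B)$ is left translation in the $g$-coordinate, and on $L^2(G,C)=L^2(G)\otimes_{G^{0}}C$ it is simultaneous left translation in $\delta$ and $g$; a direct substitution shows $\Phi$ intertwines them, the condition $g^{-1}\delta\in H$ being invariant under $(\delta,g)\mapsto(\gamma_0^{-1}\delta,\gamma_0^{-1}g)$. \emph{$C$-linearity}: both module structures are given by convolution, and $\Phi(\Xi\cdot c)=(\Phi\Xi)\cdot c$ reduces to a relabelling of the convolution variables compatible with the substitution $\eta\leftrightarrow g^{-1}\delta$. \emph{Isometry}: this is the heart of the matter. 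The $C$-valued inner product on the left is the convolution-type inner product of the descent construction (an integral over $H$ against $H$'s Haar system $\lambda_H$), while on $L^2(G,C)$ it is the pointwise integral $\langle\Psi,\Psi'\rangle(y)=\int_{G^y}\Psi(\delta)^*\Psi'(\delta)\,d\lambda^y(\delta)$. For a section of the form $\Phi\Xi$ the integrand vanishes unless $g^{-1}\delta\in H$, so the substitution $\delta=g\eta$ — legitimate by left-invariance of the Haar system, $d\lambda^{r(g)}(\delta)=d\lambda^{s(g)}(\eta)$ — turns $\int_{G^{r(g)}}$ into $\int_{H^{s(g)}}$ against $\lambda_H$, after which the two expressions coincide term by term. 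Being isometric on a dense subspace, $\Phi$ then extends to the completions.

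The genuinely essential point — as opposed to routine computation — is exactly this reduction of a $G$-integral to an $H$-integral, which is what forces $H$ to be \emph{open}: then $\lambda|_H$ is a Haar system for $H$ and extending by zero across $\partial H$ neither loses nor gains mass, so $\Phi$ is isometric rather than merely contractive; were $H$ closed one would instead be in the proper-action situation of Remark~\ref{rem:indmodel}. I expect the main obstacle to be the sheer bookkeeping in the $C$-linearity and isometry verifications: tracking the several integration variables and keeping straight which coordinate carries which action and which translation, all within the crossed-product convolution formulas. A secondary point to be careful about is the well-definedness of $\Phi$ on the chosen dense subspace (that $\Phi\Xi$ really lies in $\Gamma_0$, in particular its behaviour where the indicator jumps); this is harmless once isometry is established, since $\Phi$ then automatically extends continuously to the full completion. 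As a consistency check, when $H=G$ the indicator is identically satisfied and $\Phi$ becomes the canonical isomorphism $\Ind_G^G L^2(G,B)\cong L^2(G)\otimes_{G^{0}}\bigl(\cK(L^2(G))\otimes_{G^{0}}B\bigr)$ coming from Lemma~\ref{lem:indresisom}.
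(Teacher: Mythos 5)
Your construction is correct and follows the same overall strategy as the paper: realize both modules as completions of compactly supported sections of explicit fibred products, write down a formula supported on the open set $\{g^{-1}\delta\in H\}$, and verify isometry, right $\Ind_H^G B$-linearity and $G$-equivariance by direct substitution, with openness of $H$ entering exactly where you say it does (so that the $G$-integral over $\delta\in G^{r(g)}$ collapses, via $\delta=g\eta$, to an $H$-integral against the Haar system of $H$). The one substantive difference is that your map is not the paper's map: in the paper's coordinates the formula used is $\Phi(\xi)(g,h,x)=\beta_{x^{-1}g}\bigl(\xi(g^{-1}xh,\,g^{-1}x,\,g)\bigr)$, which carries a twist by the action and substitutes into the descent and $L^2(H)$ slots, whereas yours is the untwisted $\Xi(h,\,x,\,x^{-1}g)$; both are well-typed on the fibres and both satisfy the three required properties (in your isometry check note that after $\delta=g\eta$ one still needs the inner change of variables $\eta\mapsto\tilde{\mu}^{-1}\eta$ to match the descent inner product, so the agreement is not quite ``term by term''). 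This is immaterial for the lemma as stated, but the paper's specific $\Phi$ is chosen so that, in the proof of Theorem~\ref{thm:iradj}, it intertwines $\Ind_H^G(M_A\rtimes R_H)$ with $(M_{\Ind_H^G A}\rtimes R_G)\circ i$ for the particular twisted inclusion $i$ of Eq.~\eqref{eq:imap} and has the right image; with your $\Phi$ that later verification would have to be redone and may force a compensating change in $i$.
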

\begin{proof}
Let us first describe the module $\Ind_H^G L^2(H,B)$ more concretely. We have a canonical isomorphism $L^2(H,B)\otimes_B (C_0(G_{H^{0}})\otimes_{H^{0}} B)\cong L^2(H,C_0(G_{H^{0}})\otimes_{H^{0}} B)$ given by $\xi\otimes f\mapsto [h\mapsto \xi(h)f]$. Hence we can write $\Ind_H^G L^2(H,B)$ as $L^2(H,C_0(G_{H^{0}})\otimes_{H^{0}} B)\rtimes H$. So for a function $\xi\in \Gamma_c(H,r^*L^2(H,C_0(G_{H^{0}})\otimes_{H^{0}} B))$ we define $\Phi(\xi)\in L^2(G,\Ind_H^G B)$ as
$$\Phi(\xi)(g,h,x)=\left\{\begin{array}{ll}
  \beta_{x^{-1}g}(\xi(g^{-1}xh,g^{-1}x,g)),   & g^{-1}x\in H  \\
    0 ,& \text{otherwise}
\end{array}\right\}$$
where $g\in G$, $h\in H$, and $x\in G_{r(h)}^{r(g)}$.

Given $\xi_1,\xi_2 \in \Gamma_c(H,r^*L^2(H,C_0(G_{H^{0}})\otimes_{H^{0}} B))$, we compute (for $h\in H$ and $x\in G_{r(h)}$) that $\langle \Phi(\xi_1),\Phi(\xi_2)\rangle (h,x)$ equals
\begin{align*}
    &\int\limits_{G}\left[\Phi(\xi_1)(g)^\ast\Phi(\xi_2)(g)\right](h,x)\,d\lambda^{r(x)}(g)\\
    = &\int\limits_{G} \int\limits_{H}(\mathrm{rt}\otimes\beta)_{\tilde{h}}(\Phi(\xi_1)(g,\tilde{h}^{-1})^*\Phi(\xi_2)(g,\tilde{h}^{-1}h))(x)\,d\lambda^{r(h)}(\tilde{h})\,d\lambda^{r(x)}(g)\\
    = &\int\limits_{G} \int\limits_{H}\beta_{\tilde{h}}(\Phi(\xi_1)(g,\tilde{h}^{-1},x\tilde{h})^*\Phi(\xi_2)(g,\tilde{h}^{-1}h,x\tilde{h}))\,d\lambda^{r(h)}(\tilde{h})\,d\lambda^{r(x)}(g)\\
    =& \int\limits_{xH} \int\limits_{H}\beta_{x^{-1}g}(\xi_1(g^{-1}x,g^{-1}x\tilde{h},g)^*\xi_2(g^{-1}xh,g^{-1}x\tilde{h},g))\,d\lambda^{r(h)}(\tilde{h})\,d\lambda^{r(x)}(g).
\end{align*}

At this point we perform two change of variables and keep computing:
\begin{align*}
    &\stackrel{g\mapsto xg}{=}\int\limits_{H}\int\limits_{H}\beta_g(\xi_1(g^{-1},g^{-1}\tilde{h},xg)^*\xi_2(g^{-1}h,g^{-1}\tilde{h},xg))\,d\lambda^{r(h)}(\tilde{h})\,d\lambda^{s(x)}(g)\\
    & \stackrel{\tilde{h}\mapsto g\tilde{h}}{=}\int\limits_{H}\int\limits_{H}\beta_g(\xi_1(g^{-1},\tilde{h},xg)^*\xi_2(g^{-1}h,\tilde{h},xg))\,d\lambda^{s(g)}(\tilde{h})\,d\lambda^{s(x)}(g)\\
    & = \int\limits_{H}\int\limits_{H}\beta_{g^{-1}}(\xi_1(g,\tilde{h},xg^{-1})^*\xi_2(gh,\tilde{h},xg^{-1}))\,d\lambda^{r(g)}(\tilde{h})\,d\lambda_{s(x)}(g)\\
    & = \int\limits_{H} (\mathrm{rt}\otimes \beta)_{g^{-1}}(\langle \xi_1(g),\xi_2(gh)\rangle(x)\,d\lambda_{r(h)}(g)\\
    & = \langle \xi_1,\xi_2\rangle (h,x)
\end{align*}
This verifies that $\Phi$ extends to an isometry.
Now we proceed to checking that $\Phi$ is a right module map. Below we have $\xi \in \Gamma_c(H,r^*L^2(H,C_0(G_{H^{0}})\otimes_{H^{0}} B))$ as before, and the element $f$ belongs to $\Gamma_c(H,r^*(C_0(G_{H^{0}})\tensor[_s]{\otimes}{_\rho}B))$.

\begin{align*}
    (\Phi(\xi)f)(g,h,x) &= \Phi(\xi)(g,h,x)f(h,x)\\
    &=\int_{H^{r(g)}} \Phi(\xi)(g,\tilde{h},x)\beta_{\tilde{h}}(f(\tilde{h}^{-1}h,x\tilde{h}))\,d\lambda^{r(h)}(\tilde{h})\\
    & = \int_{H^{r(g)}} \beta_{x^{-1}g}(\xi(g^{-1}x\tilde{h},g^{-1}x,g))\beta_{\tilde{h}}(f(\tilde{h}^{-1}h,x\tilde{h}))\,d\lambda^{r(h)}(\tilde{h})\\
    & \stackrel{\tilde{h}\mapsto x^{-1}g\tilde{h}}{=} \int_{H^{s(g)}} \beta_{x^{-1}g}(\xi(\tilde{h},g^{-1}x,g)\beta_{\tilde{h}}(f(\tilde{h}^{-1}g^{-1}xh,g\tilde{h})))\,d\lambda^{s(g)}(\tilde{h})\\
    & = \beta_{x^{-1}g}((\xi f)(g^{-1}xh,g^{-1}x,g))\\
    & = \Phi(\xi f)(g,h,x)
\end{align*}

To complete the argument, we show that the left action of $G$ commutes with $\Phi$. Let us take $g'\in G$ with $r(g')=r(g)$ and compute
\begin{align*}
(g'\Phi(\xi))(g,h,x)&=\Phi(\xi)(g'^{-1}g,h,g'^{-1}x)\\
&=\beta_{x^{-1}g} (\xi(g^{-1}xh,g^{-1}x,g'^{-1}g))\\
&=\beta_{x^{-1}g}((g' \xi)(g^{-1}xh,g^{-1}x,g))\\
&=\Phi(g'\xi)(g,h,x)
\end{align*}
The proof is complete.
\end{proof}

\begin{proof}[Proof of Theorem \ref{thm:iradj}] 
We need to verify the counit-unit equations. We start by proving that for every $A\in \KKK^G$ the composition 
\[
\xymatrixcolsep{5pc}\xymatrix{\Res_G^H A \ar[r]^-{\eta_{\Res_G^H A}}& \Res_G^H\Ind_H^G\Res_G^H A \ar[r]^-{\Res_G^H(\epsilon_A)} & \Res_G^H A}
\]
equals the identity in $\KKK^H(\Res_G^HA,\Res_G^HA)$:
Expanding the definitions of counit and unit in this case we have
$\Res_G^H(\epsilon_A)\circ\eta_{\Res_G^H A} = \Res (X_A^G)\circ \Res(\kappa)\circ \iota\circ (X_{\Res_G^H A}^H)^{\text{op}}$.
Following the definitions it is then easily seen that after identifying
\begin{gather*}
\Ind_H^H(\Res_G^H A)=(C_0(H)\rtimes_{\mathrm{rt}}H)\otimes \Res_G^HA\\
\Res_G^H(\Ind_G^G A)\cong \Res_G^H((C_0(G)\rtimes_{\mathrm{rt}}G)\otimes_{G^{0}} A)\cong (C_0(G^{H^{0}})\rtimes_{\mathrm{rt}}G)\otimes_{H^{0}}\Res_G^H A
\end{gather*}
the composition
$\Res(\kappa)\circ\iota$ is just given by
\begin{equation}\label{Eq:kappaiota}
(C_0(H)\rtimes H)\otimes_{H^{0}} \Res_G^H A\stackrel{j\otimes \mathrm{id}}{\longrightarrow}(C_0(G^{H^{0}})\rtimes G)\otimes_{H^{0}} \Res_G^H A,
\end{equation}
where $j:C_0(H)\rtimes H\longrightarrow C_0(G^{H^{0}})\rtimes G$ is induced by the inclusion of $H$ as an open subgroupoid. Using Lemma \ref{Lem:Restriction of canonical module} we have
\begin{align*}
    \Res_G^H(\epsilon_A)\circ\eta_{\Res_G^H A} &=
    \Res (X_A^G)\circ \Res(\kappa)\circ \iota\circ (X_{\Res_G^H A}^H)^{\text{op}}\\
    &=\sigma_{\Res_G^H A}(\Res_G^H(X_{C_0(G^{0})}^G) \circ \sigma_{\Res_G^H A}(j)\circ \sigma_{\Res_H^G A}((X_{C_0(H^{0})}^H)^{\text{op}})\\
    &= \sigma_{\Res_G^H A}\left(\Res_G^H(X_{C_0(G^{0})}^G)\circ j\circ (X_{C_0(H^{0})}^H)^{\text{op}}\right) \end{align*}
Hence it is enough to show that the conclusion holds for $A=C_0(G^{0})$.
In this case we can further use the isomorphisms $C_0(H)\rtimes_{\mathrm{rt}} H\cong \mathcal{K}(L^2(H))$ and $C_0(G^{H^{0}})\rtimes_{\mathrm{rt}} G\cong \mathcal{K}(L^2(G^{H^{0}}))$ to replace the map in (\ref{Eq:kappaiota}) by the canonical map
\[
i\colon \cK(L^2(H))\to \cK(L^2(G^{H^{0}}))
\]
and the required verification is easily seen to be reduced to showing that the (interior) Kasparov product
\[
[(X^H_{C_0(H^{0})})^{\text{op}}]\hatotimes_{\cK(L^2(H))} i^*[\Res_G^H(X^G_{C_0(G^{0})})]
\]
equals the class of identity $id_{C_0(H^0)}$ in $\KKK^H(C_0(H^{0}),C_0(H^{0}))$. 

The element $\Res_G^H(X_{C_0(G^{0})}^G)\in \KKK^H(\mathcal{K}(L^2(G^{H^{0}})),C_0(H^{0}))$ can be represented by the triple $(L^2(G^{H^{0}}),\Phi,0)$, where $\Phi$ is the canonical action. Consequently, $i^*[\Res_G^H(X^G_{C_0(G^{0})})$ is represented by $(L^2(G^{H^{0}}),\Phi\circ i,0)$.
The representation $\Phi\circ i$ fails to be non-degenerate, but we can replace $L^2(G^{H^{0}})$ by its ``non-degenerate closure'' $\overline{\Phi\circ i(\mathcal{K}(L_s^2(H)))L^2(G^{H^{0}})}$ without changing its $\KKK^H$-class (see \cite[Proposition 18.3.6]{black:kth}). This module is easily seen to be (isomorphic to) $L^2(H)$. Therefore $i^*[\Res_G^H(X^G_{C_0(G^{0})})]=[X^H_{C_0(H^{0})}]$ and the desired equality follows from
\[
[(X^H_{C_0(H^{0}))})^{\text{op}}]\hatotimes_{\cK(L^2(H))} [X^H_{C_0(H^{0}))}]=1\in \KKK^H(C_0(H^{0}),C_0(H^{0})).
\]
The next verification in order regards the composition
\begin{equation}\label{eq:unit-counit}
\xymatrixcolsep{5pc}\xymatrix{\Ind_H^G(A) \ar[r]^-{\Ind_H^G(\eta_A)} & \Ind_H^G\Res_G^H\Ind_H^G(A) \ar[r]^-{\epsilon_{\Ind_H^G(A)}} & \Ind_H^G(A).}
\end{equation}
The map $\kappa\circ \Ind_H^G(\iota)$ gives an inclusion
\[
\xymatrix{\Bigl(C_0(G_{H^{0}})\tensor[_s]{\otimes}{_r}\Bigl[\Bigl(C_0(H)\tensor[_s]{\otimes}{_\rho}A\Bigr)\rtimes_{\mathrm{rt}\otimes\alpha} H\Bigr]\Bigr)\rtimes H \ar[d] \\
 \Bigl(C_0(G)\tensor[_s]{\otimes}{_r}\Bigl[\Bigl(C_0(G_{H^{0}})\tensor[_s]{\otimes}{_\rho}A\Bigr)\rtimes H_{\mathrm{rt}\otimes\alpha}\Bigr]\Bigr)\rtimes G.}
\]
By using the isomorphisms introduced in Lemma \ref{lem:indresisom} above, we can replace the previous inclusion into the more convenient map
\[
\xymatrix{\Bigl(C_0(G_{H^{0}})\tensor[_s]{\otimes}{_{r\otimes\rho}}\Bigl[\Bigl(C_0(H)\tensor[_r]{\otimes}{_\rho}A\Bigr)\rtimes_{\text{rt}\otimes \text{id}} H\Bigr]\Bigr)\rtimes_{} H \ar[d]^-{i} \\
 \Bigl(C_0(\overset{\gamma}{G})\tensor[_r]{\otimes}{_{r}}\Bigl[\Bigl(C_0(\overset{\nu}{G_{H^{0}}})\tensor[_s]{\otimes}{_\rho}A\Bigr)\rtimes \overset{\mu}{H}\Bigr]\Bigr)\rtimes_{\text{rt}\otimes \text{id}} \overset{\eta}{G}.}
\]
Above, the Greek letters indicate our choice of notation for the variable on the given groupoid. These will be useful in a moment. 

Recall the action on $A$ is denoted by $\alpha$. Suppressing notation for the inclusions $H\subseteq G$ and $C_0(H)\subseteq C_0(G)$, the map $i$ can be understood by 
\begin{equation}\label{eq:imap}
i(f)(\eta,\gamma,\mu,\nu)=\alpha_{\nu^{-1}\gamma}(f(\eta,\gamma,\mu,\gamma^{-1}\nu)),
\end{equation}
where $f$ is in $\Gamma_c(H,r^*(C_0(G_{H^{0}})\tensor[_s]{\otimes}{_{r\otimes\rho}}(C_0(H)\tensor[_r]{\otimes}{_\rho}A)\rtimes_{\text{rt}\otimes \text{id}} H))$. Note that the right-hand side is zero unless $\gamma^{-1}\nu\in H$ and $\eta\in H$ (note $\gamma\in G_{H^{0}}$ follows).
The composition in (\ref{eq:unit-counit}) can be computed via the Kasparov product (over the domain of $i$)
\[
[\Ind_H^G\bigl(\bigl(X^H_A\bigr)^{\text{op}}\bigr)]\hatotimes i^*[X^G_{\Ind_H^G A}] .
\]
We claim that
$$i^*[X^G_{\Ind_H^G A}]=\Ind_H^G(X_A^H)$$
The class $i^*[X^G_{\Ind_H^G A}]$ is represented by the Kasparov triple
\[
\left(L^2(G,\Ind_H^G A),(M_{\Ind_H^G A}\rtimes R_G)\circ i,0\right)
\]
while the class $\Ind_H^G(X_A^H)$ is represented by
\[\left(\Ind_H^G L^2(H,A),\Ind_H^G(M_A\rtimes R_H),0\right).\]
Consider the isometric embedding
\[
\Phi\colon\Ind_H^G L^2(H,A) \longrightarrow L^2(G,\Ind_H^G A)
\] from Lemma \ref{Lem:Ind of canonical module}.
We first verify that $\Phi$ intertwines the left actions of $\Ind_H^G \Ind_H^H A$.
To this end recall that for $f\in \Gamma_c(H,r^*(C_0(G_{H^{0}})\otimes_{G^{0}}\Ind_H^H A))$, we have that $(\Ind_H^G(M_A\rtimes R_H)(f)\xi)(g,h,x)$ equals
\[
\int\limits_H\int\limits_H f(h_1,x,h_2,h)\alpha_{h_1}(\xi(h_1^{-1}g,h_1^{-1}hh_2,xh_1))\,d\lambda^{s(h)}(h_2)\,d\lambda^{s(x)}(h_1).
\]

Hence, considering elements $\xi\in \Gamma_c(H,r^*L^2(H,C_0(G_{H^{0}})\otimes_{H^{0}} B))$ and $f\in \Gamma_c(H,r^*(C_0(G)\tensor[_s]{\otimes}{_{r}}(C_0(H)\tensor[_s]{\otimes}{_\rho}A)\rtimes H))$, we compute
\begin{align*}
    &\Phi(\Ind_H^G(M_A\rtimes R_H)(f)\xi)(g,h,x) =\alpha_{x^{-1}g}((\Ind_H^G(M_A\rtimes R_H)(f)\xi)(g^{-1}xh,g^{-1}x,g))\\
    =& \int\limits_H\int\limits_H\alpha_{x^{-1}g}(f(h_1,g,h_2,g^{-1}x)\alpha_{h_1}(\xi(h_1^{-1}g^{-1}xh,h_1^{-1}g^{-1}xh_2,gh_1)))\,d\lambda^{s(x)}(h_2)\,d\lambda^{s(g)}(h_1)\\
    =& \int\limits_H \int\limits_H i(f)(h_1,g,h_2,x)\alpha_{h_2}(\Phi(\xi)(gh_1,h_2^{-1}h,xh_2))\,d\lambda^{s(x)}(h_2)\,d\lambda^{s(g)}(h_1)\\
    =& \left((M_{\Ind_H^G A}\rtimes R_G)(i(f))\Phi(\xi)\right)(g,h,x).
\end{align*}

Since the representation $\Ind_H^G(M_A\rtimes R_H)$ is non-degenerate, it follows immediately that 
 $\text{Img}(\Phi)\subseteq\overline{((M_{\Ind_H^G A}\rtimes R_G)\circ i)L^2(G,\Ind_H^G A)}$. In fact, since $\text{Img}(\Phi)$ is closed, in order to have equality it suffices to show the image is dense. From the definition of $i$ in Eq. \eqref{eq:imap}, we see that 
 \[
\overline{((M_{\Ind_H^G A}\rtimes R_G)\circ i)L^2(G,\Ind_H^G A)}\subseteq \overline{L^2(G_{H^{0}},\Ind_H^G A)\cap F}
\]
where $F$ is spanned by those $L^2$-functions such that $f(g,h,x)=0$ unless $g^{-1}x\in H$ (notation from Lemma \ref{Lem:Ind of canonical module}). With this, the surjectivity is clear from the formula for $\Phi$ in Lemma \ref{Lem:Ind of canonical module}.
Since the element $i^*[X_{\Ind_H^G A}^G]$ can equally well be represented by the submodule $\overline{((M_{\Ind_H^G A}\rtimes R_G)\circ i)L^2(G,\Ind_H^G A)}$ (see \cite[Proposition 18.3.6]{black:kth}) we conclude that $i^*[X_{\Ind_H^G B}^G]=\Ind_H^G X_A^H$ and hence
 \[
[\Ind_H^G\bigl(\bigl(X^H_A\bigr)^{\text{op}}\bigr)]\hatotimes i^*[X^G_{\Ind_H^G A}]=1_{\Ind_H^G A},
\]
as desired.
\end{proof}

\subsection{Compatibility with other functors}

Let $f\colon Y\to X$ be a continuous map, and $A$ and $B$ be $C_0(X)$-algebras. There is a natural isomorphism $f^*(A\otimes_X B)=f^*(A)\otimes_Y f^*(B)$, because both algebras are naturally isomorphic to restrictions of $C_0(Y\times Y)\otimes A \otimes B $ to the same copy of $Y\times X$ in the topological space $Y\times Y\times X\times X$ (cf. \cite[Lemma 6.4]{bondel:kun})

\begin{lemm}\label{lem:indcomp}
There is a natural isomorphism 
\[
\Ind_H^G(A)\otimes_{G^0} B\cong \Ind_H^G(A\otimes_{H^0} \Res_G^H(B)).
\]
In particular $\Ind_H^G\circ f^* \cong f^*\circ\Ind_H^G $. 
\end{lemm}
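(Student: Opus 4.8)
The plan is to unravel both sides into reduced crossed products by $H$ of pullback/balanced-tensor algebras and then pass between them via three standard moves: a rearrangement of balanced tensor factors, an ``untwisting'' of the $B$-variable exactly as in the proof of Lemma~\ref{lem:indresisom}, and the compatibility of reduced crossed products with tensoring over the unit space. Write $\beta$ for the $H$-action on $A$ and $\alpha\colon s^*B\to r^*B$ for the structure isomorphism of the $G$-action on $B$, and set $D=C_0(G_{H^{0}})\tens{s}{H^{0}}{\rho}A$ with its $H$-action $\mathrm{rt}\otimes\beta$, so that by definition $\Ind_H^G A=D\rtimes_r H$, equipped with the $G$-action $\mathrm{lt}\otimes\mathrm{id}_A$ descending from left translation and with the $C_0(G^{0})$-structure coming from the ($\mathrm{rt}$-invariant) range map $r\colon G_{H^{0}}\to G^{0}$.

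Starting from the right-hand side, I would first use associativity of balanced tensor products (in the form of the identity $f^*(A\otimes_X B)\cong f^*A\otimes_Y f^*B$ recorded just before the statement) to rewrite $C_0(G_{H^{0}})\tens{s}{H^{0}}{\rho}(A\otimes_{H^{0}}\Res_G^H B)$ as $A\otimes_{H^{0}}\bigl(C_0(G_{H^{0}})\tens{s}{H^{0}}{\rho}\Res_G^H B\bigr)$, with $H$ acting by $\beta\otimes(\mathrm{rt}\otimes\alpha|_H)$ and $G$ by $\mathrm{id}_A\otimes(\mathrm{lt}\otimes\mathrm{id})$. The key point is that, since the source of every arrow of $G_{H^{0}}$ lies in $H^{0}$, the $C_0(G)$-linear isomorphism $\alpha$ restricts to an isomorphism
\[
C_0(G_{H^{0}})\tens{s}{H^{0}}{\rho}\Res_G^H B\;\xrightarrow{\ \cong\ }\;C_0(G_{H^{0}})\tens{r}{G^{0}}{\rho}B ,
\]
intertwining the $H$-action $\mathrm{rt}\otimes\alpha|_H$ with $\mathrm{rt}\otimes\mathrm{id}$ and the $G$-action $\mathrm{lt}\otimes\mathrm{id}$ with $\mathrm{lt}\otimes\alpha$; this is precisely the computation with $s_*\alpha$ from the proof of Lemma~\ref{lem:indresisom}, applied to $B$ and restricted to the open subgroupoid $H$ and the invariant open subspace $G_{H^{0}}$. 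Substituting this back and rearranging tensor factors once more turns the right-hand side into $(D\tens{r}{G^{0}}{\rho}B)\rtimes_r H$, where now $H$ acts by $\mathrm{rt}\otimes\beta$ on $D$ and trivially on $B$, while $G$ acts by $(\mathrm{lt}\otimes\mathrm{id}_A)\otimes\alpha$.

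Since the $C_0(G^{0})$-structure on $D$ being used here (via $r$) is $H$-invariant, the reduced crossed product commutes with this balanced tensor product, so $(D\otimes_{G^{0}}B)\rtimes_r H\cong(D\rtimes_r H)\otimes_{G^{0}}B=\Ind_H^G(A)\otimes_{G^{0}}B$ with matching $G$-actions, which completes the chain; every step is manifestly natural in $A$. I expect this last commutation --- that tensoring over $G^{0}$ with a $C_0(G^{0})$-algebra on which $H$ acts trivially is compatible with the \emph{reduced} crossed product by $H$ --- to be the only genuinely delicate point. It should be checked directly on the defining dense $\ast$-subalgebras, whose convolution products and involutions visibly agree, with the two reduced norms coinciding because the regular representations $\Lambda_x$ ($x\in H^{0}$) used to define them differ only by the passive factor $\mathrm{id}_{B_x}$ (this is an instance of the familiar compatibility of reduced crossed products with exterior tensoring, cf.\ \cite{bondel:kun}). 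Finally, the displayed consequence $\Ind_H^G\circ f^*\cong f^*\circ\Ind_H^G$ is the special case of the isomorphism in which $B$ is taken to be the relevant pulled-back commutative algebra $C_0(Y)$, upon unwinding the definition of $f^*$ as balanced tensoring with $C_0(Y)$ and identifying $\Res_G^H C_0(Y)$ with the pullback over $H^{0}$.
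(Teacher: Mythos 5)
Your proposal is correct and follows essentially the same route as the paper: rewrite the pullback of the tensor product via the compatibility $\phi^*(A\otimes_{H^0}\Res_G^H B)\cong\phi^*A\otimes_{G_{H^0}}\phi^*\Res_G^H B$, untwist the $B$-factor with the $s_*\alpha$ isomorphism from Lemma~\ref{lem:indresisom} so that $H$ acts trivially on it, and then pull $B$ out of the reduced crossed product. The only difference is presentational --- you work directly with balanced tensor rearrangements where the paper uses $\phi_*,\phi^*$ push--pull notation, and you explicitly flag the crossed-product/tensor commutation that the paper leaves implicit.
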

\begin{proof}
Let $\phi$ be the restriction of the source map to $G_{H^{0}}$. We have 
\[
\phi^*(A\otimes_{H^0} \Res_G^H(B))\cong \phi^*A\otimes_{G_{H^0}} \phi^*\Res_G^H(B)
\]
by the observation above.
Now pushing forward along $\phi$ again we obtain an isomorphism of $H$-$C^*$-algebras
$$(\phi_*(\phi^*(A\otimes_{H^0} \Res_G^H(B)))\cong \phi_*(\phi^*A\otimes_{G_{H^0}} \phi^*\Res_G^H(B))\cong \phi_*\phi^*A\otimes_{H^0} \Res_G^H B$$
Now when we take crossed products by $H$ for the leftmost system, we get $\Ind_H^G (A\otimes_{H^0} \Res_G^H(B))$ by definition.
The rightmost system is just $C_0(G_{H^0})\otimes_{H^0}A\otimes_{H^0} \Res_G^H B$ with the diagonal $H$-action $\mathrm{rt}\otimes \alpha\otimes \Res_G^H(\beta)$. So upon using commutativity of the tensor product and applying Lemma \ref{lem:indresisom}, we may replace it by the action $\mathrm{rt}\otimes \alpha\otimes \mathrm{id}_{B}$.

Summing up, after taking crossed products by $H$ we arrive at the desired conclusion:

\[
\phi_*(\phi^*(A\otimes_X \Res_G^H(B)))\rtimes H \cong \phi_*\phi^*A\rtimes H\otimes_{G^0} B,
\] 
where $\phi_*\phi^*A\rtimes H=\Ind_H^G(A)$ by definition.
\end{proof}

We conclude this section by listing other compatibility relations, which are straightforward as each of them involves a forgetful functor.
\begin{gather*}
\Res_G^H(A\otimes_X B)\cong \Res_G^H(A)\otimes_X \Res_G^H(B)\\
\Ind_H^G\circ f_* \cong f_*\circ \Ind_H^G\qquad \Res_G^H\circ f_* \cong f_*\circ \Res_G^H\qquad \Res_G^H\circ f^* \cong f^*\circ \Res_G^H.
\end{gather*}

\section{The strong Baum--Connes conjecture}\label{sec:bc}

In this section we formulate the strong Baum--Connes conjecture for \emph{étale} groupoids by using the framework developed in the previous section.

As a start, a natural idea is identifying a ``probing'' class of objects $\mathcal{P}r\subseteq \KKK^G$, that we understand somewhat better than a generic object of $\KKK^G$, and for which we can prove the equality of categories $\langle \P r\rangle= \KKK^G$.

\begin{defi}\label{def:proper}

We say that $G$ is \emph{proper} if the \emph{anchor map} $(r,s)\colon G \to X\times X$ is proper.
Furthermore, if $Z$ is a a second countable, locally compact, Hausdorff $G$-space, we say that $G$ \emph{acts properly} on $Z$ if $Z\rtimes G$ is proper. A $G$-algebra $A$ is called proper if there is a proper $G$-space $Z$ such that $A$ is a $Z\rtimes G$-algebra. 

We let $\mathcal{P}r$ denote the class of proper objects in $\KKK^G$.
\end{defi}

Evidently, a commutative $G$-$C^*$-algebra is proper if and only if its spectrum is a proper $G$-space. 

Recall that $G$ is called étale if its source and range maps are local homeomorphisms. A \emph{bisection} is an open $W\subseteq G$ such that $s|_W,r|_W$ are homeomorphisms onto an open in $X$. Hereafter it is assumed that $G$ is étale.

 Recall that a map $f:X \to Y$ is proper at $y \in Y$ if
\begin{itemize} 
\item the fiber at $y$ is compact,
\item any open containing the fiber also contains a tube (a tube is the preimage of an open neighborhood of $y$).
\end{itemize}
A map is proper if and only if it is proper at each point. The proposition below clarifies the local picture of proper actions (cf. \cite[Theorem 4.1.1]{moer:orbi} and \cite[Proposition 2.42]{tu:propnonhaus}).

\begin{prop}\label{prop:prodr}
Suppose $G$ acts properly on $Z$ and denote by $\rho\colon Z \to X$ the moment map. Then for each $z\in Z$ there are open neighborhoods $U^\rho,U$, respectively of $z\in Z$ and $\rho(z)\in X$, satisfying:
\begin{itemize}
\item the fixgroup $\Gamma_z:=\{g\in G\mid g z=z\}$ acts on $U$;
\item There exists an isomorphism from $\Gamma_z \ltimes U$ onto an open subgroupoid $H_z$ of $G|_U$;
\item the $G$-action restricted to $U^\rho$ is induced from $\Gamma_z \ltimes U$, in other words the groupoid $(G\ltimes Z)|_{U^\rho}$ equals $(\Gamma_z \ltimes U)\ltimes U^{\rho}$. 
\end{itemize}
\end{prop}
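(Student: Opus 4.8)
The plan is to analyse the local structure of $G$ near the isotropy of $z$ — which is a finite group — and then to invoke the definition of properness ``at the point $(z,z)$'' to force every arrow joining two nearby points of $Z$ to lie in the resulting small subgroupoid. Write $x_0=\rho(z)$. The first point is that the stabilizer $K:=G^z_z=\{g\in G\mid r(g)=s(g)=x_0,\ gz=z\}$ is finite: it is precisely the fibre over $(z,z)$ of the anchor map $\vartheta\colon Z\rtimes G\to Z\times Z$, it is a discrete subspace of $G$ because $G$ is étale, and it is compact because $\vartheta$ is proper by Definition~\ref{def:proper}; a compact discrete space is finite.

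Next I would build the étale slice for $G$ at $x_0$. For each $g\in K$ choose an open bisection $W_g\ni g$, taking $W_e$ inside the open unit space $G^{(0)}$. Since two continuous local sections of $s$ agreeing at $x_0$ agree on a neighbourhood of $x_0$, after finitely many shrinkings inside a common neighbourhood $U$ of $x_0$ — with the usual final passage to a $K$-invariant subneighbourhood — one arranges that over $U$: $s(W_g)=r(W_g)=U$, $W_e=U$, $W_g^{-1}=W_{g^{-1}}$, and $W_gW_h=W_{gh}$ for all $g,h\in K$. Then $\Gamma:=\bigcup_{g\in K}W_g$ is an open subgroupoid of $G|_U$ with $\Gamma^{(0)}=U$, the homeomorphisms $\tau_g:=r\circ(s|_{W_g})^{-1}$ define an action of $K$ on $U$, and $g\mapsto W_g$ gives an isomorphism $U\rtimes K\xrightarrow{\ \sim\ }\Gamma$. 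This yields the first two bullet points.

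For the last bullet point I would confine the action. The set $O\subseteq Z\rtimes G$ of arrows whose $G$-component lies in $\Gamma$ is open and contains the fibre $\vartheta^{-1}(z,z)=K$, so properness of $\vartheta$ at $(z,z)$ supplies a product neighbourhood $O_1\times O_2$ of $(z,z)$ in $Z\times Z$ with $\vartheta^{-1}(O_1\times O_2)\subseteq O$. Put $U^\rho:=O_1\cap O_2\cap\rho^{-1}(U)$. Then any arrow $(g,w)$ of $Z\rtimes G$ with $w\in U^\rho$ and $gw\in U^\rho$ has $\vartheta(g,w)\in O_1\times O_2$, hence $g\in\Gamma$, while also $\rho(w),\rho(gw)\in U$; so the arrows of $(Z\rtimes G)|_{U^\rho}$ are exactly those of $U^\rho\rtimes\Gamma=U^\rho\rtimes(U\rtimes K)$, which is the claimed equality of groupoids.

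The delicate part is the construction of $\Gamma$: coordinating the bisections $W_g$ so that multiplication and inversion of $G$ restrict to an honest copy of $U\rtimes K$ on their union, in particular passing to a $K$-invariant base $U$ so that the $\tau_g$ become homeomorphisms of $U$ rather than merely partial ones. This is routine but bookkeeping-heavy — it is essentially the standard local model of an étale groupoid near a finite isotropy group (cf.\ \cite[Theorem~4.1.1]{moer:orbi} and \cite[Proposition~2.42]{tu:propnonhaus}) — and once it is granted, the identification of $K$ and the confinement step are immediate.
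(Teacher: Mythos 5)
Your proposal is correct and follows essentially the same route as the paper's proof: finiteness of $G^z_z$ from properness, assembly of a local model $U\rtimes G^z_z$ from compatible bisections over a shrunken neighbourhood $U$, and then confinement of all nearby arrows into that model using properness of the anchor map at $(z,z)$ (the paper phrases this as the open set $\rho^{-1}(U)\rtimes W$ containing a tube, which is the same device as your $O_1\times O_2$). Your extra care in making $U$ invariant under the $\tau_g$ so that the bisections give an honest action of $G^z_z$ on $U$ is a worthwhile refinement of a point the paper treats more briskly.
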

\begin{proof} 
Since the $G$-action on $Z$ is proper, $\Gamma_z$ is a finite subgroup of the isotropy group $G_{\rho(z)}^{\rho(z)}$. For each $g\in \Gamma_z$ choose an open bisection $W_g$ around $g$. Since $G$ is Hausdorff and $\Gamma_z$ is finite, we may assume that the $W_g$ are pairwise disjoint. 
For any two $g,h\in \Gamma_z$, there is an open neighborhood $V$ of $\rho(z)$ such that $W_{gh}\cap G|_V$ and $(W_g W_h)\cap G|_V$ are non-empty and equal, because both are bisections containing $gh$. Likewise, for each $g$ in $\Gamma_z$ there is an open neighborhood $V$ of $\rho(z)$ where $W_{g^{-1}}\cap G|_{V}$ and $(W_g)^{-1}\cap G|_{V}$ are non-empty and equal. Ranging over the group $\Gamma_z$, we collect a finite number of $V$'s whose intersection we denote by $U$. Notice $U$ is an open neighborhood of $\rho(z)$. We now replace all the $W_g$'s by $W_g\cap r^{-1}(U)\cap s^{-1}(U)$. Then we can define an action of $\Gamma_z$ on $U$
by setting $g\cdot x:=r(s_{|W_g}^{-1}(x))$, i.e. $g$ acts by the partial homeomorphism $U\rightarrow U$ associated with the bisection $W_g$. This is then indeed a well-defined action by the construction of the $W_g$ above. We have a canonical continuous groupoid homomorphism
$$\Phi:\Gamma_z\ltimes U\rightarrow G,\ \Phi(g,x)=s_{|W_g}^{-1}(x).$$
Since the $W_g$ were chosen pairwise disjoint this is in fact an  isomorphism of topological groupoids onto the union $H:=\bigsqcup_{g\in \Gamma_z}W_g$.

Define $U^\prime:=\rho^{-1}(U)$.
Because $G$ acts on $Z$, and $H$ is a subgroupoid of $G$, the notation $U^\prime \rtimes H$ makes sense, and it indicates an open subgroupoid of the restriction $(Z\rtimes G)|_{U^\prime}$. The action of $G$ on $Z$ is proper, in particular the anchor map of the groupoid $Z\rtimes G$ is proper at $z$. Now $U^\prime \rtimes H$ is an open containing the fiber of the anchor map at $z$, therefore it contains a tube. In other words there is an open neighborhood of $z$, say $U^\rho$ (we may assume it is also contained in $U^\prime$), such that the restriction $(Z\rtimes G)|_{U^\rho}$ (i.e., the tube at $U^\rho$) is contained in $U^\prime\rtimes H$. This means that the groupoid that $G$ induces on $U^\rho$ only involves arrows belonging to $H$ (recall that $H$ is isomorphic to $U\rtimes \Gamma$).
\end{proof}

\begin{rema}\label{rem:indprop}
As a simple corollary of Proposition \ref{prop:prodr}, the range map $r\colon s^{-1}(U^\rho)\to Z$ descends to a $G$-equivariant homeomorphism
\begin{equation}\label{eq:inmor}
G\times_{H} U^\rho \to G\cdot U^\rho=V.
\end{equation}
Moreover, the space $s^{-1}(U^\rho)$ provides a principal bibundle implementing an equivalence between $(G\rtimes Z)|_{U^\rho}$ and $(G\rtimes Z)|_{V}$ in the sense of \cite{murewi:morita} (cf. \cite{hoyo:lie}). Hence, the induction functor $\KKK^{(G\rtimes Z)|_{U^\rho}}\to \KKK^{(G\rtimes Z)|_{V}}$ is essentially surjective \cite{gall:kk}, i.e., if $A$ is a $G$-algebra over $Z$ then $A|_{V}$ is isomorphic to $\Ind_{(G\rtimes Z)|_{U^\rho}}^{(G\rtimes Z)|_{V}}(A|_{U^{\rho}})$. We can forget the $C_0(Z)$-structure and obtain $A|_{V}\cong \Ind_{H}^G(A|_{U^{\rho}})$ in $\KKK^G$.
\end{rema}

In Definition \ref{def:proper} for a proper $G$-algebra we can always assume $Z$ to be a realization of $\underbar{E}G$, the classifying space for proper actions of $G$. Indeed if $\phi\colon Z\to \underbar{E}G$ is a $G$-equivariant continuous map, then $\phi^*\colon C_0(\underbar{E}G) \to M(C_0(Z))$ can be precomposed with the structure map $C_0(Z) \to ZM(A)$, making $A$ into an $\underbar{E}G\rtimes G$-algebra.

Note that if $G$ is locally compact, $\sigma$-compact, Hausdorff, $\underbar{E}G$ always exists and is locally compact, $\sigma$-compact, and Hausdorff; in our case $G$ is second countable hence $\underbar{E}G$ is too \cite[Proposition 6.15]{tu:novikov}. 

A subgroupoid of the form $\Phi(\Gamma_z \ltimes U )\subseteq G$, as in Proposition \ref{prop:prodr}, will be called a \emph{compact action} around $\rho(z)$. Given a proper $G$-algebra over $Z=\underbar{E}G$, for any $z\in Z$ we can find an open neighborhood as in \eqref{eq:inmor}. These opens cover $Z$ and we can extract a countable subcover $\mathcal{V}$ (being second countable, $Z$ is a Lindelöf space). Corresponding to this subcover we get a countable collection of compact actions which we denote by~$\F$. Define the full subcategory of \emph{compactly induced objects},
\[
\Cc\I=\{\Ind_Q^G(B) \mid B\in \KKK^Q, Q\in \F \}.
\]
We define a homological ideal $\I$ as the kernel of a single functor
\begin{align}\label{eq:functorF}
F\colon \KKK^G&\to \prod_{Q\in \F}\KKK^Q\\\notag
A&\mapsto (\Res_G^Q(A))_{Q\in \F}
\end{align}
The functor $F$ commutes with direct sums because each restriction functor clearly does. Hence $\I$ is compatible with countable direct sums. The proof below follows the blueprint in \cite[Theorem 7.3]{meyer:tri}, we reproduce it here for completeness.

\begin{theo}\label{thm:bccc}
The projective objects for $\I$ are the retracts of direct sums of objects in $\Cc\I$ and the ideal $\I$ has enough projective objects. Therefore the subcategories in $(\CI,N_\I)$ form a pair of complementary subcategories.
\end{theo}
\begin{proof}
According to \cite[Theorem 3.22]{meyer:tri}, we need to study the (possibly) partially defined left adjoint of the functor $F$ defined in Eq. \eqref{eq:functorF}. Since each compact action $Q\in \F$ is open in $G$, the functor $\Ind_Q^G$ is left adjoint to $\Res_G^Q$. Thus we may take the globally defined adjoint
\[
F^\dagger((A_Q)_{Q\in \F})=\bigoplus_{Q\in \F}\Ind_Q^G(A_Q).
\]
Since $\F$ is countable and $F$ is compatible with countable direct sums, this definition is legitimate. It follows that $\I$ has enough projective objects which are retracts as described. Indeed, $F^\dagger F(A)$ is projective because the isomorphism
\[
\KKK^G(\Ind_Q^G\Res_G^Q(A),B)\cong \KKK^Q(\Res_G^Q(A),\Res_G^Q(B))
\]
is given by $f\mapsto \Res_G^Q(f)\circ\eta_{\Res_G^Q(A)}$, where $\eta$ is the unit of the adjunction. We then see that if $f\in \I$, then we must have $f=0$. Similarly, the counits of the adjunctions yield an $\I$-epic morphism $\delta: F^\dagger F(A)\to A$ \cite[Definition 21]{meyernest:tri}. In particular, if $A$ is already projective, then $\delta$ can be embedded in a split triangle. Split triangles are isomorphic to direct sum triangles \cite[Corollary 1.2.7]{nee:tri}.
\end{proof}

Using notation from Section \ref{subsec:compcell}, and applying the result above, we have $\mathcal{P}=\CI=\langle P_\I\rangle$ and $\mathcal{N}=N_\I$. Since we will only be dealing with the homological ideal $\ker(F)$ just described, we will drop the $\mathcal{I}$ from our notation and just write $\mathcal{N}$ instead of $N_\mathcal{I}$. The objects in $\mathcal{N}\subseteq \KKK^G$ are also referred to as \emph{weakly} contractible.  We denote by $P(A)$ the $\Cc\I$-cellular approximation of $A$. Note $P(A)$ belongs to $\mathcal{P}$.

\begin{coro}
We have the following equivalences,
\begin{equation*}
P(A)\cong P(C_0(G^0))\otimes^{\text{}}_{G^0} A\qquad N(A)\cong N(C_0(G^0))\otimes^{\text{}}_{G^0} A
\end{equation*}
\end{coro}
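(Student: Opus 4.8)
The plan is to derive both equivalences from the functoriality of the complementary-pair decomposition together with the tensoring functor $\sigma_B = -\otimes_{G^0} B$. First I would observe that $\sigma_B$ is an exact (triangulated) functor on $\KKK^G$: this was noted in the preliminaries, since with the maximal balanced tensor product $-\otimes_{G^0}B$ is exact and preserves semi-split extensions. Next, the key point is that $\sigma_B$ sends the pair $(\mathcal{P},\mathcal{N})$ ``into itself'' in the appropriate sense, i.e.\ it maps $\mathcal{P}$ to $\mathcal{P}$ and $\mathcal{N}$ to $\mathcal{N}$. For $\mathcal{P} = \langle \Cc\I\rangle$ this follows because $\sigma_B$ is triangulated and commutes with countable direct sums, so it suffices to check $\Ind_Q^G(C)\otimes_{G^0}B \in \mathcal{P}$ for a compact action $Q\in\F$ and $C\in\KKK^Q$; but by Lemma \ref{lem:indcomp} we have $\Ind_Q^G(C)\otimes_{G^0} B\cong \Ind_Q^G\!\big(C\otimes_{Q^0}\Res_G^Q(B)\big)$, which is again compactly induced. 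For $\mathcal{N}$, note $A\in\mathcal{N}$ means $\Res_G^Q(A)\cong 0$ for all $Q\in\F$; since $\Res_G^Q$ commutes with $-\otimes B$ (listed among the compatibility relations at the end of Section \ref{sec:indres}), $\Res_G^Q(A\otimes_{G^0}B)\cong \Res_G^Q(A)\otimes_{Q^0}\Res_G^Q(B)\cong 0$, so $A\otimes_{G^0}B\in\mathcal{N}$.

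With these two stability statements in hand, I would apply $\sigma_B$ to the defining cellular-approximation triangle of $C_0(G^0)$,
\[
P(C_0(G^0))\longrightarrow C_0(G^0)\longrightarrow N(C_0(G^0))\longrightarrow \Sigma P(C_0(G^0)),
\]
obtaining an exact triangle
\[
P(C_0(G^0))\otimes_{G^0}A\longrightarrow C_0(G^0)\otimes_{G^0}A\longrightarrow N(C_0(G^0))\otimes_{G^0}A\longrightarrow \Sigma\big(P(C_0(G^0))\otimes_{G^0}A\big),
\]
where I use $C_0(G^0)\otimes_{G^0}A\cong A$. By the stability observations above, the first term lies in $\mathcal{P}$ and the third in $\mathcal{N}$. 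Since by Proposition \ref{prop:csub} the complementary decomposition triangle of $A$ is unique up to isomorphism, this triangle must be isomorphic to $P(A)\to A\to N(A)\to \Sigma P(A)$, yielding $P(A)\cong P(C_0(G^0))\otimes_{G^0}A$ and $N(A)\cong N(C_0(G^0))\otimes_{G^0}A$.

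I expect the main obstacle to be purely bookkeeping rather than conceptual: making sure $\sigma_B$ really applies cleanly as a triangulated functor (the sign/suspension subtleties, and the fact that one should use the maximal tensor product throughout so that exactness is automatic and no nuclearity hypothesis on $A$ is needed), and verifying that the isomorphism $\Ind_Q^G(C)\otimes_{G^0}B\cong \Ind_Q^G(C\otimes_{Q^0}\Res_G^Q B)$ of Lemma \ref{lem:indcomp} is natural enough to be compatible with the localizing-subcategory structure (it is, since $\langle \Cc\I\rangle$ is closed under isomorphism and $\sigma_B$ is exact and sum-preserving). A small additional point worth spelling out is that $\sigma_B$ commutes with countable direct sums, which is immediate for the maximal balanced tensor product; this is what allows the reduction from all of $\mathcal{P}$ to the generating class $\Cc\I$. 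Once these are checked, uniqueness of the complementary triangle does all the remaining work.
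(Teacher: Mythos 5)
Your proposal is correct and follows essentially the same route as the paper: apply the triangulated functor $-\otimes_{G^0}A$ to the Dirac triangle of $C_0(G^0)$, check that $\mathcal{N}$ is preserved via compatibility of restriction with the balanced tensor product and that $\langle\Cc\I\rangle$ is preserved via Lemma \ref{lem:indcomp}, and conclude by the uniqueness statement in Proposition \ref{prop:csub}. The only (harmless) difference is that you spell out the reduction from $\langle\Cc\I\rangle$ to the generating class $\Cc\I$ using exactness and sum-preservation, which the paper leaves implicit.
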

\begin{proof}
We have already explained that tensorization via the maximal balanced tensor product functor gives a triangulated functor. Hence it maps the canonical exact triangle $P(C_0(G^0))\longrightarrow C_0(G^0)\longrightarrow N(C_0(G^0))$ to an exact triangle $$P(C_0(G^0))\otimes^{\text{}}_{G^0} A\longrightarrow A\longrightarrow N(C_0(G^0))\otimes^{\text{}}_{G^0} A.$$
If we can show that $-\, \otimes^{\text{}}_{G^0} A$ leaves the subcategories $\CI$ and $\mathcal{N}$ invariant, the result follows from the uniqueness statement in Proposition \ref{prop:csub}. Let us begin with the contractible objects: for $B\in \mathcal{N}$, since the restriction functor behaves well with respect to the maximal balanced tensor product, we compute $$\Res_G^Q(\mathrm{id}_{B\otimes_{G^0}^{\text{max}} A})=\Res_G^Q(\mathrm{id}_B)\otimes^{\text{}}_{Q^0}\Res_G^Q(\mathrm{id}_A)= 0,$$ and hence $B\otimes_{G^0}^{\text{max}} A\in \mathcal{N}$.

On the other hand, for every $Q\in \mathcal{F}$ and $B\in\KKK^Q$, Lemma \ref{lem:indcomp} provides $\KKK^G$-equivalences $$\Ind_Q^G(B)\otimes^{\text{}}_{G^0} A\cong\Ind_Q^G(B\otimes^{\text{}}_{Q^0}\Res_G^Q(A))\in \CI.$$
\end{proof}

\begin{defi}\label{def:bc}
We say that $G$ satisfies the \emph{strong} Baum--Connes conjecture (with coefficients in $A$) if the natural map $P(A)\rtimes_r G \to A\rtimes_r G$ is a $\KKK$-equivalence.
\end{defi}

A stronger variant of the formulation above is requiring $P(A)\to A$ to be an isomorphism in $\KKK^G$. However it is known that even the ordinary (weaker) form of the conjecture admits counterexamples \cite{higlafskan:bc}.

We will need the following deep result proved by J.-L. Tu.

\begin{theo}[{\cite{tu:moy}}]\label{thm:tu}
Suppose $G$ is a second countable, locally compact, Hausdorff groupoid. If $G$ acts properly on a continuous field of affine Euclidean spaces, then there exists a proper $G$-$C^*$-algebra $P$ such that $P\cong C_0(G^0)$ in $\KKK^G$. 
\end{theo}
This result has the following immediate consequence:
\begin{coro}
Suppose $G$ is a second countable, locally compact, Hausdorff groupoid. If $G$ admits a proper action on a continuous field of affine Euclidean spaces, then we have the equality of categories $\langle \P r\rangle=\KKK^G$.
\end{coro}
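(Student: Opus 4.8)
The plan is to obtain the statement as a formal consequence of Tu's theorem (Theorem~\ref{thm:tu}), the permanence of properness under balanced tensor products, and the functoriality of $-\otimes_{G^{0}}A$ on $\KKK^G$. Only the inclusion $\KKK^G\subseteq\langle\P r\rangle$ requires an argument, the reverse one being trivial; concretely, one must show that every separable $G$-$C^*$-algebra $A$ lies in the localizing subcategory generated by proper $G$-$C^*$-algebras.

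First I would invoke Theorem~\ref{thm:tu}: under the present hypothesis it produces a proper $G$-$C^*$-algebra $P$ together with a $\KKK^G$-equivalence $P\cong C_0(G^{0})$. Since $\langle\P r\rangle$ is a localizing (hence replete) triangulated subcategory containing $\P r$, it is closed under $\KKK^G$-isomorphisms, so $C_0(G^{0})\cong P$ already lies in $\langle\P r\rangle$.

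Next, for an arbitrary object $A\in\KKK^G$, I would apply the functor $\sigma_A=-\otimes_{G^{0}}A$ --- a well-defined, in fact triangulated, endofunctor of $\KKK^G$, as recalled after Definition~\ref{def:eth} --- to the equivalence $C_0(G^{0})\cong P$; combined with the canonical identification $C_0(G^{0})\otimes_{G^{0}}A\cong A$ this yields a $\KKK^G$-equivalence $A\cong P\otimes_{G^{0}}A$. It then remains to observe that $P\otimes_{G^{0}}A$ is again proper: if $Z$ is a proper $G$-space witnessing the properness of $P$, so that $P$ is a $Z\rtimes G$-algebra via a nondegenerate equivariant structure map $C_0(Z)\to Z\mathcal{M}(P)$, then composing with the canonical map $Z\mathcal{M}(P)\to Z\mathcal{M}(P\otimes_{G^{0}}A)$ makes $P\otimes_{G^{0}}A$ a $Z\rtimes G$-algebra as well. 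Hence $P\otimes_{G^{0}}A\in\P r\subseteq\langle\P r\rangle$, and therefore $A\in\langle\P r\rangle$; since $A$ was arbitrary, $\langle\P r\rangle=\KKK^G$.

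I do not anticipate a real obstacle: the entire mathematical weight sits in Theorem~\ref{thm:tu}, and the rest is formal, the only point deserving a (routine) check being that $\P r$ is stable under $-\otimes_{G^{0}}A$, which is immediate from Definition~\ref{def:proper}. For completeness I would also note the equivalent route through the complementary pair $(\mathcal{P},\mathcal{N})$ of Theorem~\ref{thm:a}: Theorem~\ref{thm:tu} shows $C_0(G^{0})$ is $\KKK^G$-equivalent to a proper, hence cellular, object, so $N(C_0(G^{0}))\cong 0$, and then the identity $N(A)\cong N(C_0(G^{0}))\otimes_{G^{0}}A$ established above forces $N(A)\cong 0$ for every $A$, i.e.\ $\mathcal{P}=\langle\P r\rangle=\KKK^G$.
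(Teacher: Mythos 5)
Your argument is correct and is essentially identical to the paper's own (one-line) proof: tensor the $\KKK^G$-equivalence $P\cong C_0(G^{0})$ from Theorem \ref{thm:tu} with $A$ over $G^{0}$ and observe that $P\otimes_{G^{0}}A$ is proper. The extra details you supply (why $\langle\P r\rangle$ is closed under isomorphism, why properness passes to the balanced tensor product) are exactly the routine checks the paper leaves implicit.
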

\begin{proof}
If $A\in \KKK^G$ is any $G$-$C^*$-algebra, we have that $A\otimes_{G^0} P$ is proper and $\KKK^G$-equivalent to $A$.
\end{proof}
Our next goal is to show that $\CI=\langle \P r\rangle$.
Let us first treat the proper case:
\begin{lemm}\label{lem:propercase}
Let $G$ be a proper étale groupoid. Then $C_0(G^{0})\in \CI\subseteq \KKK^G$.
\end{lemm}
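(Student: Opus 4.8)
The plan is to realise $C_0(G^0)$ as a homotopy colimit of $C^*$-algebras obtained by gluing finitely many ``slices'', each of which is manifestly compactly induced. Since $G$ is proper it acts properly on $Z=G^0$ through the anchor map, so Proposition \ref{prop:prodr} applies with $\rho=\mathrm{id}_{G^0}$: each $z\in G^0$ admits open neighbourhoods $U^\rho\subseteq U$ for which $Q_z:=U\rtimes G^z_z$ is a compact action and $G|_{U^\rho}=U^\rho\rtimes Q_z$. By Remark \ref{rem:indprop} the saturation $V_z:=G\cdot U^\rho$ is a $G$-invariant open set, and for \emph{every} $G$-invariant open $W\subseteq G^0$ the restriction satisfies $C_0(W\cap V_z)\cong\Ind_{Q_z}^G\!\bigl(C_0(W\cap U^\rho)\bigr)$ in $\KKK^G$; in particular $C_0(V_z)\in\Cc\I$. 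The sets $V_z$ cover $G^0$, and since $G^0$ is second countable we extract a countable subcover $\{V_i\}_{i\in\N}$ with associated compact actions $\{Q_i\}$; as $G$ is proper, $G^0$ is a model for $\underline{E}G$, and we take $\F$ to be this countable family (so that $C_0(G^0)=C_0(\underline{E}G)$ is exactly the proper algebra underlying the construction of $\Cc\I$).

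Set $W_n=V_1\cup\dots\cup V_n$, an increasing sequence of $G$-invariant opens with $\bigcup_n W_n=G^0$. I would prove $C_0(W_n)\in\CI$ by induction on $n$, the case $n=1$ being $C_0(W_1)=C_0(V_1)\in\Cc\I$. For the step, the decomposition $W_{n+1}=W_n\cup V_{n+1}$ yields the equivariant Mayer--Vietoris extension
\[
0\longrightarrow C_0(W_n\cap V_{n+1})\longrightarrow C_0(W_n)\oplus C_0(V_{n+1})\longrightarrow C_0(W_{n+1})\longrightarrow 0,
\]
which is $G$-equivariantly semi-split: a completely positive contractive section $f\mapsto(\chi f,(1-\chi)f)$ comes from a $G$-invariant partition of unity $\{\chi,1-\chi\}$ on $W_{n+1}$ subordinate to $\{W_n,V_{n+1}\}$, and such a partition exists because $G$ is proper (average an ordinary subordinate partition of unity against a cutoff function for $G$). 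Hence the sequence is an exact triangle in $\KKK^G$. Its first two terms lie in $\CI$ --- $C_0(W_n)$ by the inductive hypothesis, and both $C_0(V_{n+1})$ and $C_0(W_n\cap V_{n+1})\cong\Ind_{Q_{n+1}}^G\!\bigl(C_0(W_n\cap U^\rho)\bigr)$ by the first paragraph --- so the two-out-of-three property of the triangulated subcategory $\CI$ forces $C_0(W_{n+1})\in\CI$.

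To conclude, note $C_0(G^0)=\varinjlim C_0(W_n)$ along the open ideal inclusions, and this inductive system is admissible: equivariant completely positive contractive maps $C_0(G^0)\to C_0(W_n)$ are given by multiplication by $G$-invariant functions $g_n\in C_b(G^0)$ with $0\le g_n\le 1$, $\overline{\{g_n\neq 0\}}\subseteq W_n$ and $g_n\nearrow 1$, pulled back along $G^0\to G^0/G$ from an exhaustion of the orbit space $G^0/G$, which is locally compact, Hausdorff and $\sigma$-compact because $G$ is proper. Proposition \ref{prop:admlim} then gives $C_0(G^0)\cong\hlim C_0(W_n)$ in $\KKK^G$, and since $\CI$ is localizing and contains every $C_0(W_n)$ we obtain $C_0(G^0)\in\CI$.

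The main obstacle is carrying out this gluing inside $\KKK^G$ rather than merely in $E_G$: it rests on the equivariant semi-splitness of the Mayer--Vietoris extensions and on the admissibility of the exhausting system, and this is exactly where properness of $G$ is used essentially, via $G$-invariant partitions of unity and cutoff functions. As a fallback one can run the whole argument in $E_G$, where every extension is semi-split and every inductive system is admissible, and then transfer the conclusion back to $\KKK^G$ using that $C_0(G^0)$ is $\KKK^G$-nuclear, cf.\ Corollary \ref{coro:kkenuc}.
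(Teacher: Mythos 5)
Your proof is correct, but it takes a genuinely different route from the paper's. You build $C_0(G^0)$ directly inside the localizing subcategory $\CI$: you exhaust $G^0$ by $G$-invariant opens $W_n$ glued from the saturated slices $V_z$ of Remark \ref{rem:indprop}, run an induction using the equivariant Mayer--Vietoris extension, and finish with an admissible inductive limit. The paper instead proves the dual statement $\KKK^G(C_0(G^0),N)=0$ for every weakly contractible $N$ and invokes the complementarity of $(\CI,\mathcal{N})$; to get the needed excision it passes to $E_G$ via Corollary \ref{coro:kkenuc} (using $\KKK^G$-nuclearity of $C_0(G^0)$), where all extensions give long exact sequences, and closes with a Milnor $\varprojlim^1$ argument. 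The crux of the difference is how excision in $\KKK^G$ is handled: the paper's decomposition involves locally closed subsets ($V_1\setminus V_0$), whose associated extensions need not be equivariantly semi-split, which is exactly why the authors retreat to $E$-theory; your Mayer--Vietoris sequences involve only invariant \emph{open} subsets and are equivariantly c.p.c.-split by a $G$-invariant partition of unity (which exists since the orbit space of a proper second countable groupoid is locally compact, Hausdorff and paracompact --- pulling back an ordinary partition of unity from $G^0/G$ is even simpler than your averaging argument). Your approach buys two things: it never leaves $\KKK^G$, and it does not use $\KKK^G$-nuclearity, so it would in fact show $A\in\CI$ for an arbitrary separable $G$-algebra $A$ over the proper groupoid $G$ (the paper's remark after the lemma only covers $\KKK^G$-nuclear coefficients). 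The paper's approach buys robustness: it works wherever equivariant semi-splitness is unavailable. The only points you should spell out more carefully are the existence of the invariant partition of unity and of the invariant exhausting functions $g_n$ (both reduce to the structure of the orbit space), and the fact that your chosen countable family of compact actions is the one defining $\Cc\I$; none of these is a genuine gap.
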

\begin{proof}
We have to show that $\KKK^G(C_0(G^{0}),N)=0$ for every $\I$-contractible object $N\in\KKK^G$. Since $C_0(G^{0})$ is clearly $C_0(G^{0})$-nuclear we have an isomorphism
$$\KKK^G(C_0(G^{0}),N)\cong E_G(C_0(G^{0}),N)$$
by Corollary \ref{coro:kkenuc}. Consequently, we can work in the setting of $G$-equivariant $E$-theory instead. The upshot is that $E$-theory satisfies excision. In particular, since $G$ is proper, it is locally induced by compact actions as is explained in Proposition \ref{prop:prodr}, i.e., we have a countable cover $\mathcal{V}$ of $G^{0}$ by $G$-invariant sets with $$E_G(C_0(V),N)=\KKK^G(C_0(V),N)=0.$$
As a first step we aim to replace $\mathcal{V}$ by an increasing sequence. In order to arrange this we need to show that given $V_0,V_1\in\mathcal{V}$ we have
$$E_G(C_0(V_0\cup V_1),N)=0$$

Let us first observe that $\KKK^G(C_0(V_0\cap V_1),N)=0$. Following Proposition \ref{prop:prodr} we can write $V_i=GU_i$ such that there exist $H_i\in \mathcal{F}$ with $G\times_{H_i}U_i\cong GU_i=V_i$.
Observe that we have $V_0\cap V_1=G(U_0\cap GU_1)$ and that if $g\in G$ satisfies $g(U_0\cap GU_1)\cap (U_0\cap GU_1)\neq \emptyset$, then also $gU_0\cap U_0\neq \emptyset$ and hence by the construction of $H_0$, $g\in H_0$. Thus,
the canonical map 
$$G\times_{H_0} H_0(U_0\cap GU_1) \to G(U_0\cap GU_1)$$
is a homeomorphism as it is the restriction of the homeomorphism $G\times_{H_0} U_0\cong GU_0$.
It follows that $C_0(V_0\cap V_1)\cong C_0(G\times_{H_0} H_0(U_0\cap GU_1))=\Ind_{H_0}^G(C_0(H_0(U_0\cap GU_1))\in \mathcal{CI},$
and hence $E_G(C_0(V_0\cap V_1),N)\cong \KKK^G(C_0(V_0\cap V_1),N)=0$.

The corresponding statement for the union $V_0\cup V_1$ now follows easily from the long exact sequences in $E_G$-theory associated with the short exact sequences:
\begin{align*}
&0\longrightarrow C_0(V_0\cap V_1)\longrightarrow C_0(V_1)\longrightarrow C_0(V_1\smallsetminus V_0)\longrightarrow 0,\\
&0\longrightarrow C_0(V_0)\longrightarrow C_0(V_0\cup V_1)\longrightarrow C_0(V_1\smallsetminus V_0)\longrightarrow 0.
\end{align*}
In each sequence two out of three groups in the induced long exact sequence vanish and hence so does the third. Replacing $V_n$ by $\bigcup_{i=1}^n V_i$ we can assume that $\mathcal{V}=(V_n)_{n\in\N}$ is an increasing sequence. We clearly have $C_0(G^{0})=\varinjlim_n C_0(V_n)$ and since $E$-theory has countable direct sums we have a Milnor $\lim ^1$-sequence (see Lemma \ref{lem:millimone})
$$0\longrightarrow \varprojlim{}^1 E_G(C_0(V_n),\Sigma N)\longrightarrow E_G(C_0(G^{0}),N)\longrightarrow \varprojlim E_G(C_0(V_n), N)\longrightarrow 0$$
Since the left and right terms are both zero, this concludes the proof.
\end{proof}
In the argument above we can replace $C_0(G^{0})$ by any $\KKK^G$-nuclear $G$-algebra $A$. 

\begin{theo}\label{theo:ci=pr}
The localizing subcategory of $\KKK^G$ generated by compactly induced objects equals the one generated by proper objects, i.e., $\CI=\langle \P r\rangle$.
\end{theo}
\begin{proof}

Consider the canonical triangle
\begin{equation}
    P\stackrel{D}{\longrightarrow}C_0(G^{0})\stackrel{\eta}{\longrightarrow}N \longrightarrow \Sigma P,
\end{equation}
and let $p:G\ltimes \E G\rightarrow G$ denote the projection homomorphism. The associated functor $p^*:\KKK^G\rightarrow \KKK^{G\ltimes\E G}$ maps contractible objects to contractible objects.
Indeed, since $\E G$ is a proper $G$-space, a compact action for $G\ltimes\E G$ is just given by the restriction to one of the sets $U^\rho$ as in Proposition \ref{prop:prodr}. Continuing to use the notation from that proposition let $Q$ be the open copy of $\Gamma_z\ltimes U$ inside $G$, a compact action for $G$! Then the compositions of groupoid homomorphisms $(G\ltimes \E G)|_{U^\rho}\hookrightarrow G\ltimes \E G\stackrel{p}{\rightarrow}G$ and $(G\ltimes \E G)|_{U^\rho}\cong (\Gamma_z\ltimes U)\ltimes U^\rho\stackrel{p}{\rightarrow}\Gamma_z\ltimes U\cong Q\hookrightarrow G$ coincide. The resulting commutative diagram of $\KKK$ groups gives $\Res_{(G\ltimes \E G)|_{U^\rho}}(\mathrm{id}_{p^*N})=\Res_{(G\ltimes \E G)|_{U^\rho}}(p^*(\mathrm{id}_N))=p^*(\Res_Q(\mathrm{id}_N))=0$ for any contractible object $N\in \KKK^G$.

Combining this with Lemma \ref{lem:propercase}, we can use the fact that $\CI$ and $\mathcal{N}$ are complementary to conclude that
$p^*(\eta)\in \KK[][G\ltimes \E G]{C_0(\E G)}{p^*N}=0$.

Now let $A\in\KKK^G$ be an arbitrary proper $G$-algebra. As explained before we may assume that $A$ is a $C_0(\E G)$-algebra.
From our observation above it follows that $p^*(\eta)\otimes_{\E G} 1_A=0$.
Since the functors $p^*$ and $\sigma_A$ are both triangulated, we can apply them in this order to obtain a triangle
$$p^*P\otimes_{\E G} A\rightarrow C_0(\E G)\otimes_{\E G}A\rightarrow p^*N\otimes_{\E G}A\rightarrow \Sigma (p^*P\otimes_{\E G} A).$$
Note, that $C_0(\E G)\otimes_{\E G}A\cong A$.
Rotating this triangle gives the triangle

$$p^*N\otimes_{\E G}A\longrightarrow \Sigma (p^*P\otimes_{\E G}A)\longrightarrow \Sigma A\stackrel{0}{\longrightarrow}\Sigma (p^*N\otimes_{\E G}A),$$
in which the last morphism is zero as indicated.
Thus, \cite[Corollary 1.2.7]{nee:tri} implies that the latter triangle splits, namely $\Sigma (p^*P\otimes_{\E G}A)\cong (p^*N\otimes_{\E G}A)\oplus \Sigma A$.

In particular, after suspending once more we obtain a retraction $A\longrightarrow p^*P\otimes_{\E G}A$, i.e. a right inverse of $p^*D\otimes_{\E G} 1_A:p^*P\otimes_{\E G}A\longrightarrow A$.
Now applying the forgetful functor $p_{*}$ gives a retraction $A\longrightarrow p_{*}(p^*P\otimes_{\E G}A)\cong P\otimes_{G^0} A\cong P(A)$. Since $\CI$ is a thick subcategory of $\KKK^G$ it follows that $A\in \CI$.
\end{proof}

\begin{rema}\label{rem:propsub}
In general, we do not know if any object in $\CI$ is equivalent in $\KKK^G$ to a proper $G$-$C^*$-algebra. However, if the cellular approximation $P=P(C_0(G^0))$ happens to be proper (e.g., in the setting of Theorem \ref{thm:tu}) then the previous statement clearly holds, because for any $A\in \CI$, we have that $P\otimes_{G^0} A\cong A$ is a proper $G$-$C^*$-algebra (cf. \cite[Corollary 4.37]{meyereme:dualities} and \cite[Section 7]{meyernest:tri}.)
\end{rema}

The corollary below identifies the localization category in terms of the more classical $\RKK^G$-functor.  Recall a morphism $f\colon A\to B$  in $\KKK^G$ is called a \emph{weak} equivalence if $F(f)$ is an isomorphism, where $F$ is the functor in \eqref{eq:functorF}. For instance, the natural map $D_A\colon P(A)\to A$ is a weak equivalence.

\begin{theo}\label{thm:rkkloc}
Let $p\colon \E G\to G^{0}$ be the moment map underlying the $G$-action. The functor $p^*:\KKK^G\to\RKK^G(\E G)$ is an isomorphism of categories up to localization at $N_\I$. More precisely, the indicated maps in the following commutative diagram are isomorphisms.
\[
\xymatrixcolsep{5em}\xymatrix{ \KKK^G(P(A),B)  \ar[d]_-{p^*}^{\cong} & \KKK^G(A,B) \ar[l]_-{D_A^*}\ar[d]^-{p^*}\\
\RKK^G(\E G; P(A),B) &\RKK^G(\E G; A,B)\ar[l]^-{p^*(D_A)^*}_{\cong}}
\] 
\end{theo}
\begin{proof}
Let us first consider the bottom map. Since $\RKK^G(\E G; -\,,B)$ is a cohomological functor, the claim follows from the inclusion $N_\I \subseteq\mathrm{ker}(p^*)$. If $A$ is weakly contractible, then $p^*(A)$ is both weakly contractible and proper, hence $\KKK^{G\ltimes \E G}(p^*A,p^*A)=0$ by Proposition \ref{prop:csub}. Thus $p^*(A)=0$.

Secondly, let us turn to the vertical map. Both the top and the bottom groups are functorial in the first slot and compatible with direct sums, hence the class of objects for which $p^*$ is an isomorphism is localizing. Thus we can assume $P(A)=\Ind_H^G(D)$ for some compact action $H\subseteq G$. Then, by using the Induction-Restriction adjunction and exchanging $p^*$ and $\Ind_H^G$, we can reduce ourselves to proving that
\begin{equation}\label{eq:pisom}
p^*\colon \KKK^H(D,\Res_G^H(B))\to \RKK^H(U^\rho; D,\Res_G^H(B))
\end{equation}
is an isomorphism (we are using notation from  Equation \eqref{eq:inmor}). The subgroupoid $H$ is a compact action and it satisfies a strong form of the Baum--Connes conjecture, in particular it admits a Dirac-dual-Dirac triple as in \cite[Definition 4.38]{meyereme:dualities}. Then \cite[Theorem 4.34 \& 4.39]{meyereme:dualities} imply that Eq. \eqref{eq:pisom} is an isomorphism. More concretely, if $P^\prime$ a proper $C^*$-algebra which is also the cellular approximation of $C(H^{(0)})$, then the inverse map is given by $[x]\mapsto p_*(P^\prime\otimes_{U^\rho} [x])$ (cf. \cite[Lemma 4.31]{meyereme:dualities}).
\end{proof}

\begin{rema}
The second part of the proof above should be viewed as a statement about the $H$-equivariant ``contractibility'' of $\E G$ (cf. \cite[Theorem 7.1]{nestmeyer:loc} and \cite[Theorem 11.3]{tu:moy}). 
Concerning the map in Eq. \eqref{eq:pisom}, if the $G$-cellular approximation $P$ was $\KKK^G$-equivalent to a proper $C^*$-algebra, then the map $[x]\mapsto p_*(P\otimes_{\E G} [x])$ would provide an inverse already in $\KKK^G$. This holds for many groupoids, as is showed by Theorem \ref{thm:tu}, however by passing to $H$ via the adjunction, we do not need to assume that $P$ is proper in the theorem above.
\end{rema}

The relation to the ordinary Baum--Connes conjecture is explained by means of the following result (compare with \cite[Theorem 6.12]{meyereme:dualities}, see also \cite{nestmeyer:loc} for action groupoids). The left-hand side of the Baum--Connes assembly map (with coefficients in $A$) is often denoted $K_*^{\mathrm{top}}(G;A)$ and is defined as $\varinjlim_{Y \subseteq \E G} \KKK^G(C_0(Y),A)$, the limit ranging over the directed set of $G$-invariant $G$-compact subspaces of $\E G$.

\begin{theo}\label{thm:asweak}
Let $A\in \KKK^G$ be a $G$-$C^*$-algebra and denote by $\mu^G_A$ the associated assembly map. Let $D_A\colon P(A)\to A$ be the natural $\KKK^G$-morphism. The indicated maps in the following commuting diagram are isomorphisms.
\begin{equation}\label{eq:bcmap}
\xymatrixcolsep{3.5em} \xymatrix{ K_*^{\mathrm{top}}(G;A) \ar[d]_-{\cong} \ar[r]^-{\mu^G_A} & K_*(A\rtimes G)\\
K_*^{\mathrm{top}}(G;P(A)) \ar[r]^-{\mu^G_{P(A)}}_{\cong} & K_*(P(A)\rtimes G) \ar[u]_-{D_A\rtimes G} &}
\end{equation}
\end{theo}
\begin{proof}
The functor $K_*^{\mathrm{top}}(G;\,-)$ is homological, it commutes with direct sums, and by the vertical isomorphism in Theorem \ref{thm:rkkloc}, it is functorial for maps in $\RKK^G(\E G; A,B)$ . The same theorem also implies $p^*(D_A)$ is invertible, thus the left map in the diagram above is an isomorphism. Now $\mu^G_{P(A)}$ is an isomorphism if the Baum--Connes conjecture holds for compactly induced coefficient algebras. This is proved in \cite{ce:perma} (see also \cite{ceo:shapiro} and \cite[Theorem 4.48]{meyereme:dualities}). 
\end{proof}

Combining $\CI=\langle \P r\rangle$ and Tu's Theorem \ref{thm:tu}, we obtain the following.

\begin{coro}
Suppose $G$ is a second countable, locally compact, Hausdorff groupoid. Assume that there exists a proper $G$-$C^*$-algebra $P$ such that $P\cong C_0(G^0)$ in $\KKK^G$. Then $G$ satisfies the strong Baum--Connes conjecture with coefficients.
\end{coro}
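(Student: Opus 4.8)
The plan is to show that the hypothesis forces the subcategory $\mathcal{N}$ of weakly contractible objects to collapse to the zero objects; once this is established, the cellular approximation map $D_A\colon P(A)\to A$ is automatically a $\KKK^G$-equivalence for every coefficient algebra $A$, and the strong Baum--Connes conjecture (Definition \ref{def:bc}) follows at once by descent.

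First I would observe that the hypothesis yields $\langle\P r\rangle=\KKK^G$, by the same argument as in the Corollary following Theorem \ref{thm:tu}: for any separable $G$-$C^*$-algebra $A$, the balanced tensor product $A\otimes_{G^{0}}P=\sigma_A(P)$ inherits from $P$ a $C_0(Z)$-structure over a proper $G$-space $Z$ and is hence proper, while $\sigma_A$, being a functor on $\KKK^G$, carries the $\KKK^G$-equivalence $P\cong C_0(G^{0})$ to $A\otimes_{G^{0}}P\cong A\otimes_{G^{0}}C_0(G^{0})\cong A$. Thus every object of $\KKK^G$ is $\KKK^G$-equivalent to a proper one. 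Then I would invoke Theorem \ref{theo:ci=pr}, giving $\mathcal{P}=\CI=\langle\P r\rangle=\KKK^G$.

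Now since $(\mathcal{P},\mathcal{N})$ is a complementary pair, Proposition \ref{prop:csub} describes $\mathcal{N}$ as the objects $N$ with $\KKK^G(P,N)=0$ for all $P\in\mathcal{P}$; as $\mathcal{P}=\KKK^G$, taking $P=N$ forces $\mathrm{id}_N=0$, i.e.\ $N\cong 0$, so $\mathcal{N}$ contains only zero objects. Hence for every $A$ the canonical exact triangle $P(A)\xrightarrow{D_A}A\to N(A)\to\Sigma P(A)$ has $N(A)\cong 0$, and an exact triangle with vanishing third vertex has its first morphism invertible \cite{nee:tri}; therefore $D_A$ is a $\KKK^G$-equivalence. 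Finally, applying the Kasparov descent functor $\jmath^G\colon\KKK^G\to\KKK$ (which, being a functor, preserves isomorphisms) turns $D_A$ into a $\KKK$-equivalence $D_A\rtimes_r G\colon P(A)\rtimes_r G\to A\rtimes_r G$, which is exactly the strong Baum--Connes conjecture with coefficients in $A$; since $A$ was arbitrary, it holds with coefficients. The only point that needs real thought --- and the conceptual content of the corollary --- is the collapse $\mathcal{N}=0$, which rests on having $\langle\P r\rangle=\KKK^G$ together with the nontrivial identification $\CI=\langle\P r\rangle$ of Theorem \ref{theo:ci=pr}; after that everything is formal, and I expect no genuine obstacle.
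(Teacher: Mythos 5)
Your proposal is correct and follows exactly the route the paper intends: tensoring with $P$ shows $\langle\P r\rangle=\KKK^G$, Theorem \ref{theo:ci=pr} upgrades this to $\CI=\KKK^G$, complementarity forces $\mathcal{N}=0$ and hence $D_A$ invertible in $\KKK^G$, and descent finishes. The paper leaves this as an immediate combination of the preceding corollary and Theorem \ref{theo:ci=pr}; you have simply written out the same argument in full.
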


The previous corollary applies in particular to all amenable groupoids and more generally to all \textit{a-T-menable} groupoids (a-T-menability is also known as the \textit{Haagerup property}) by \cite[Proposition~3.8]{tu:moy}. 

The following lemma shows that we can use Theorem \ref{theo:ci=pr} to rephrase the definition of $\mathcal{N}$ as the category of contractible objects with respect to the kernel of the joint restriction functor to all proper open subgroupoids (instead of just the compact actions).
\begin{lemm}\label{lem:contractible objects}
Let $B\in \KKK^G$. Then $B\in \mathcal{N}$ if and only if $\Res_G^H(\mathrm{id}_B)=0$ for all proper open subgroupoids $H\subseteq G$.
\end{lemm}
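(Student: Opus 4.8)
The plan is to prove the two implications separately; the reverse one is immediate, and the forward one will be driven by the induction--restriction adjunction of Theorem~\ref{thm:iradj} together with the identification $\CI=\langle\P r\rangle$ of Theorem~\ref{theo:ci=pr}. Throughout I will use that, by construction of $\mathcal{N}=N_\I$, the statement $B\in\mathcal{N}$ is equivalent to $\Res_G^Q(\mathrm{id}_B)=0$ for every compact action $Q\in\F$. For the ``if'' direction it then suffices to observe that each $Q\in\F$ is itself a proper open subgroupoid of $G$: it is open by Proposition~\ref{prop:prodr} (it is an open copy of $U\rtimes G^z_z$ inside $G|_U$, and $G|_U$ is open in $G$ since $U$ is open in $G^0$), and it is proper because $G^z_z$ is finite, so the anchor map of $U\rtimes G^z_z$ has finite fibres and is proper. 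Hence if $\Res_G^H(\mathrm{id}_B)=0$ for all proper open $H\subseteq G$, this holds in particular for each $Q\in\F$, i.e.\ $B\in\mathcal{N}$.

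For the converse, fix $B\in\mathcal{N}$ and a proper open subgroupoid $H\subseteq G$, and set $D:=\Res_G^H(B)\in\KKK^H$. The main point is to show that $\Ind_H^G(D)$ lies in $\CI=\langle\P r\rangle$. Combining Lemma~\ref{lem:indcomp} with the isomorphism $\Ind_H^G(C_0(H^0))\cong C_0(G_{H^0})\rtimes_{\mathrm{rt}}H$ that comes from Lemma~\ref{lem:indresisom} (applied with $A=C_0(G^0)$), I obtain $\Ind_H^G(D)\cong(C_0(G_{H^0})\rtimes_{\mathrm{rt}}H)\otimes_{G^0}B$, so since tensoring a $C_0(Z)$-algebra over $G^0$ with any $G$-algebra again gives a $C_0(Z)$-algebra it is enough to check that $C_0(G_{H^0})\rtimes_{\mathrm{rt}}H$ is a proper $G$-algebra. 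Here the decisive observation is that a proper open subgroupoid $H$ is automatically closed in $G|_{H^0}$: if $h_\alpha\to g$ in $G$ with $r(g),s(g)\in H^0$, then $(r,s)(h_\alpha)$ is eventually in a compact neighbourhood of $(r(g),s(g))$ in $H^0\times H^0$, so $h_\alpha$ is eventually in a compact subset of $H$ by properness of $(r,s)|_H$, and therefore $g\in H$ since $G$ is Hausdorff. This forces the right translation action of $H$ on $G_{H^0}=s^{-1}(H^0)$ to be free and proper --- the action map $(g,h)\mapsto(g,gh)$ is a homeomorphism onto the closed subset $\{(g_1,g_2):r(g_1)=r(g_2),\ g_1^{-1}g_2\in H\}$ of $G_{H^0}\times G_{H^0}$ --- so $C_0(G_{H^0})\rtimes_{\mathrm{rt}}H$ is a $C_0(G_{H^0}/H)$-algebra, and $G_{H^0}/H$ is a proper $G$-space: the left translation action of $G$ on $G_{H^0}$ is also free and proper and commutes with the right $H$-action, with $G\backslash G_{H^0}\cong H^0$, so $G_{H^0}$ is a Morita equivalence bibundle between $G\ltimes(G_{H^0}/H)$ and $H$, and properness of groupoids is a Morita invariant (see \cite{tu:novikov,murewi:morita}). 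Thus $\Ind_H^G(D)$ is a proper $G$-algebra, hence belongs to $\langle\P r\rangle=\CI$. Now, by Theorem~\ref{thm:iradj} there is a natural isomorphism $\KKK^H(D,\Res_G^H B)\cong\KKK^G(\Ind_H^G(D),B)$, and the right-hand group vanishes because $\Ind_H^G(D)\in\CI$, $B\in\mathcal{N}$, and $(\CI,\mathcal{N})$ are complementary (Theorem~\ref{thm:bccc} and Proposition~\ref{prop:csub}). Taking $D=\Res_G^H(B)$ therefore gives $\mathrm{id}_{\Res_G^H B}=0$, i.e.\ $\Res_G^H(\mathrm{id}_B)=0$.

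I expect the real obstacle to be the first step of the converse, namely that induction along a proper open subgroupoid preserves properness of the coefficient algebra. The delicate issue is that an open subgroupoid need not act properly on $G$ (Remark~\ref{rem:indmodel}), so the hypothesis that $H$ is proper must genuinely be used; the lever is precisely the observation that a proper open subgroupoid is closed in $G|_{H^0}$, which promotes the always-free right $H$-action on $G_{H^0}$ to a proper one and hence yields a locally compact Hausdorff quotient. Pinning down that quotient as a proper $G$-space --- either via the bibundle argument above, or by verifying directly that the anchor map of $G\ltimes(G_{H^0}/H)$ is proper --- is the technical step requiring the most care.
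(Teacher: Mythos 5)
Your proof is correct and follows essentially the same route as the paper: the reverse direction from the fact that each $Q\in\F$ is proper and open, and the forward direction via $\langle\P r\rangle=\CI$, complementarity of $(\CI,\mathcal{N})$, and the induction--restriction adjunction applied to $D=\Res_G^H(B)$. The only difference is that you supply a careful justification (properness of the right $H$-action on $G_{H^0}$ via closedness of $H$ in $G|_{H^0}$, and the bibundle argument identifying $G_{H^0}/H$ as a proper $G$-space) for the step the paper simply asserts, namely that $\Ind_H^G D\in\P r$ whenever $H$ is a proper open subgroupoid.
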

\begin{proof}
Suppose that $B\in\mathcal{N}$. By Theorem \ref{theo:ci=pr} and the fact that $(\CI,\mathcal{N})$ is a pair of complementary subcategories, we get that $\KKK^G(A,B)=0$ for all $A\in \P r$. If $H\subseteq G$ is a proper open subgroupoid, then $\Ind_H^G D\in \P r$ for all $D\in \KKK^H$. Using the induction-restriction adjunction, we get that
$$\KKK^H(D,\Res_G^H B)\cong \KKK^G(\Ind_H^G D,B)=0$$
for all $D\in \KKK^H$. If we apply this to $D=\Res_G^H(B)$ we get, in particular, that $\Res_G^H(\mathrm{id}_B)=\mathrm{id}_{\Res_G^H B}=0$.
The converse follows from the definition of $\mathcal{N}$ and the fact that each $Q\in\mathcal{F}$ is a proper open subgroupoid of $G$.
\end{proof}

\section{Applications}\label{sec:app}

\subsection{The UCT}
The article \cite{bondel:kun} established a connection between the Baum--Connes conjecture for groupoids and the Künneth formula for groupoid crossed products.
Now the Universal Coefficient Theorem (UCT) introduced in \cite{rosscho:kunneth} is formally stronger than the Künneth formula, so philosophically speaking it may not come as a surprise that a similar relation exists between the strong Baum--Connes conjecture and the UCT.
\begin{prop}
Let $(A,G,\alpha)$ be a groupoid dynamical system with $A$ type I. Then $P(A)\rtimes_r G$ satisfies the UCT. If furthermore $G$ satisfies the strong Baum--Connes conjecture, then $A\rtimes_r G$ satisfies the UCT.
\end{prop}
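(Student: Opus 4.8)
\emph{Proof plan.}
The plan is to reduce everything to showing that the reduced crossed product $P(A)\rtimes_r G$ lies in the bootstrap class $\mathcal{B}$. Indeed, every $C^*$-algebra in $\mathcal{B}$ satisfies the UCT by Rosenberg--Schochet \cite{rosscho:kunneth}, which gives the first assertion; and if $G$ satisfies the strong Baum--Connes conjecture with coefficients in $A$, then by Definition~\ref{def:bc} the map $P(A)\rtimes_r G\to A\rtimes_r G$ is a $\KKK$-equivalence, whence $A\rtimes_r G\in\mathcal{B}$ as well, since $\mathcal{B}$ is closed under $\KKK$-equivalence. Throughout I would use that Kasparov descent $\jmath^G$ makes $(-)\rtimes_r G$ into a triangulated functor $\KKK^G\to\KKK$ — it carries semisplit extensions to semisplit extensions and commutes with suspensions — which moreover commutes with countable direct sums, and that $\mathcal{B}\subseteq\KKK$ is a localizing subcategory containing every type I $C^*$-algebra.

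Next I would recall how $P(A)$ is built. Put $N_0=A$; given $N_n$, let $P_n=F^\dagger F(N_n)=\bigoplus_{Q\in\mathcal{F}}\Ind_Q^G(\Res_G^Q(N_n))$ with the $\mathcal{I}$-epic morphism $\delta_n\colon P_n\to N_n$ coming from the counits of the adjunctions (Theorem~\ref{thm:bccc} and its proof), and let $N_{n+1}\simeq\Sigma^{-1}\Cone(\delta_n)$ complete this to an exact triangle $N_{n+1}\to P_n\xrightarrow{\delta_n}N_n\to\Sigma N_{n+1}$. Then $P(A)$ is a homotopy colimit of objects obtained from finitely many of the $P_n$ by exact triangles, so it lies in the localizing subcategory of $\KKK^G$ generated by $\{P_n\}_{n\ge 0}$. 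Since $(-)\rtimes_r G$ is triangulated and commutes with countable direct sums (hence with homotopy colimits), it is enough to prove $P_n\rtimes_r G\in\mathcal{B}$ for every $n$.

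This would follow from two facts. \textbf{(i)} Every $N_n$ is isomorphic in $\KKK^G$ to a type I $G$-$C^*$-algebra. This holds for $N_0=A$. Assuming it for $N_n$: for a compact action $Q=U\rtimes\Gamma_Q\subseteq G$ with $\Gamma_Q$ finite, and $D:=\Res_G^Q(N_n)$ type I, the algebra $\Ind_Q^G(D)=\bigl(C_0(G_U)\otimes_U D\bigr)\rtimes\Gamma_Q$ is type I, because $C_0(G_U)\otimes_U D$ is a quotient of the type I algebra $C_0(G_U)\otimes D$ ($D$ is type I, $C_0(G_U)$ is abelian), and a type I algebra crossed by a finite group is type I; hence $P_n$ is type I. Then $\delta_n$ is a $\KKK^G$-morphism between type I algebras, so after replacing its source and target by $\KKK^G$-equivalent type I algebras it is represented by a genuine $*$-homomorphism of type I $G$-$C^*$-algebras (this is the extension-triangle argument underlying the triangulated structure on $\KKK^G$; cf.\ \cite[Lemmas~A.3.2 and~A.3.4]{laff:kkban}), whose mapping cone is type I; thus $N_{n+1}\simeq\Sigma^{-1}\Cone(\delta_n)$ is type I. \textbf{(ii)} For a compact action $Q=U\rtimes\Gamma_Q$ and a $Q$-$C^*$-algebra $D$ one has $\Ind_Q^G(D)\rtimes_r G\sim_M D\rtimes_r Q=D\rtimes\Gamma_Q$: exchanging the two commuting crossed products in $\Ind_Q^G(D)\rtimes_r G=\bigl((C_0(G_U)\otimes_U D)\rtimes_{\mathrm{rt}\otimes\beta}Q\bigr)\rtimes_{\mathrm{lt}\otimes\mathrm{id}}G$, the inner one is $(C_0(G_U)\rtimes_{\mathrm{lt}}G)\otimes_U D$, and since $G$ acts freely and properly on $G_U$ by left translation with quotient $U$, this is $Q$-equivariantly Morita equivalent over $U$ to $D$; applying $\rtimes_r Q$ gives the claim (a form of Green's imprimitivity theorem; cf.\ \cite{murewi:morita,brown:proper}).

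Combining the two facts, $P_n\rtimes_r G=\bigoplus_{Q\in\mathcal{F}}\bigl(\Ind_Q^G(\Res_G^Q(N_n))\rtimes_r G\bigr)$ is $\KKK$-equivalent to $\bigoplus_{Q\in\mathcal{F}}\bigl(\Res_G^Q(N_n)\rtimes\Gamma_Q\bigr)$, each summand being a type I algebra crossed by a finite group, hence type I, hence in $\mathcal{B}$; as $\mathcal{B}$ is closed under countable direct sums, $P_n\rtimes_r G\in\mathcal{B}$, so $P(A)\rtimes_r G\in\mathcal{B}$, as required. The step I expect to be most delicate is fact \textbf{(ii)}: the groupoid imprimitivity/Morita equivalence $\Ind_Q^G(D)\rtimes_r G\sim_M D\rtimes_r Q$ for an \emph{open} subgroupoid with \emph{reduced} crossed products requires care (in particular the equivariance bookkeeping), and in fact \textbf{(i)} one must be careful to propagate type I-ness through the repeated passages to $\KKK^G$-equivalent algebras and through $\Res_G^Q$, $\Ind_Q^G$, and the mapping-cone construction.
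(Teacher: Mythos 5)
Your argument is correct and rests on the same three pillars as the paper's proof: the bootstrap class $\mathcal{B}$ is localizing and contains all type I algebras; a type I algebra crossed by a proper groupoid is again type I (the paper quotes Tu's Proposition~10.3 for this, which covers your finite-group and compact-action cases at once); and the Morita equivalence $\Ind_Q^G(D)\rtimes_r G\sim_M D\rtimes Q$, which the paper asserts without proof exactly as you flag it as the delicate point. Where you genuinely add something is in your fact \textbf{(i)}. The paper's proof simply declares that $P(A)\rtimes_r G$ lies in the localizing subcategory of $\KKK$ generated by the algebras $\Ind_H^G(A)\rtimes_r G$ with $A$ the original type~I coefficient, but the cellular approximation is built from the phantom tower, whose building blocks are $\Ind_Q^G(\Res_G^Q(N_n))$ for the iterated cones $N_n$, not for $A$ itself; since $\langle\Cc\I\rangle$ is generated by compactly induced objects with \emph{arbitrary} coefficients, and $D\rtimes Q$ need not lie in $\mathcal{B}$ for arbitrary $D$, one really does need to know that the relevant coefficients stay type I (or at least bootstrap after crossing by $Q$). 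Your induction through the tower --- using that type I algebras are closed under ideals, quotients, balanced tensor products with commutative algebras, hereditary subalgebras/stabilization, extensions, and hence under the extension-triangle realization of a $\KKK^G$-morphism and its mapping cone --- supplies precisely this missing justification. So your proof is not a different route but a more careful execution of the paper's route; the only caveat is that the two steps you yourself single out (the type I bookkeeping through the cone construction, and the imprimitivity isomorphism for an open subgroupoid with reduced crossed products) are stated as sketches and would need to be written out, though both are sound.
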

\begin{proof}
If $A$ is a type I $\mathrm{C}^*$-algebra and $H$ is a proper groupoid, the crossed product $A\rtimes H$ is type I by \cite[Proposition~10.3]{tu:moy}. Given $A$ as in the claim, and $H\subseteq G$ a proper open subgroupoid acting on $A$, then $C_0(G_{H^0})\otimes A$ is type I, $C_0(G_{H^0})\otimes_{H^0} A$ is type I (because it is a quotient), and $L_H(A):= \Ind_H^G\Res_G^H(A)$ is type I as well. Hence $L_H(A)$ belongs to the bootstrap class. Since $L_H(A)\rtimes_r G$ is Morita equivalent to  $A\rtimes_r H$ and $P(A)\rtimes_r G$ belongs to the localising subcategory of $\KKK$ generated by
\[
\{ L_{H_1}\cdots L_{H_n}(A)\rtimes G \mid n\in \N, H_i\subseteq G \text{ proper and open}\},
\]
it follows that $P(A)\rtimes_r G$ belongs to the bootstrap class as well. 

Since the bootstrap class is closed under $\KKK$-equivalence, the strong Baum--Connes conjecture yields the result.
\end{proof}
We do in particular obtain the following corollary, generalising \cite{bali:CartanUCT, kls:haagerup}. To state it, recall that a twist over $G$ is a central extension
$$G^0\times\mathbb{T}\rightarrow \Sigma\stackrel{j}{\rightarrow} G,$$
and that one can associate the twisted groupoid $C^*$-algebra $C_r^*(G,\Sigma)$ to this data (see \cite{ren:cartan} for the details of this construction).
\begin{coro}
Let $\Sigma$ be a twist over an étale groupoid $G$. If $G$ satisfies the strong Baum--Connes conjecture, then $C_r^*(G,\Sigma)$ satisfies the UCT.
\end{coro}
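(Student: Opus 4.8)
The plan is to realise $C_r^*(G,\Sigma)$, up to stable isomorphism, as an \emph{untwisted} reduced crossed product of a type I $G$-$C^*$-algebra, and then to apply the preceding Proposition. The starting observation is that the restriction of the twist $\Sigma$ to the unit space is canonically the trivial twist $G^0\times\mathbb T$, so that all the twisting data of $\Sigma$ is encoded in the $G$-action rather than in the fibres over $G^0$. Because of this, the groupoid form of the Packer--Raeburn stabilisation trick applies to the reduced twisted crossed product $C_r^*(G,\Sigma)=C_0(G^0)\rtimes_{r,(\mathrm{id},\Sigma)}G$: there is an \emph{ordinary} action $\beta$ of $G$ on $A_\Sigma:=C_0(G^0)\otimes\mathcal K$ together with a $*$-isomorphism
\[
A_\Sigma\rtimes_{r,\beta}G\;\cong\;C_r^*(G,\Sigma)\otimes\mathcal K .
\]
Equivalently, in the language of the Brauer group of $G$, the class of $\Sigma$ is represented by a $G$-equivariant continuous-trace algebra over $G^0$; since the underlying Dixmier--Douady invariant over $G^0$ vanishes, this algebra may be taken to be the constant field $C_0(G^0)\otimes\mathcal K$.

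Granting this, the rest is immediate. The algebra $A_\Sigma=C_0(G^0)\otimes\mathcal K$ is of continuous trace, hence type I, and it is a genuine $G$-$C^*$-algebra via $\beta$. Applying the preceding Proposition to the groupoid dynamical system $(A_\Sigma,G,\beta)$ and using that $G$ satisfies the strong Baum--Connes conjecture, we conclude that $A_\Sigma\rtimes_r G$ satisfies the UCT.

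Finally, membership in the bootstrap class --- equivalently, the UCT --- is invariant under $\KKK$-equivalence, in particular under stabilisation and Morita equivalence. From the displayed isomorphism, $C_r^*(G,\Sigma)\otimes\mathcal K$ satisfies the UCT, and $C_r^*(G,\Sigma)$ is $\KKK$-equivalent to $C_r^*(G,\Sigma)\otimes\mathcal K$; hence $C_r^*(G,\Sigma)$ satisfies the UCT.

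The only non-formal ingredient is the untwisting step: one needs that a twist over a second countable locally compact Hausdorff étale groupoid can, after stabilising, always be absorbed into an ordinary $G$-action on a type I $C_0(G^0)$-algebra. This is precisely where the standing hypotheses on $G$ and the structure theory of twisted groupoid crossed products enter, and I expect it to be the main obstacle; everything downstream is a direct appeal to the preceding Proposition together with the stability of the UCT.
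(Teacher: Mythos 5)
Your proposal is correct and follows essentially the same route as the paper: the paper also invokes the stabilisation trick (citing \cite{erpwill}) to replace $C_r^*(G,\Sigma)$ up to Morita equivalence by an untwisted crossed product $K(H)\rtimes_r G$ with $K(H)$ the compacts on a suitable Hilbert $C_0(G^0)$-module, and then applies the preceding Proposition since $K(H)$ is type I. The only cosmetic difference is that you insist the stabilised coefficient algebra can be taken to be the constant field $C_0(G^0)\otimes\mathcal{K}$, which is unnecessary --- any $K(H)$ is already of continuous trace, hence type I, so the argument goes through without identifying it with the trivial field.
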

\begin{proof}
Apply the stabilisation trick \cite[Proposition~5.1]{erpwill} to replace $C_r^*(G,\Sigma)$ up to Morita-equivalence by $K(H)\rtimes_r G$, where $K(H)$ denotes the algebra of compact operators on a suitable Hilbert $C_0(G^0)$-module. As $K(H)$ is type I, the previous proposition applies.
\end{proof}

\subsection{The Going-Down principle}

We generalize some results obtained by the first author for ample groupoids \cite{bon:goingdownbc} to the general étale case.

\begin{theo}\label{thm:cigdp}
Suppose there is an element $f\in \KKK^G(A,B)$ such that 
\[ 
\KKK^H(D,\Res_G^H(A))\xrightarrow{-\,\hot \Res_G^H(f)\,}\KKK^H(D, \Res_G^H(B))
\]
is an isomorphism for all $H\in \F$ and separable $H$-$C^*$-algebras $D$. Then $f$ is a weak equivalence, and in particular the Kasparov product induces an isomorphism
\begin{equation}\label{eq:cons}
 - \,\hot_A\, f:K_*^{\mathrm{top}}(G;A)\rightarrow K_*^{\mathrm{top}}(G;B).
\end{equation}
\end{theo}
\begin{proof}
Using the Induction-Restriction adjunction the hypothesis is equivalent to the following map being an isomorphism for any $\tilde{D}\in \Cc\I$,
\begin{equation*}
\KKK^G(\tilde{D}, A)\xrightarrow{-\,\hot f\,}\KKK^G(\tilde{D}, B).
\end{equation*}

Applying the functor $\KKK^G(\tilde{D}, -)$ to a mapping cone triangle for $f$, and using the Five Lemma we deduce that $\KKK^G(\tilde{D}, \Cone(f))\cong 0$ for all $\tilde{D}$ in $\CI$. Now by Theorem \ref{prop:csub} we get $\Cone(f)\in N_\I$. The rest follows from Theorems \ref{thm:rkkloc} and \ref{thm:asweak}.
\end{proof}

If we are only interested in studying the assembly map, then we might want to prove Equation \eqref{eq:cons} without necessarily proving that $A$ and $B$ have isomorphic cellular approximations. The following result is a version of the previous one ``after $K_*(-\rtimes G)$'', and it can be proved with slightly weaker assumptions. 

\begin{theo}[{\normalfont cf. \cite[Theorem 7.10]{bon:goingdownbc}}] \label{thm:GD}
    Let $f\in \mathrm{KK}^G(A_1,A_2)$ be an element such that the induced map
    $$K_*(\jmath_H(\mathrm{Res}^H_G(f))):K_*(\mathrm{Res}^H_G(A_1)\rtimes H)\rightarrow K_*(\mathrm{Res}^H_G(A_2)\rtimes H)$$
    is an isomorphism for all proper open subgroupoids $H\subseteq Q$ for all $Q\in \mathcal{F}$.
    Then $$K_*(\jmath_G(P(f))):K_*(P(A_1)\rtimes_r G)\rightarrow K_*(P(A_2)\rtimes_r G)$$
    is an isomorphism.
\end{theo}
The proof requires some preparation.
For a subgroupoid $H\subseteq G$ let $L_H:= \mathrm{Ind}_H^G\circ \mathrm{Res}_G^H$. Consider the class $\mathcal{P}_0$ of $G$-algebras of the form $(L_{H_n}\circ\cdots\circ L_{H_1})(C_0(G^0))$ for $n\in \mathbb N$ and $H_i\in \mathcal{F}$.

\begin{lemm}
    $P(C_0(G^0))\in \langle \mathcal{P}_0\rangle$.
\end{lemm}
\begin{proof}
    By \cite[Proposition~3.18]{meyer:tri} the $\mathcal{CI}$-cellular approximation $P(C_0(G^0))$ can be computed as the homotopy limit of a phantom castle over $C_0(G^0)$. Hence it is enough to show that such a phantom castle can be found inside $\langle \mathcal{P}_0\rangle$.
    Using the fact that $\langle \mathcal{P}_0\rangle$ is localising, an inspection of the construction of such a phantom castle in \cite{meyer:tri} shows that it suffices to show that $C_0(G^0)$ admits a projective resolution by objects in $\langle \mathcal{P}_0\rangle$. The standard way to construct such a projective resolution is by considering the algebras $(F^\dagger\circ F)^n(C_0(G^0))$ for $n\geq 1$. 
    
    We will prove that this resolution is contained in $\langle \mathcal{P}_0\rangle$ by induction. First, we have $(F^\dagger\circ F)(C_0(G^0))=\bigoplus_{H\in \mathcal{F}} \mathrm{Ind}_H^G \mathrm{Res}_G^H C_0(G^0)\in \langle \mathcal{P}_0\rangle$. 
    Assuming now that the claim holds for $n-1$, we compute
    $$(F^\dagger\circ F)^n(C_0(G^0))=\bigoplus_{H\in \mathcal{F}} \mathrm{Ind}_H \mathrm{Res}_H((F^\dagger\circ F)^{n-1}(C_0(G^0))),$$
    and the latter is contained in $\langle \mathcal{P}_0\rangle$ since $L_H(\langle \mathcal{P}_0\rangle)\subseteq \langle \mathcal{P}_0\rangle$ (we have $L_H(\mathcal{P}_0)\subseteq \mathcal{P}_0$ by definition of $\mathcal{P}_0$ and hence the general statement follows from the fact that $L_H$ is triangulated and compatible with direct sums).
\end{proof}

\begin{proof}[Proof of Theorem \ref{thm:GD}]
We will show that
\begin{equation}\label{eq:gdeq}
    K_*((\jmath_G(\mathrm{id}_B\otimes_{G^0} f))):K_*((B\otimes_{G^0} A_1)\rtimes G)\rightarrow K_*((B\otimes_{G^0} A_2)\rtimes G)
\end{equation}
is an isomorphism for all $B\in \mathcal{P}_0$. Once this is proven, we can complete the proof as follows:
since $K$-theory is a homological functor (compatible with direct sums), these isomorphisms imply that \eqref{eq:gdeq} is also an isomorphism for $B\in \langle \mathcal{P}_0\rangle$ by a routine argument involving the Five Lemma. 

In particular, we can take $B=P(C_0(G^0))$ by the previous Lemma. Noting further that $P(A)\rtimes G\cong P(A)\rtimes_r G$ in $\mathrm{KK}$, the proof will be complete.
Thus, in what follows we show that \eqref{eq:gdeq} is an isomorphism for all $B\in \mathcal{P}_0$.

\noindent\textbf{Step 1:} 
We will first prove that \eqref{eq:gdeq} is an isomorphism for $B=L_H(C_0(G^0))=C_0(G/H)$ whenever $H\subseteq Q$ for some $Q\in \mathcal{F}$.. In this case we have natural $G$-equivariant isomorphisms
$$B\otimes_{G^0}A_i\cong \mathrm{Ind}_H^G(C_0(H^0))\otimes_{G^0} A_i\cong \mathrm{Ind}_H^G(\mathrm{Res}_G^H(A_i))$$
and hence $(B\otimes_{G^0} A_i)\rtimes G$ is Morita equivalent to $\mathrm{Res}_G^H(A_i)\rtimes H$. Thus, this case follows directly from the assumption.

\noindent\textbf{Step 2:} Suppose $B=L_K(C_0(X))=\mathrm{Ind}_K^G C_0(X|_{K^0})$, where $X$ is any second countable proper \'etale $G$-space with anchor map $p:X\rightarrow G^0$, and $K\in \mathcal{F}$. We claim that $\eqref{eq:gdeq}$ is an isomorphism for this choice of $B$.
 Let $\mathcal{B}$ be a countable basis for the topology of $X|_{K^0}$ consisting of open subsets of $X|_{K^0}$ on which $p$ restricts to a homeomorphism. Then we can write 
    $$X|_{K^0}=\bigcup_{S\in \mathcal{B}} KS.$$
    Since $\mathcal{B}$ is countable we may enumerate its elements writing $\mathcal{B}=\{S_n\mid n\in \mathbb N\}$. Let $X_n:=\bigcup_{i=1}^n KS_n$. Then $X_n$ is an open $K$-invariant subset of $X$. Moreover, $C_0(X|_{K^0})=\varinjlim_n C_0(X_n)$ where the connecting maps are just given by the canonical inclusions. Since the induction functor, tensor products and the maximal crossed product as well as $K$-theory are all compatible with inductive limits, it suffices to show that \eqref{eq:gdeq} is an isomorphism for $B=\mathrm{Ind}_K^G C_0(X_n)$.
    We will do this by induction on $n$. 
    
    For $n=1$ observe that for every $S\in \mathcal{B}$ there are identifications $KS\cong K\times_{\mathrm{Stab}(S)} S$ where $\mathrm{Stab}(S)$ is the proper open subgroupoid of $K$ defined as $\mathrm{Stab}(S)=\{g\in K\mid gS\subseteq S\}$. Note that the restriction of the anchor map induces a homeomorphism $S\cong \mathrm{Stab}(S)^0$. It follows that
    $$C_0(KS)\cong C_0(K\times_{\mathrm{Stab}(S)} S)\cong \mathrm{Ind}_{\mathrm{Stab}(S)}^K(C_0(\mathrm{Stab}(S)^0)).$$
    and using induction in stages we conclude that 
    $$
    \mathrm{Ind}_K^G C_0(KS)=\mathrm{Ind}_{\mathrm{Stab}(S)}^G C_0(\mathrm{Stab}(S)^0)=C_0(G/\mathrm{Stab}(S)).$$
    Since $\mathrm{Stab}(S)$ is a proper open subgroupoid of $K\in \mathcal{F}$, it follows that \eqref{eq:gdeq} is an isomorphism for $B=\mathrm{Ind}_K^G C_0(KS)$ by Step 1 above.
    
    Next, consider a union $KS\cup KT$ for $S,T\in \mathcal{B}$.
    Then we have two short exact sequences of $K$-algebras
    $$0\rightarrow C_0(KS\cap KT)\rightarrow  C_0(KS)\rightarrow  C_0(KS\setminus KT)\rightarrow 0$$
    and
    $$0\rightarrow C_0(KT)\rightarrow  C_0(KS\cup KT)\rightarrow  C_0(KS\setminus KT)\rightarrow 0$$
Using that the functors $\mathrm{Ind}_K^G -$, $(-\, \otimes_{G^0} A_i)$, and $(-\,\rtimes G)$ are all exact, we can apply them (in this order) to the above sequences and the result remains exact. Hence we obtain induced six-term exact sequences in $K$-theory, which can be compared using the maps induced by $f$.
Thus, using the case $n=1$ above, to prove the claim for the union $KS\cup KT$ for $S,T\in \mathcal{B}$, it suffices to prove it for $KS\cap KT$.
To this end note that
$$KS\cap KT=K(S\cap KT).$$
Considering the subgroupoid $\mathrm{Stab}(S\cap KT)$ of $K$ defined as above we can employ the same arguments as in the case $n=1$ to conclude that $C_0(KS\cap KT))\cong \mathrm{Ind}_{\mathrm{Stab}(S\cap KT)}^K(C_0(\mathrm{Stab}(S\cap KT)^0))$, and hence using induction in stages again, we conclude that \eqref{eq:gdeq} is an isomorphism for
$$B=\mathrm{Ind}_K^G C_0(KS\cap KT)=\mathrm{Ind}_{\mathrm{Stab}(S\cap KT)}^G C_0(\mathrm{Stab}(S\cap KT)^0)\cong C_0(G/\mathrm{Stab}(S\cap KT)).$$

Inductively, we can continue in this way to prove the isomorphism in line \eqref{eq:gdeq} for all $B=\mathrm{Ind}_K^G C_0(X_n)$ and hence complete step 2 by passing to the inductive limit.

\noindent\textbf{Step 3:}
We can now prove that \eqref{eq:gdeq} is an isomorphism for all $B\in \mathcal{P}_0$ by induction. The base case is contained in Step 1 above. For the induction step note that $L_{H_n}\cdots L_{H_1}(C_0(G^0))\cong C_0(G/H_n\times_{G^0}\ldots\times_{G^0}G/H_1)$ and observe that the space $X:=G/H_n\times_{G^0}\ldots\times_{G^0}G/H_1$ is an \'etale proper $G$-space. Thus we can just apply Step 2 to complete the proof.
\end{proof}

This result directly allows to generalize several results obtained by the first author for ample groupoids to the general étale case.
\subsubsection{Homotopies of twists}
Let $G$ be an étale groupoid. A \emph{homotopy of twists} is a twist over $G\times [0,1]$, i.e. a central extension of the form
$$G^0\times [0,1]\times\mathbb{T}\rightarrow \Sigma\stackrel{j}{\rightarrow} G\times [0,1].$$

\begin{theo}
Let $G$ be a second countable étale groupoid satisfying the Baum--Connes conjecture with coefficients. If $\Sigma$ is a homotopy of twists over $G$, then for each $t\in[0,1]$ the canonical map $q_t:C_r^*(G\times [0,1],\Sigma)\rightarrow C_r^*(G,\Sigma_t)$ induces an isomorphism in K-theory.
\end{theo}
\begin{proof}
The idea of the proof is the same as for the main result in \cite{B21}: using a groupoid version of the Packer-Raeburn stabilisation trick and the Going-Down principle (Theorem \ref{thm:GD}) one only has to prove the result for all proper open subgroupoids of all elements $H\in \mathcal{F}$ in place of $G$. Recall that all the groupoids $H\in \mathcal{F}$ are (isomorphic to) transformation groupoids of finite groups. Hence, if the original homotopy of twists over $G$ is topologically trivial in the sense that the map $j$ has a continuous section (this means that the twist is equivalent to a continuous $2$-cocycle), one can apply an earlier result of Gillaspy \cite{gill:twist} to finish the proof. 
In the setting of ample groupoids treated in \cite{B21} the requirement that the twist is topologically trivial is not actually a restriction by \cite[Proposition~4.2]{B21}.

In the \'etale setting twists are no longer automatically topologically trivial, so instead we use a refinement of the Going-Down principle. Observe that the constructions and results from the previous section allow some flexibility in choosing the family $\mathcal{F}$ of subgroupoids of $G$. Indeed, if $\mathcal{F}'$ is another family of subgroupoids of $G$ with the property that every proper action of $G$ is locally induced by members of $\mathcal{F}'$, we can replace $\mathcal{F}$ by $\mathcal{F}'$ in all the results of Section 3 and hence also in Theorem \ref{thm:cigdp}.

Now given a homotopy of twists with quotient map $j: \Sigma\rightarrow G\times [0,1]$ we claim that there exists a family $\mathcal{F}'$ of compact actions for $G$ as above with the additional property that the restricted twist $j^{-1}(H\times [0,1])\rightarrow H\times [0,1]$ (this is now a homotopy of twists over $H$) admits a continuous cross section. 

Let us explain how this works: by the proof of \cite[Proposition~4.2]{B21} every $g\in G$ admits an open neighbourhood $V$ such that there exists a local section $V\times [0,1]\rightarrow \Sigma$ of $j$. Now given a proper action of $G$ we will proceed as in the proof of Proposition \ref{prop:prodr}, but (in the notation of that proof) we additionally choose the bisections $W_g$ to be the domains of local sections of $j$ as above. 
Since the $W_g$ can be assumed to be pairwise disjoint and the remaining construction in the proof of Proposition \ref{prop:prodr} just shrinks them further, we can patch the resulting finitely many local sections $W_g\times [0,1]\rightarrow \Sigma$ together to obtain the desired continuous section $H\times [0,1]\rightarrow \Sigma$.
Since $H$ is of the form $\Gamma\ltimes U$ for a finite group $\Gamma$ and an open subset $U\subseteq G^0$ we are again the position to apply Gillaspy's result to conclude that $q_t$ induces an isomorphism for all $H\in \mathcal{F}'$.
To lift the result from this to all of $G$ one can follow the arguments in \cite{B21} again.
\end{proof}
\subsection{Amenability at infinity}
Recall that a locally compact Hausdorff groupoid $G$ is called \textit{amenable at infinity}, if there exists a $G$-space $Y$ with proper momentum map $p:Y\rightarrow G^{0}$, and such that $G\ltimes Y$ is (topologically) amenable. 

It is called \textit{strongly amenable at infinity} if in addition, the momentum map $p$ admits a continuous cross section. Since $p$ is a proper map, it induces an equivariant $\ast$-homomorphism $C_0(G^0)\rightarrow C_0(Y)$ and can hence be viewed as a morphism
$$\mathbf{p}\in \KKK^G(C_0(G^{0}),C_0(Y)).$$
It was shown in \cite[Lemma~4.9]{ad:exact} that if $G$ is strongly amenable at infinity, then the space $Y$ witnessing this can be chosen second countable. Replacing this space further by the space of probability measures on $Y$ supported in fibres we may also assume that each fibre (with respect to $p$) is a convex space and that $G$ acts by affine transformations. The following result is \cite[Proposition~8.2]{bon:goingdownbc}:
\begin{prop}\label{prop:aaiproper}
Let $G$ be a second countable \'etale groupoid and let $Y$ be a fibrewise convex space on which $G$ acts by affine transformations. Suppose further that the anchor map $p:Y\rightarrow G^{0}$ admits a continuous cross section. If $H\subseteq G$ is a proper open subgroupoid, then the restriction of $p$ to $p^{-1}(H^{0})$ is an $H$-equivariant homotopy equivalence.
In particular, $\Res_G^H(\mathbf{p})\in \KKK^H(C_0(H^{0}),C_0(p^{-1}(H^{0}))$ is invertible.
\end{prop}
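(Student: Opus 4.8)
The plan is to produce an $H$-equivariant continuous section of $p$ over $H^{0}$ and then use the fibrewise convex structure to build an $H$-equivariant homotopy, following the proof of \cite[Proposition~8.2]{bon:goingdownbc}. Write $Y_{H^{0}}=p^{-1}(H^{0})$ and $q=p|_{Y_{H^{0}}}\colon Y_{H^{0}}\to H^{0}$; since $p$ is proper, so is $q$, and restricting the given section yields a continuous (but a priori non-equivariant) section $\sigma_{0}\colon H^{0}\to Y_{H^{0}}$ of $q$. Note $Y_{H^{0}}$ is $H$-invariant and, with the restricted action, is again a fibrewise convex $H$-space on which $H$ acts by affine maps, and that $\Res_{G}^{H}(\mathbf{p})$ is the class of $q^{*}\colon C_{0}(H^{0})\to C_{0}(Y_{H^{0}})$.

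First I would average $\sigma_{0}$ over $H$. As an open subgroupoid of the étale groupoid $G$, the groupoid $H$ is itself étale, and it is proper by hypothesis, so its orbit space $H^{0}/H$ is second countable locally compact Hausdorff (hence paracompact); consequently $H$ admits a cutoff function, i.e. a continuous $c\in C_{c}(H^{0})$ with $\sum_{h\in H^{x}}c(s(h))=1$ for every $x\in H^{0}$, the sum being locally finite. For $x\in H^{0}$ set
\[
\tau(x)=\sum_{h\in H^{x}}c(s(h))\,\bigl(h\cdot\sigma_{0}(s(h))\bigr)\in Y_{x}.
\]
The sum is finite (properness of the anchor map of $H$ together with discreteness of the fibres forces $\{h\in H^{x}:s(h)\in\supp c\}$ to be finite) and is a convex combination in the convex space $Y_{x}$, so $\tau$ is a well-defined continuous map $H^{0}\to Y_{H^{0}}$ with $q\circ\tau=\mathrm{id}_{H^{0}}$. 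It is $H$-equivariant: for $h_{0}\in H$ with $s(h_{0})=x$ and $r(h_{0})=x'$, using that the action is affine and that $h\mapsto h_{0}h$ is a source-preserving bijection $H^{x}\to H^{x'}$ one gets $h_{0}\cdot\tau(x)=\sum_{h'\in H^{x'}}c(s(h'))\,(h'\cdot\sigma_{0}(s(h')))=\tau(x')$.

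Finally I would exhibit $\tau$ as an $H$-equivariant homotopy inverse of $q$. Since $q\circ\tau=\mathrm{id}$, it remains to connect $\tau\circ q$ to $\mathrm{id}_{Y_{H^{0}}}$. Define $F\colon Y_{H^{0}}\times[0,1]\to Y_{H^{0}}$ by $F(y,t)=(1-t)\,\tau(p(y))+t\,y$, the convex combination taken in the fibre $Y_{p(y)}$; then $F$ is continuous, $F(\cdot,0)=\tau\circ q$, $F(\cdot,1)=\mathrm{id}$, and $F$ is $H$-equivariant because the action is affine and $\tau$ is equivariant. Crucially $F$ is proper: since $q\circ F=q\circ\mathrm{pr}_{1}$ (the convex combination defining $F(y,t)$ lies in the single fibre $Y_{p(y)}$), for compact $K\subseteq Y_{H^{0}}$ we have $F^{-1}(K)\subseteq q^{-1}(q(K))\times[0,1]$, which is compact. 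Hence $q$ and $\tau$ are mutually inverse $H$-equivariant homotopy equivalences; applying $C_{0}(-)$ (each of $q$, $\tau$, $F$ being proper) turns this into an $H$-equivariant homotopy equivalence of $C^{*}$-algebras between $\Res_{G}^{H}(\mathbf{p})=[q^{*}]$ and $[\tau^{*}]$, and homotopy invariance of $\KKK^{H}$ (Section~\ref{sec:prelims}) gives that $\Res_{G}^{H}(\mathbf{p})$ is invertible, with inverse $[\tau^{*}]$.

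The main obstacle is the construction of the equivariant section: one has to know that the proper --- but in general not cocompact --- étale groupoid $H$ carries a cutoff function and that averaging against it defines a continuous map; the remaining steps are essentially formal, provided one keeps track of properness of the maps involved so as to remain within the $C_{0}$-functoriality needed for $\KKK$-theory.
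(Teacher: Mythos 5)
The paper itself gives no proof of this proposition---it is quoted verbatim from \cite[Proposition~8.2]{bon:goingdownbc}---so there is no in-text argument to diverge from. Your averaging-plus-fibrewise-straight-line-homotopy argument is the expected one and is essentially correct: the equivariance computation for $\tau$ (using that the coefficients sum to $1$ and the action is affine), the homotopy $F(y,t)=(1-t)\tau(p(y))+ty$ staying in a single fibre, the properness of $q$, $\tau$ and $F$, and the passage to invertibility in $\KKK^H$ via homotopy invariance all check out.

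The one step that needs repair is the cutoff function. You assert $c\in C_c(H^0)$ with $\sum_{h\in H^x}c(s(h))=1$; a \emph{compactly supported} cutoff exists only when the proper groupoid $H$ is cocompact, i.e.\ $H^0/H$ is compact, which is not assumed here (take $H=H^0$ a non-compact space: the only cutoff is $c\equiv 1$). The correct existence statement (Tu, cf.\ \cite{tu:novikov}) is that a proper groupoid whose orbit space is paracompact---here $H^0/H$ is second countable, locally compact and Hausdorff, hence paracompact, as you note---admits a continuous $c\colon H^0\to[0,\infty)$ with $\sum_{h\in H^x}c(s(h))=1$ and such that $r$ restricted to $s^{-1}(\supp c)$ is proper; it is this last condition, not compactness of $\supp c$, that makes the sum locally finite, and for an \'etale proper $H$ it forces $\{h\in H^x: s(h)\in\supp c\}$ to be finite, locally uniformly in $x$, which is exactly what your continuity argument for $\tau$ needs. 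With that substitution the definition of $\tau$ and the remainder of your proof go through unchanged.
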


We obtain the following consequence:
\begin{theo}
Let $G$ be a second countable \'etale groupoid which is strongly amenable at infinity. Then there exists an element $\eta\in \KKK^G(C_0(G^0),P(C_0(G^0)))$ such that $\eta\circ D=\mathrm{id}_{P(C_0(G^0))}$, where $D$ denotes the Dirac morphism for $G$. In particular, the Baum--Connes assembly map $\mu_A$ for $G$ is split injective for all $A\in\KKK^G$.
\end{theo}
\begin{proof}
It follows immediately from Theorem \ref{thm:cigdp} and Proposition \ref{prop:aaiproper} that $p\in \KKK^G(C_0(G^{0}),C_0(Y))$ is a weak equivalence. Hence $P(p)$ is an isomorphism in $\KKK^G$. Moreover, since $G$ acts amenably on $Y$, the natural morphism $D_{C_0(Y)}:P(C_0(Y))\rightarrow C_0(Y)$ is an isomorphism in $\KKK^{G\ltimes Y}$. Consider the canonical forgetful functor $p_*:\KKK^{G\ltimes Y}\rightarrow \KKK^G$ induced by the anchor map $p:Y\to G^0$.
It is not hard to see that $p_*$ is a triangulated functor. Moreover, it maps proper objects to proper objects (if $Z$ is a proper $G\ltimes Y$ space, then $Z$ is also a proper $G$-space). Hence, by Theorem \ref{thm:bccc} it maps the localizing subcategory generated by the projective objects in $\KKK^{G\ltimes Y}$ to the corresponding localizing subcategory generated by projective objects in $\KKK^G$.

Then, since the Dirac morphism is determined uniquely up to isomorphism of the associated exact triangles, we may assume that the natural morphism $D_{C_0(Y)}\in \KKK^{G}(P(C_0(Y)),C_0(Y))$ is an isomorphism as well.  Let $\beta$ denote its inverse.
Then the composition $\eta:=P(p)^{-1}\circ \beta\circ p\in \KKK^G(C_0(G^{0}),P(C_0(G^{0})))$ is the desired morphism. The final assertion then follows from the commutative diagram (\ref{eq:bcmap}).
\end{proof}
An element $\eta$ as in the theorem above is often called a dual Dirac morphism for $G$ (see \cite[Definition~8.1]{meyernest:tri}) and is unique (if it exists).

\subsection{Permanence properties}
In this section we will often need to compare the subcategories $\CI$ and $\mathcal{N}$ for different groupoids. To highlight this, we will slightly adjust our notation and write $\mathcal{N}_G$ for the weakly contractible objects in $\KKK^G$ and $\mathcal{CI}_G$ for the compactly induced objects. 

Sometimes we write ``BC'' as a shorthand for ``Baum--Connes conjecture''.

\subsubsection{Subgroupoids}
Given a second countable étale groupoid $G$, and a subgroupoid $H\subseteq G$ we may ask how the (strong) Baum--Connes conjectures for $G$ and $H$ are related. We need
\begin{lemm}\label{Lem:Res&Ind are nice functors}
Suppose $H\subseteq G$ is a subgroupoid. Then the following hold:
\begin{enumerate}
    \item If $H\subseteq G$ is open, then $\Res^H_G(\mathcal{N}_G)\subseteq \mathcal{N}_H$.
    \item If $H$ is closed in $G|_{H^0}$, then $\Res^H_G(\langle\mathcal{CI}_G\rangle)\subseteq \langle\mathcal{CI}_H\rangle$. \label{lem:res and ci}
    \item If $H$ is open in $G$ and closed in $G|_{H^0}$, then $\Res_G^H$ maps a Dirac triangle for $G$ to a Dirac triangle for $H$.
\end{enumerate}
\end{lemm}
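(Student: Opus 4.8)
The plan is to treat the three statements in order, the third being a formal consequence of the first two. Throughout I will use the intrinsic description of $\mathcal{N}$ from Lemma~\ref{lem:contractible objects} (an object lies in $\mathcal{N}$ exactly when its identity morphism restricts to $0$ along \emph{every} proper open subgroupoid) and the identification $\langle\mathcal{CI}\rangle=\langle\mathcal{P}r\rangle$ of Theorem~\ref{theo:ci=pr}. For~(1): let $H\subseteq G$ be open and $B\in\mathcal{N}_G$. Any proper open subgroupoid $Q\subseteq H$ is also a proper open subgroupoid of $G$, being open in the open subgroupoid $H$, with properness an intrinsic property of $Q$. Hence $\Res_G^Q(\mathrm{id}_B)=0$ by Lemma~\ref{lem:contractible objects}, and since restriction functors compose, $\Res_H^Q(\mathrm{id}_{\Res_G^H B})=\Res_G^Q(\mathrm{id}_B)=0$ for every such $Q$; Lemma~\ref{lem:contractible objects} applied to $H$ then gives $\Res_G^H(B)\in\mathcal{N}_H$.

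For~(2): by Theorem~\ref{theo:ci=pr} I replace $\langle\mathcal{CI}_G\rangle$ by $\langle\mathcal{P}r_G\rangle$ and $\langle\mathcal{CI}_H\rangle$ by $\langle\mathcal{P}r_H\rangle$. As $\Res_G^H$ is triangulated and commutes with countable direct sums, it carries $\langle\mathcal{P}r_G\rangle$ into the localizing subcategory generated by $\Res_G^H(\mathcal{P}r_G)$, so it is enough to prove that $\Res_G^H(A)$ is a proper $H$-$C^*$-algebra whenever $A$ is a proper $G$-$C^*$-algebra. Writing $A$ as a $Z\rtimes G$-algebra for a proper $G$-space $Z$ with moment map $\rho\colon Z\to G^0$, I set $Z_H=\rho^{-1}(H^0)$; then $H$ acts on $Z_H$ and $\Res_G^H(A)$ is naturally a $Z_H\rtimes H$-algebra (a routine identification of $C_0(X)$-algebra structures), so by Definition~\ref{def:proper} it remains to check that $Z_H\rtimes H$ is a proper groupoid. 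For this, the anchor map of the restricted groupoid $(Z\rtimes G)|_{Z_H}$ is the pullback of the (proper) anchor map of $Z\rtimes G$ along $Z_H\times Z_H\hookrightarrow Z\times Z$, hence proper; and $Z_H\rtimes H$ is the preimage of $H$ under the continuous projection $(Z\rtimes G)|_{Z_H}\to G|_{H^0}$, hence a \emph{closed} subgroupoid of the proper groupoid $(Z\rtimes G)|_{Z_H}$ precisely because $H$ is closed in $G|_{H^0}$ --- and a closed subgroupoid of a proper groupoid is proper.

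For~(3): a Dirac triangle for $G$ is the complementary triangle $P\to C_0(G^0)\to N\to\Sigma P$ attached to $C_0(G^0)$ for the pair $(\langle\mathcal{CI}_G\rangle,\mathcal{N}_G)$ (complementary by Theorem~\ref{thm:bccc}), so $P\in\langle\mathcal{CI}_G\rangle$ and $N\in\mathcal{N}_G$. Applying the triangulated functor $\Res_G^H$ gives an exact triangle $\Res_G^H P\to C_0(H^0)\to\Res_G^H N\to\Sigma\Res_G^H P$ in $\KKK^H$, using that $H^0$ is open in $G^0$ (since $H$ is open in $G$) to identify $\Res_G^H C_0(G^0)\cong C_0(H^0)$. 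Part~(1) shows $\Res_G^H N\in\mathcal{N}_H$ and part~(2) shows $\Res_G^H P\in\langle\mathcal{CI}_H\rangle$, so this triangle is the complementary decomposition of $C_0(H^0)$ for $(\langle\mathcal{CI}_H\rangle,\mathcal{N}_H)$; by the uniqueness of such triangles (Proposition~\ref{prop:csub}) it is a Dirac triangle for $H$. The hard part will be the reduction in~(2) --- that restriction of a proper groupoid algebra along a subgroupoid which is closed on its unit space remains proper --- and in particular isolating where the hypothesis ``$H$ closed in $G|_{H^0}$'' enters; the pullback / closed-subgroupoid description of $Z_H\rtimes H$ above is how I would keep this transparent, after which parts~(1) and~(3) are essentially formal.
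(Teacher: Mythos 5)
Your proof is correct and follows essentially the same route as the paper's: part (1) via Lemma \ref{lem:contractible objects} and the transitivity of restriction, part (2) by reducing to $\langle\mathcal{P}r\rangle$ via Theorem \ref{theo:ci=pr} and checking that a proper $G$-action on $Z$ restricts to a proper $H$-action on $\rho^{-1}(H^0)$, and part (3) as a formal consequence via uniqueness of the complementary triangle. The only difference is that you spell out the properness of $Z_H\rtimes H$ (as a closed subgroupoid of the proper groupoid $(Z\rtimes G)|_{Z_H}$), a step the paper merely asserts.
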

\begin{proof}
To show the first item suppose $H$ is an open subgroupoid of $G$ and let $N\in \mathcal{N}_G\subseteq \KKK^G$. Suppose that $Q$ is a proper open subgroupoid of $H$. Then $Q$ is also a proper open subgroupoid of $G$ and hence $\Res_H^Q(\Res_G^H(\mathrm{id}_N))=\Res_G^Q(\mathrm{id}_N)\stackrel{\ref{lem:contractible objects}}{=}0$. Another application of Lemma \ref{lem:contractible objects} yields the result.

Next suppose $H$ is closed in $G|_{H^0}$. Whenever $G$ acts properly on a space $Z$ with anchor map $p:Z\rightarrow G^0$, then the action restricts to a proper action of $H$ on $p^{-1}(H^0)$. In particular, it follows that $\Res^H_G(\Cc\I_G)\subseteq \Pr_H$ and hence $\Res_G^H(\langle\Cc\I_G\rangle)\subseteq \langle\Cc\I_H\rangle$ by Theorem \ref{theo:ci=pr}.

The final assertion is a direct consequence of the first two statements.
\end{proof}

\begin{lemm}
Suppose $H\subseteq G$ is a subgroupoid such that $H$ is closed in $G|_{H^0}$. Then the following hold:
$$\Ind_H^G:\KKK^H\rightarrow \KKK^G$$ is triangulated, $\Ind_H^G(\mathcal{N}_H)\subseteq \mathcal{N}_G$, and  $\Ind_H^G\langle\Cc\I_H\rangle\subseteq\langle\Cc\I_G\rangle$. In particular, it maps Dirac triangles to Dirac triangles.
\end{lemm}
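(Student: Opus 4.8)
The plan is to prove the three displayed properties of $\Ind_H^G$ in order; the final clause, that $\Ind_H^G$ sends Dirac triangles to Dirac triangles, is then formal, for if $P(A)\to A\to N(A)\to\Sigma P(A)$ is the Dirac triangle of $A\in\KKK^H$ then applying the triangulated functor $\Ind_H^G$ gives an exact triangle whose left-hand term lies in $\langle\Cc\I_G\rangle$ (by the third property) and whose cone lies in $\mathcal{N}_G$ (by the second), and such a triangle is unique up to isomorphism by Proposition \ref{prop:csub}. To see that $\Ind_H^G$ is triangulated, recall that by construction it factors as $\jmath^H\circ\phi^*$, where $\phi=s|_{G_{H^0}}\colon G_{H^0}\to H^0$ is the restriction of the (open) source map and $\jmath^H$ is the $G$-equivariant reduced descent along $H$. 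The functor $\phi^*$ is of the form $\sigma_{C_0(G_{H^0})}$, and because $\phi$ is open $C_0(G_{H^0})$ is a continuous field over $H^0$ with commutative, hence nuclear, fibres, so $\phi^*$ is triangulated by the discussion after Definition \ref{def:eth}. The descent $\jmath^H$ commutes with suspension, and it carries the mapping cone triangle of a $\ast$-homomorphism $f$ to that of $f\rtimes_r H$, since the reduced crossed product preserves the pullback of $\ast$-homomorphisms that defines $\Cone(f)$, so $\Cone(f)\rtimes_r H\cong\Cone(f\rtimes_r H)$; hence $\jmath^H$, and therefore $\Ind_H^G$, is triangulated. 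This factorization also shows that $\Ind_H^G$ commutes with countable direct sums, which I will use below.

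For $\Ind_H^G(\mathcal{N}_H)\subseteq\mathcal{N}_G$ I would argue through Theorem \ref{thm:rkkloc}, which identifies $\mathcal{N}$ with the kernel of the pullback functor $p^*$ along $G\ltimes\E G\to G$, so that $X\in\mathcal{N}$ precisely when $p^*X\cong 0$. Let $N_0\in\mathcal{N}_H$, so $p_H^*N_0\cong 0$ in $\KKK^{H\ltimes\E H}$. Since $H$ is closed in $G|_{H^0}$, the restricted action makes $\Res_G^H\E G$ a proper $H$-space, so by the universal property of $\E H$ there is an $H$-map $j\colon\Res_G^H\E G\to\E H$; the moment-map pullback $q^*\colon\KKK^H\to\KKK^{H\ltimes\Res_G^H\E G}$ then factors as $j^*\circ p_H^*$, whence $q^*N_0\cong j^*(p_H^*N_0)\cong 0$. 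By Lemma \ref{lem:indcomp}, which in the relevant form reads $p_G^*\circ\Ind_H^G\cong\Ind^{G\ltimes\E G}_{H\ltimes\Res_G^H\E G}\circ q^*$, we conclude $p_G^*(\Ind_H^G N_0)\cong\Ind^{G\ltimes\E G}_{H\ltimes\Res_G^H\E G}(q^*N_0)\cong 0$, and hence $\Ind_H^G N_0\in\mathcal{N}_G$ by Theorem \ref{thm:rkkloc} again.

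For $\Ind_H^G\langle\Cc\I_H\rangle\subseteq\langle\Cc\I_G\rangle$, I would use Theorem \ref{theo:ci=pr} to rewrite both sides as the localizing subcategories generated by proper algebras; since $\Ind_H^G$ is triangulated and preserves countable direct sums, it suffices to check that $\Ind_H^G$ takes a proper $H$-$C^*$-algebra to a proper $G$-$C^*$-algebra. If $A$ is a $\mathcal{X}\rtimes H$-algebra with $\mathcal{X}$ a proper $H$-space, then the diagonal $H$-action on $G_{H^0}\times_{H^0}\mathcal{X}$ is free and proper, properness coming from the $\mathcal{X}$-coordinate, so $G\times_H\mathcal{X}:=(G_{H^0}\times_{H^0}\mathcal{X})/H$ is a locally compact Hausdorff $G$-space, and the $H$-invariant part of the central $C_0(G_{H^0}\times_{H^0}\mathcal{X})$ makes $\Ind_H^G A$ into a $(G\times_H\mathcal{X})\rtimes G$-algebra. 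That $G\times_H\mathcal{X}$ is a \emph{proper} $G$-space I would derive from the groupoid equivalence $G\ltimes(G\times_H\mathcal{X})\sim H\ltimes\mathcal{X}$ implemented by the bibundle $G_{H^0}\times_{H^0}\mathcal{X}$, together with invariance of properness under equivalence: $H\ltimes\mathcal{X}$ is proper, hence so is $G\ltimes(G\times_H\mathcal{X})$. Propagating along exact triangles and direct sums then gives $\Ind_H^G\langle\Pr_H\rangle\subseteq\langle\Pr_G\rangle$.

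The step I expect to be the main obstacle is this last one — exhibiting $G\times_H\mathcal{X}$ as a proper $G$-space. One has to construct the bibundle $G_{H^0}\times_{H^0}\mathcal{X}$ and verify both anchor conditions, and it is exactly here that the hypothesis ``$H$ closed in $G|_{H^0}$'' enters in an essential way: it is what forces the residual left $G$-action on the bibundle to be proper, so that one really has an equivalence (compare Remark \ref{rem:indmodel}). The remaining ingredients — the factorization $\Ind_H^G=\jmath^H\circ\phi^*$ and the base-change isomorphism $p_G^*\circ\Ind_H^G\cong\Ind\circ q^*$ — are routine bookkeeping given the results already established.
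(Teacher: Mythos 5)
Your proof is correct, but it reaches the two substantive inclusions by genuinely different routes than the paper. For $\Ind_H^G\langle\Cc\I_H\rangle\subseteq\langle\Cc\I_G\rangle$ the paper does not induce arbitrary proper $H$-algebras: it uses induction in stages, $\Ind_H^G\circ\Ind_Q^H=\Ind_Q^G$ for a compact action $Q\subseteq H$, notes that the hypothesis forces $Q$ to be closed in $G|_{Q^0}$, so that $G$ acts properly on $G_{Q^0}/Q$ and $\Ind_Q^G A$ is proper; both arguments then finish with Theorem \ref{theo:ci=pr}. Your version is more general, at the price of invoking Morita-invariance of properness for the bibundle $G_{H^0}\times_{H^0}\mathcal{X}$; note, though, that your closing diagnosis of where closedness bites is slightly off --- the left $G$-action on that bibundle is proper essentially for free (it is left translation in the $G$-coordinate), and properness of the diagonal $H$-action already follows from properness of $H\ltimes\mathcal{X}$ alone; the hypothesis is really consumed in making $p^{-1}(H^0)\subseteq\E G$ a proper $H$-space (your $\mathcal{N}$-step) and, in the paper's version, in making the right translation action of $Q$ on $G_{Q^0}$ proper. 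For $\Ind_H^G(\mathcal{N}_H)\subseteq\mathcal{N}_G$ the paper instead tensors: for $A\in\mathcal{N}_H$ one gets $\Res_G^H(P_G(C_0(G^0)))\otimes_{H^0}A\cong 0$ from Lemma \ref{Lem:Res&Ind are nice functors}.(2), and Lemma \ref{lem:indcomp} transports this to $P_G(C_0(G^0))\otimes_{G^0}\Ind_H^G A\cong 0$. Your route through $\mathcal{N}=\ker p^*$ works, but rests on two points you should make explicit: the converse inclusion $\ker p^*\subseteq\mathcal{N}$ is not literally stated in Theorem \ref{thm:rkkloc} and needs the one-line argument $\KKK^G(P,X)\cong\KKK^{G\ltimes\E G}(p^*P,p^*X)=0$ for all $P\in\langle\Cc\I_G\rangle$ followed by Proposition \ref{prop:csub}; and the base-change isomorphism of Lemma \ref{lem:indcomp} with $B=C_0(\E G)$ must be checked to respect the $C_0(\E G)$-structures, i.e.\ to be an isomorphism in $\KKK^{G\ltimes\E G}$ and not merely in $\KKK^G$. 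Both are routine, so the proposal stands; the paper's tensor argument is shorter mainly because it reuses the restriction lemma already proved.
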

\begin{proof}
Induction in stages gives that a compactly induced object in $\KKK^H$ is mapped to a proper object in $\KKK^G$. Indeed, If $Q\subseteq H$ is a compact action, then $\Ind_H^G(\Ind_Q^H A)=\Ind_Q^G A$. It follows from our assumption that $Q$ is closed in $G|_{Q^0}$, and hence the action of $G$ on $G_{Q^0}/Q$ is proper. It follows immediately that $\Ind_Q^G A$ is a proper $G$-algebra (see also the induction picture in \cite{bon:goingdownbc}). Whence $\Ind_H^G\langle\Cc\I_H\rangle\subseteq\langle\Cc\I_G\rangle$ by Theorem \ref{theo:ci=pr}.

Finally, let $A\in \mathcal{N}_H\subseteq \KKK^H$. Then by Lemma \ref{Lem:Res&Ind are nice functors}.(2) we have $$\Res_G^H(P_G(C_0(G^0)))\otimes^{\text{max}}_{H^0}A\cong P_H(C_0(H^0))\otimes^{\text{max}}_{H^0}A\cong 0.$$
Using Lemma \ref{lem:indcomp} we conclude that
$$P_G(C_0(G^0))\otimes_{G^0}\Ind_H^G A\cong\Ind_H^G(\Res_G^H(P_G(C_0(G^0)))\otimes_{H^0}A)\cong 0$$ as well.
\end{proof}

The following result was already observed by Tu \cite{tu:coarsebcgII} for the classical Baum--Connes conjecture. Unfortunately, his proof relies on \cite[Lemma~3.9]{tu:coarsebcgII}, which seems to be erroneous. A counterexample where $G$ is the compact space $[0,1]$ (viewed as a trivial groupoid just consisting of units) is exhibited in \cite[Example~5.6]{dadmey:Etop} and \cite[p.36]{bauval:nuc}.
\begin{theo}\label{thm:BC passes to subgrpds}
Let $G$ be a second countable groupoid, $H\subseteq G$ be an \'etale subgroupoid that is closed in $G|_{H^0}$, and $A\in \KKK^H$. Then there is a natural $\KKK$-equivalence between $P_G(\Ind_H^G A)\rtimes_r G$ and $P_H(A)\rtimes_r H$.
Hence the (strong) Baum--Connes conjecture with coefficients passes to closed subgroupoids and restrictions to open subsets.
\end{theo}
\begin{proof}
From the previous lemma we conclude that
$P_G(\Ind_H^G A)\rtimes_r G\cong \Ind_H^G(P_H(A))\rtimes_r G$. The latter however is canonically Morita-equivalent (and hence in particular $\KKK$-equivalent) to $P_H(A)\rtimes_r H$. The result about the (strong) Baum--Connes conjecture follows readily.
\end{proof}
\subsubsection{Continuity in the coefficient algebra}
Let $(A_n)_n$ be an inductive system of $G$-$C^*$algebras and let $A=\varinjlim A_n$ be the inductive limit. In \cite[Section~3]{bondel:kun} it was shown that $A$ carries a canonical $G$-action making all the structure maps equivariant, i.e., the inductive limit exists in the category of $G$-$C^*$-algebras.

\begin{prop}\label{prop:limits}
Let $(A_n)_n$ be an admissible inductive system of $G$-$C^*$algebras and let $A=\varinjlim A_n$.
Then $P(A)\rtimes_r G$ is naturally $\KKK$-equivalent to $\hlim (P(A_n)\rtimes_r G)$, and $N(A)\rtimes_r G$ is naturally $\KKK$-equivalent to $\hlim (N(A_n)\rtimes_r G)$.

If furthermore $G$ satisfies the (strong) Baum--Connes conjecture with coefficients in $A_n$ for all $n\in\N$, then $G$ satisfies the (strong) Baum--Connes conjecture with coefficients in $A$.
\end{prop}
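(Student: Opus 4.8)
The plan is to transport the canonical exact triangles $P(A_n)\to A_n\to N(A_n)\to\Sigma P(A_n)$ to $A$ by passing through the homotopy colimit, and then to apply the descent functor $-\rtimes_r G$.

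First I would use admissibility: by Proposition \ref{prop:admlim} the ordinary inductive limit $A$ is isomorphic to $\hlim A_n$ in $\KKK^G$, compatibly with the canonical maps $A_n\to A$. By functoriality of the complementary decomposition (Proposition \ref{prop:csub}), the connecting maps $\alpha_m^n$ act on all three vertices of $P(A_n)\to A_n\to N(A_n)\to\Sigma P(A_n)$, so these assemble into a sequence of exact triangles indexed by $\N$. Forming countable direct sums termwise yields an exact triangle on which the shift endomorphism $S$ acts; completing $\mathrm{id}-S$ produces an exact triangle
\[
\hlim P(A_n)\longrightarrow\hlim A_n\longrightarrow\hlim N(A_n)\longrightarrow\Sigma\hlim P(A_n),
\]
i.e.\ the homotopy colimit of a sequence of exact triangles is again an exact triangle (cf.\ \cite{nee:tri}). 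Since $\mathcal{P}=\CI=\langle P_\I\rangle$ and $\mathcal{N}=N_\I$ are localizing — hence closed under countable direct sums, suspensions and cones — the first term lies in $\mathcal{P}$ and the third in $\mathcal{N}$. Transporting along $\hlim A_n\cong A$ and invoking the uniqueness of the complementary decomposition, I obtain natural isomorphisms $P(A)\cong\hlim P(A_n)$ and $N(A)\cong\hlim N(A_n)$ in $\KKK^G$, under which $D_A$ corresponds to $\hlim D_{A_n}$ by naturality of the Dirac morphism.

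Next, $-\rtimes_r G$ is a triangulated functor $\KKK^G\to\KKK$ commuting with countable direct sums (reduced crossed products commute with $c_0$-direct sums), hence with homotopy colimits. Applying it gives the asserted natural $\KKK$-equivalences $P(A)\rtimes_r G\cong\hlim(P(A_n)\rtimes_r G)$ and $N(A)\rtimes_r G\cong\hlim(N(A_n)\rtimes_r G)$, and identifies $D_A\rtimes_r G$ with $\hlim(D_{A_n}\rtimes_r G)$. For the strong conjecture, if each $D_{A_n}\rtimes_r G$ is a $\KKK$-equivalence, then comparing the defining triangles of the two homotopy colimits the two $\bigoplus$-maps are isomorphisms (a countable direct sum of isomorphisms is one), so the five lemma forces $D_A\rtimes_r G$ to be invertible. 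For the ordinary conjecture I would use Theorem \ref{thm:asweak}: there $\mu^G_{A_n}$ is an isomorphism precisely when $(D_{A_n}\rtimes G)_*$ is, and since $K$-theory commutes with countable direct sums, Lemma \ref{lem:millimone} gives $K_*(P(A)\rtimes G)\cong\varinjlim K_*(P(A_n)\rtimes G)$ and $K_*(A\rtimes G)\cong\varinjlim K_*(A_n\rtimes G)$, under which $(D_A\rtimes G)_*$ is the colimit of the isomorphisms $(D_{A_n}\rtimes G)_*$; hence it, and therefore $\mu^G_A$, is an isomorphism.

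The only point requiring genuine care is the passage from the ladder of exact triangles to the homotopy colimit triangle in the second step, which rests on the $3\times 3$ (``nine'') lemma in triangulated categories and the choices it entails, together with the fact that $\mathcal{P}$ and $\mathcal{N}$ are localizing. This is a standard feature of triangulated categories and introduces no new ideas; everything else is bookkeeping with the induction--restriction adjunction and the functoriality of cellular approximation already established above.
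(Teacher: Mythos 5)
Your proposal is correct and follows essentially the same route as the paper: both assemble the ladder of Dirac triangles for the $A_n$, invoke the $3\times 3$ lemma (the paper cites \cite[Proposition~1.1.11]{bbd:faisceauxpervers}) to obtain the homotopy-colimit triangle, use that $\mathcal{P}$ and $\mathcal{N}$ are localizing together with uniqueness of the Dirac triangle to identify $P(A)\cong\hlim P(A_n)$ and $N(A)\cong\hlim N(A_n)$, and then apply the triangulated functor $-\rtimes_r G$. The concluding arguments (five lemma for the strong version, continuity of $K$-theory under homotopy colimits for the classical one) also match the paper's in substance.
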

\begin{proof}
Let us consider the following diagram,
\begin{center}
    
\begin{tikzcd}
\bigoplus_n P(A_n) \arrow[r, "\cong"] \arrow[d, "id-S"] & P(\bigoplus_n A_n) \arrow[r, "D_{\oplus A_n}"] \arrow[d, "P(id-S)"] & \bigoplus_n A_n \arrow[d, "id-S"] \\
\bigoplus_n P(A_n) \arrow[r, "\cong"]                   & P(\bigoplus_n A_n) \arrow[r, "D_{\oplus A_n}"']                     & \bigoplus_n A_n                  
\end{tikzcd}.
\end{center}
The horizontal maps in the left-hand square are the natural isomorphisms obtained from the facts that the categories $\CI$ and $\mathcal{N}$ are closed under direct sums and the Dirac triangle is unique. The square on the right commutes by naturality of the Dirac morphism. By \cite[Proposition~1.1.11]{bbd:faisceauxpervers} the outer square forms the center of a larger diagram, in which each row and column is an exact triangle, and each square commutes (up to a sign), as shown below.

\begin{center}
  
\begin{tikzcd}
\Sigma \hlim N(A_n) \arrow[d] \arrow[r, dashed] & \Sigma \hlim P(A_n) \arrow[d] \arrow[r, dashed] & \Sigma A \arrow[d] \arrow[r, dashed] & \Sigma \hlim N(A_n) \arrow[d] \\
\bigoplus_n \Sigma N(A_n) \arrow[d] \arrow[r]            & \bigoplus_n P(A_n) \arrow[r] \arrow[d]                   & \bigoplus_n A_n \arrow[d] \arrow[r]  & \bigoplus N(A_n) \arrow[d]             \\
\bigoplus_n \Sigma N(A_n) \arrow[d] \arrow[r]            & \bigoplus_n P(A_n) \arrow[r] \arrow[d]                   & \bigoplus_n A_n \arrow[d] \arrow[r]  & \bigoplus N(A_n) \arrow[d, dashed]     \\
\hlim \Sigma N(A_n) \arrow[r, dashed]           & \hlim P(A_n) \arrow[r, dashed]                  & A \arrow[r, dashed]                  & \hlim N(A_n)                 
\end{tikzcd}
\end{center}

Since the horizontal maps in the middle square are the morphisms defining the homotopy limit uniquely up to isomorphisms, it is clear which objects appear in the first and last row. In the diagram above we have already made use of the fact that the sequence $(A_n)_n$ is admissible by replacing $\hlim A_n$ by the inductive limit $A=\lim A_n$. Consider now the bottom row of the diagram. Since $\CI$ and $\mathcal{N}$ are localizing subcategories, they are closed under homotopy direct limits. Hence, by uniqueness, the bottom row is naturally isomorphic to the exact triangle
$$\Sigma N(A)\rightarrow P(A)\rightarrow A\rightarrow N(A).$$
Taking reduced crossed products is a triangulated functor on $\KKK^G$, so we can take crossed products throughout the diagram, completing the proof of the first assertion.

Now if $G$ satisfies the strong Baum--Connes conjecture with coefficients in $A_n$ for each $n$, then the horizontal arrows in the central square are $\KKK$-equivalences (after taking reduced crossed products). It then follows immediately that $\jmath^G_r(D_A)$ is also a $\KKK$-equivalence. For the classical version of the Baum--Connes conjecture first apply the reduced crossed product functor to the diagram above and then note that the two middle columns in the resulting diagram induce a homomorphism of long exact sequences in $K$-theory. An application of the Five-Lemma yields the result.
\end{proof}

\subsubsection{Products and unions of subgroupoids}
Let $G=\bigcup G_n$ be a union of a sequence of clopen subgroupoids. We shall need the $G_n$ to be open so that if $A\in \KKK^G$, we can write the crossed product as an inductive limit $A\rtimes_r G=\lim A\rtimes_r G_n$ as well. Since the $G_n$ are also closed we obtain canonical restriction maps $\Gamma_c(G,\mathcal{A})\rightarrow \Gamma_c(G_n,\mathcal{A})$, which induce completely positive contractions $A\rtimes_r G\rightarrow A\rtimes_r G_n$. It follows that the inductive system $(A\rtimes_r G_n)_n$ is admissible and hence in the category $\KKK$ we can identify the direct limit $A\rtimes_r G$ with the homotopy direct limit $\text{ho-lim}\,A \rtimes_r G_n$.

\begin{prop}
Let $(G_n)_n$ be a sequence of clopen subgroupoids of $G$ such that $G=\bigcup_n G_n$. Suppose $A\in \KKK^G$ such that $G_n$ satisfies (strong) BC with coefficients in $\Res_G^{G_n}(A)$ for all $n\in\N$. Then $G$ satisfies (strong) BC with coefficients in $A$.
\end{prop}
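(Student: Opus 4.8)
The plan is to deduce both forms of the conjecture for $G$ from the corresponding (already assumed) statements for the $G_n$'s by reformulating them as a vanishing property of the weakly contractible part $N(A)$ of $A$, which transfers cleanly along the exhaustion $G=\bigcup_n G_n$. As a first step, since each $G_n$ is clopen it is open in $G$ and closed in $G|_{G_n^{0}}$, so Lemma~\ref{Lem:Res&Ind are nice functors}.(3) applies and $\Res_G^{G_n}$ carries the functorial Dirac triangle $\Sigma N(A)\xrightarrow{}P(A)\xrightarrow{D_A}A\xrightarrow{}N(A)$ in $\KKK^G$ to a Dirac triangle for the $G_n$-algebra $\Res_G^{G_n}(A)$; in particular there is an isomorphism $\Res_G^{G_n}(N(A))\cong N_{G_n}(\Res_G^{G_n}(A))$ in $\KKK^{G_n}$ identifying $\Res_G^{G_n}(D_A)$ with the Dirac morphism $D_{\Res_G^{G_n}(A)}$.

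Next I would record that for every $B\in\KKK^G$ the reduced crossed product $B\rtimes_r G$ is the ordinary $C^{*}$-inductive limit of the algebras $\Res_G^{G_n}(B)\rtimes_r G_n$, the connecting maps being extension of compactly supported sections by zero (whose union is dense, since supports are compact hence eventually contained in some $G_n$), and that this system is admissible via the restriction-of-sections completely positive contractions $B\rtimes_r G\to\Res_G^{G_n}(B)\rtimes_r G_n$ — exactly as in the discussion preceding the statement, applied now to $B$ rather than to $A$. By Proposition~\ref{prop:admlim} one then also has $B\rtimes_r G\cong\hlim\bigl(\Res_G^{G_n}(B)\rtimes_r G_n\bigr)$ in $\KKK$. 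Applying the triangulated restriction functor to the Dirac triangle for $(G,A)$ and then the triangulated functor $-\rtimes_r G_n$ (cf.\ the proof of Proposition~\ref{prop:limits}) produces, for each $n$, an exact triangle in $\KKK$ with third term $\Res_G^{G_n}(N(A))\rtimes_r G_n$ in which, via the identification of the first step, the middle morphism is the descent $D_{\Res_G^{G_n}(A)}\rtimes_r G_n$ of the Dirac morphism for $G_n$.

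Now I would conclude. In the strong case: by definition $G_n$ satisfies the strong conjecture with coefficients in $\Res_G^{G_n}(A)$ exactly when $D_{\Res_G^{G_n}(A)}\rtimes_r G_n$ is a $\KKK$-equivalence, equivalently when the third term $\Res_G^{G_n}(N(A))\rtimes_r G_n$ is a zero object of $\KKK$. Granting this for all $n$, the countable direct sum $\bigoplus_n\bigl(\Res_G^{G_n}(N(A))\rtimes_r G_n\bigr)$ is a zero object, hence so is its homotopy colimit, hence $N(A)\rtimes_r G\cong 0$ by the previous step; plugging this into the exact triangle obtained by applying $-\rtimes_r G$ to the Dirac triangle for $(G,A)$ shows that $D_A\rtimes_r G$ is a $\KKK$-equivalence, which is the strong conjecture for $(G,A)$. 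In the classical case I would instead use Theorem~\ref{thm:asweak}: the assembly map $\mu^{G_n}_{\Res_G^{G_n}(A)}$ is an isomorphism iff $(D_{\Res_G^{G_n}(A)}\rtimes_r G_n)_*$ is, iff — by the long exact $K$-theory sequence of the triangle above — $K_*\bigl(\Res_G^{G_n}(N(A))\rtimes_r G_n\bigr)=0$. Under the hypothesis this holds for all $n$, so continuity of $K$-theory gives $K_*(N(A)\rtimes_r G)=\varinjlim_n K_*\bigl(\Res_G^{G_n}(N(A))\rtimes_r G_n\bigr)=0$; the long exact sequence of the $(G,A)$ triangle then shows $(D_A\rtimes_r G)_*$ is a $K_*$-isomorphism, whence $\mu^G_A$ is an isomorphism by Theorem~\ref{thm:asweak} again.

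The reason for routing the argument through $N(A)$ rather than directly through the invertibility of $D_A\rtimes_r G$ is that (strong) Baum--Connes then becomes a vanishing statement for $N(A)\rtimes_r G$, which transfers between $G$ and the exhaustion by purely formal facts — a homotopy colimit of zero objects is zero, and $K$-theory commutes with sequential colimits — thereby avoiding the one genuinely delicate point this route would otherwise encounter, namely the naturality of the Kasparov descent of the (non-$*$-homomorphic) class $D_A$ with respect to the inclusions $G_n\hookrightarrow G$. Accordingly, the two substantive inputs are Lemma~\ref{Lem:Res&Ind are nice functors}.(3), where clopenness of the $G_n$ is used, and the admissibility of the inductive system of reduced crossed products; I expect the latter (together with checking that $B\rtimes_r G$ really is the colimit of the $\Res_G^{G_n}(B)\rtimes_r G_n$, simultaneously for $B=A$, $B=P(A)$ and $B=N(A)$) to be the main bookkeeping burden, though it is entirely parallel to the case already treated before the statement.
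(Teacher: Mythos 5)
Your proposal is correct and follows essentially the same route as the paper: restrict the Dirac triangle along the clopen inclusions $G_n\subseteq G$ using Lemma~\ref{Lem:Res&Ind are nice functors}, write $N(A)\rtimes_r G$ as the homotopy direct limit of the admissible system $N_{G_n}(\Res_G^{G_n}(A))\rtimes_r G_n$, and conclude from the fact that the $\KKK$-contractible (resp.\ $K$-contractible) objects form a localizing subcategory, hence are closed under homotopy colimits. Your explicit split into the strong and classical cases (zero homotopy colimit versus continuity of $K$-theory) is just an unpacking of that last closure statement.
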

\begin{proof}
We know from Lemma \ref{Lem:Res&Ind are nice functors} that $\Res_G^{G_n}$ preserves Dirac triangles. It follows that in $\KKK$ we have identifications
$$P_{G_n}(\Res_G^{G_n}(A))\rtimes_r G_n\cong (\Res_G^{G_n} P(A))\rtimes_r G_n,$$
and similarly 
$$N_{G_n}(\Res_G^{G_n}(A))\rtimes_r G_n\cong (\Res_G^{G_n} N(A))\rtimes_r G_n.$$
By taking limits we get
$$P(A)\rtimes_r G\cong \text{ho-lim}\,P(A)\rtimes_r G_n\cong \text{ho-lim}\,P_{G_n}(\Res_G^{G_n}(A))\rtimes_r G_n$$ and similarly
$$N(A)\rtimes_r G\cong \text{ho-lim}\,N_{G_n}(\Res_G^{G_n}(A))\rtimes_r G_n$$
Recall that $G$ satisfies the (strong) Baum--Connes conjecture with coefficients in $A$ if and only if $N(A)\rtimes_r G$ is $K$-contractible (or $\KKK$-contractible for the strong version). Since the categories of $K$-contractible (resp. $\KKK$-contractible) objects are localising, they are closed under homotopy direct limits. The result follows.
\end{proof}

Let us now turn our attention to direct products. Suppose $G=G_1\times G_2$ is the product of two \'etale groupoids $G_1,G_2$. Suppose further that $A_i\in \KKK^{G_i}$  for $i=1,2$. If either $A_1$ or $A_2$ is exact, the minimal tensor product $A:=A_1\otimes A_2$ comes equipped with a diagonal action and hence can be viewed as an object in $\KKK^G$.

\begin{prop}\label{Prop:Strong BC for products}
If $G_i$ satisfies strong BC with coefficients in $A_i$ for $i=1,2$, then $G_1\times G_2$ satisfies strong BC with coefficients in $A_1\otimes A_2$.
\end{prop}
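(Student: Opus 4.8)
The plan is to reduce the strong Baum--Connes conjecture for the product $G_1 \times G_2$ to a statement about the compatibility of the cellular approximation functor with exterior tensor products. First I would argue that $P_{G_1}(A_1) \otimes P_{G_2}(A_2)$ serves as a cellular approximation for $A_1 \otimes A_2$ in $\KKK^{G_1 \times G_2}$. The key observations are: (i) the exterior tensor product of a compactly induced object for $G_1$ with a compactly induced object for $G_2$ is compactly induced for $G_1 \times G_2$, since if $Q_i \subseteq G_i$ is a compact action then $Q_1 \times Q_2 \subseteq G_1 \times G_2$ is again a compact action (it is an action groupoid of a finite group sitting inside the isotropy), and $\Ind_{Q_1}^{G_1}(B_1) \otimes \Ind_{Q_2}^{G_2}(B_2) \cong \Ind_{Q_1 \times Q_2}^{G_1 \times G_2}(B_1 \otimes B_2)$ by a Fubini-type argument on the crossed product construction; and (ii) the exterior tensor product of a weakly contractible object with anything is weakly contractible, because $\Res^{Q_1 \times Q_2}_{G_1 \times G_2}(\mathrm{id}_{N_1 \otimes A_2}) = \Res^{Q_1}_{G_1}(\mathrm{id}_{N_1}) \otimes \Res^{Q_2}_{G_2}(\mathrm{id}_{A_2}) = 0$ once we know $\Res^{Q_1}_{G_1}(\mathrm{id}_{N_1}) = 0$, together with the analogue in the other variable and the fact that every compact action of $G_1 \times G_2$ ``splits'' up to the subcategory considerations.

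Next I would build the candidate Dirac triangle for $A_1 \otimes A_2$ by tensoring the Dirac triangles for $A_1$ and $A_2$. Concretely, starting from the exact triangles $P_{G_i}(A_i) \xrightarrow{D_{A_i}} A_i \to N_{G_i}(A_i) \to \Sigma P_{G_i}(A_i)$ in $\KKK^{G_i}$, apply the triangulated functors $- \otimes A_2$ (over $G_2$, pulled back suitably) and $P_{G_1}(A_1) \otimes -$ to obtain a $3 \times 3$ diagram via the octahedral/nine-lemma machinery as in \cite[Proposition~1.1.11]{bbd:faisceauxpervers}, exactly as was done in the proof of Proposition \ref{prop:limits} above. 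The upshot is an exact triangle
\[
P_{G_1}(A_1) \otimes P_{G_2}(A_2) \xrightarrow{D_{A_1} \otimes D_{A_2}} A_1 \otimes A_2 \to C \to \Sigma(P_{G_1}(A_1) \otimes P_{G_2}(A_2))
\]
in which the cone $C$ is an extension of two objects of the form (weakly contractible) $\otimes$ (something) and (something) $\otimes$ (weakly contractible), hence lies in $\mathcal{N}_{G_1 \times G_2}$ by step (ii), while the left-hand object lies in $\mathcal{CI}_{G_1 \times G_2}$ by step (i). By the uniqueness part of Proposition \ref{prop:csub}, this is the Dirac triangle for $A_1 \otimes A_2$, so $P(A_1 \otimes A_2) \cong P_{G_1}(A_1) \otimes P_{G_2}(A_2)$ and $D_{A_1 \otimes A_2} = D_{A_1} \otimes D_{A_2}$ up to isomorphism.

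Finally I would apply the reduced crossed product functor. There is a canonical isomorphism $(B_1 \otimes B_2) \rtimes_r (G_1 \times G_2) \cong (B_1 \rtimes_r G_1) \otimes (B_2 \rtimes_r G_2)$ (natural in $B_1, B_2$, using exactness of one factor), under which $D_{A_1 \otimes A_2} \rtimes_r (G_1 \times G_2)$ is identified with $(D_{A_1} \rtimes_r G_1) \otimes (D_{A_2} \rtimes_r G_2)$. Since $G_i$ satisfies strong BC with coefficients in $A_i$, each $D_{A_i} \rtimes_r G_i$ is a $\KKK$-equivalence, and the exterior tensor product of two $\KKK$-equivalences is a $\KKK$-equivalence (the exterior Kasparov product of the two inverses is an inverse). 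Hence $D_{A_1 \otimes A_2} \rtimes_r (G_1 \times G_2)$ is a $\KKK$-equivalence, which is exactly the strong Baum--Connes conjecture for $G_1 \times G_2$ with coefficients in $A_1 \otimes A_2$.

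I expect the main obstacle to be step (i): carefully verifying the Fubini-type isomorphism $\Ind_{Q_1}^{G_1}(B_1) \otimes \Ind_{Q_2}^{G_2}(B_2) \cong \Ind_{Q_1 \times Q_2}^{G_1 \times G_2}(B_1 \otimes B_2)$ at the level of the crossed-product model of the induction functor, keeping track of the pullback algebras $\phi_i^* B_i$, the diagonal actions, and the interplay with the maximal versus minimal tensor product in the presence of the exactness hypothesis on one of the $A_i$. A clean way to organize this is to combine Lemma \ref{lem:indcomp} with associativity and commutativity of the balanced tensor products and the observation $G_{(Q_1 \times Q_2)^0} \cong (G_1)_{Q_1^0} \times (G_2)_{Q_2^0}$ as $Q_1 \times Q_2$-spaces; the bookkeeping is routine but needs care, much like the proof of Theorem \ref{thm:iradj}. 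Everything else is formal triangulated-category manipulation of the kind already deployed in Proposition \ref{prop:limits}.
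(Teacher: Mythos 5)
Your proposal is correct and follows essentially the same route as the paper: establish that exterior tensor products of compactly induced objects are compactly induced and that tensoring a weakly contractible object with anything stays weakly contractible, deduce that the tensor product of the two Dirac triangles is the Dirac triangle for $A_1\otimes A_2$, and then use compatibility of reduced crossed products with minimal tensor products together with the fact that an exterior tensor product of $\KKK$-equivalences is a $\KKK$-equivalence. If anything, your write-up is slightly more careful than the paper's, which only records the inclusion $\mathcal{N}_{G_1}\otimes\mathcal{N}_{G_2}\subseteq \mathcal{N}_{G_1\times G_2}$, whereas your identification of the cone of $D_{A_1}\otimes D_{A_2}$ as an extension of objects of the form $(\text{weakly contractible})\otimes(\text{anything})$ is the statement actually needed.
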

\begin{proof}
We claim that
$\Cc\I_{G_1}\otimes \Cc\I_{G_2}\subseteq \Cc\I_{G_1\times G_2}$ and $\mathcal{N}_{G_1}\otimes\mathcal{N}_{G_2}\subseteq \mathcal{N}_{G_1\times G_2}$.
It follows in particular, that if $P_i\rightarrow C_0(G_i^0)\rightarrow N_i$ is a Dirac triangle for $G_i$, $i=1,2$, then
$$P_1\otimes P_2 \rightarrow C_0((G_1\times G_2)^0)\rightarrow N_1\otimes N_2$$
is a Dirac triangle for $G=G_1\times G_2$.
Since the minimal tensor product behaves well with respect to reduced crossed products, we have canonical isomorphisms
\begin{gather*}
A\rtimes_r G  \cong (A_1\rtimes_r G_1)\otimes(A_2\rtimes_r G_2)\\
P_G(A)\rtimes_r G \cong (P_{G_1}(A_1)\otimes P_{G_2}(A_2))\rtimes_r G  \cong (P_{G_1}(A_1)\rtimes_r G_1)\otimes (P_{G_2}(A_2)\rtimes_r G_2),
\end{gather*}
where the first $\KKK$-equivalence follows from the above observation about Dirac triangles.
Under these identifications, the Baum--Connes assembly map $P_G(A)\rtimes_r G\rightarrow A\rtimes_r G$ decomposes as the exterior tensor product of the Baum--Connes assembly maps $P_{G_i}(A_i)\rtimes_r G_i\rightarrow A_i\rtimes_r G_i$.
Since the exterior tensor product of $\KKK$-equivalences is a $\KKK$-equivalence itself, the result follows.
\end{proof}

As a an immediate consequence we have the following:
\begin{coro}
Let $A_1,A_2\in \KKK^G$  such that at least one of the two is exact. Then $A_1\otimes_{G^0}A_2\in \KKK^G$, where $\otimes_{G^0}$ denotes the balanced minimal tensor product. If we further assume that $G$ satisfies strong BC with coefficients in $A_1$ and $A_2$, then $G$ satisfies strong BC with coefficients in $A_1\otimes_{G^0}A_2$.
\end{coro}
\begin{proof}
Proposition \ref{Prop:Strong BC for products} implies that $G\times G$ satisfies the strong Baum--Connes conjecture with coefficients in $A_1\otimes A_2$. View $G$ as a closed subgroupoid of $G\times G$ via the diagonal inclusion. Since $\Res_{G\times G}^G(A_1\otimes A_2)\cong A_1\otimes_{G^0} A_2$, the result follows from Theorem \ref{thm:BC passes to subgrpds}.
\end{proof}
The corresponding results for the classical Baum--Connes conjecture require further assumptions, since the K\"unneth formula for the computation of the $K$-theory of a tensor product does not always hold. A detailed study in this direction has been carried out by Dell'Aiera and the first named author in \cite{bondel:kun}. 

Using the methods developed in the present article the results on the classical Baum--Connes conjecture with coefficients in a minimal balanced tensor product presented in \cite{bondel:kun} can be extended to all \'etale groupoids.

\subsection{Group bundles}
We can now strengthen the results on group bundles obtained in \cite{bon:goingdownbc}.
\begin{theo}
Let $G$ be a second countable étale group bundle which is strongly amenable at infinity. We suppose further that $G^0$ is locally finite-dimensional. Let $A$ be a separable $G$-algebra which is continuous as a field of $C^*$-algebras over $G^0$. If the discrete group $G_u^u$ satisfies BC with coefficients in $A_u$ for every $u\in G^0$, then $G$ satisfies BC with coefficients in $A$.
\end{theo}
\begin{proof}
We will first prove this in the case that $G^{0}$ is compact and finite dimensional. Since we are working with second countable compact Hausdorff spaces the covering dimension of $X$ coincides with the small inductive dimension of $X$, which we are going to employ.
The proof will proceed by induction on the dimension of $X$. The zero-dimensional case has already been considered in \cite[Theorem 8.11]{bon:goingdownbc}. Assume that $dim(X)=n$ and the result has been shown for all spaces of dimension strictly smaller than $n$.
It is enough to show $(1-\gamma_A)K_*(A\rtimes_r G)=\{0\}$. So let $x\in (1-\gamma_A)K_i(A\rtimes_rG)$. By our assumption that $G_u^u$ satisfies BC with coefficients in $A_u$ and \cite[Lemma 8.10]{bon:goingdownbc} we have $q_{u,*}(x)=0$ for all $u\in G^{0}$. Using \cite[Lemma~3.4]{cen:conneskasparovconj} we can find an open neighbourhood $U_u$ of $u$ in $G^{0}$ such that $q_{\overline{U_u},*}(x)=0$. Next, apply the fact that $G^{0}$ has inductive dimension at most $n$ to replace each of the sets $U_u$ by a smaller neighbourhood of $u$ to assume additionally, that $\dim(\overline{U_u}\setminus U_u)\leq n-1$. Using compactness of $G^{0}$ we may find a finite subcover say $U_1,\ldots, U_l$ such that $\dim(\overline{U_i}\setminus U_i)\leq n-1$ and $q_{\overline{U_i},*}(x)=0$ for all $1\leq i\leq l$. Consider the open set $O:=G^{0}\setminus \bigcup_{i=1}^l \partial U_i$ and the associated ideal $A_O:=C_0(O)A$. Then $C_0(O)(A\rtimes_r G)= A_O\rtimes_r G_O$. 
Since $G$ is exact we have a short exact sequence of $C^*$-algebras
$$0\rightarrow A_O\rtimes G_O\rightarrow A\rtimes_r G\rightarrow A_Y\rtimes_r G_Y\rightarrow 0.$$
We want to consider the induced $6$-term exact sequence in $K$-theory. Since the boundaries $\partial U_i$ are closed and at most $(n-1)$-dimensional, so is their union $Y:=\bigcup_{i=1}^l \partial U_i$. Applying the induction hypothesis yields that $(1-\gamma_{A_{Y}})K_*(A_{Y}\rtimes_r G_{Y})=0$.
Hence the $6$-term exact sequence in $K$-theory shows that the canonical inclusion map induces an isomorphism
$$(1-\gamma_{A_O})K_i(A_O\rtimes_r G_O)\cong (1-\gamma_A)K_i(A\rtimes_r G).$$
It follows that there exists a unique element $x'\in (1-\gamma_{A_O})K_i(A_O\rtimes_r G_O)$ whose image under the inclusion map is $x$.
Furthermore, $O$ can be decomposed as a finite disjoint union of open sets $O=\bigsqcup_{j=1}^m W_j$ such that each $W_j$ is contained in at least one of the sets $U_i$ by a standard inclusion/exclusion argument. Corresponding to this decomposition is a decomposition of the crossed product $A_O\rtimes_r G_O$ as
$$A_O\rtimes G_O= \bigoplus_{j=1}^m A_{W_j}\rtimes_r G_{W_j}.$$
It follows that $x'=\sum_{j=1}^l x_j'$ where $x_j'$ is in the image of the inclusion map $(1-\gamma_{A_{W_j}})K_i(A_{W_j}\rtimes G_{W_j})\rightarrow (1-\gamma_{A_O})K_i(A_O\rtimes_r G_O)$. Thus, it is enough to show that $x_j'=0$ for all $j=1,\ldots, l$. To this end consider the short exact sequence
\begin{equation}\label{grpbundlesseq}
    0\rightarrow A_{W_j}\rtimes G_{W_j}\rightarrow A_{\overline{W_j}}\rtimes_r G_{\overline{W_j}}\rightarrow A_{\partial W_j}\rtimes_r G_{\partial W_j}\rightarrow 0.
\end{equation}
Since $\partial W_j\subseteq \partial U_i$ is a closed subset for some $U_i$, the boundary of $W_j$ has dimension at most $n-1$. Hence we can apply the induction hypothesis again, to see that $(1-\gamma_{A_{\partial W_j}})K_*(A_{\partial W_j}\rtimes_r G_{\partial W_j})=0$. The $6$-term exact sequence in $K$-theory induced by (\ref{grpbundlesseq}) shows that the inclusion map induces an isomorphism
$(1-\gamma_{A_{W_j}})K_i(A_{W_j}\rtimes G_{W_j})\rightarrow (1-\gamma_{A_{\overline{W_j}}})K_i(A_{\overline{W_j}}\rtimes_r G_{\overline{W_j}}).$ The image of $x_j'$ under this map coincides with the image of $x$ under the restriction map $q_{\overline{W_j},*}$. Since  $W_j\subseteq U_i$ for some $1\leq i\leq n$ we get that $q_{\overline{W_j},*}(x)=q_{\overline{W_j},*}(q_{\overline{U_i},*}(x))=0$ and this completes the proof for compact and finite dimensional unit spaces.

Finally, if $G^0$ is a locally finite-dimensional and locally compact space, write $G^0$ as an increasing union $\bigcup K_n$ of compact subsets of $G^0$ such that $K_n\subseteq \mathrm{int}(K_{n+1})$. Using that $G^0$ is locally finite-dimensional, we may assume that each $K_n$ has finite dimension. The first part of this proof implies that $G|_{K_n}$ satisfies BC with coefficients in $A|_{K_n}$ and $G|_{\partial K_n}$ satisfies BC with coefficients in $A|_{\partial K_n}$. A $6$-term exact sequence argument (using exactness of $G$!) then shows that $G|_{\mathrm{int}(K_n)}$ satisfies BC with coefficients in $A|_{\mathrm{int}(K_n)}$ for all $n\in\N$.
Now we can write $A=\lim A|_{\mathrm{int}(K_n)}$. Picking an approximate unit $(\rho_n)_n$ with $\rho_n\in C_c(\mathrm{int}(K_n))$ we can define completely positive contractions $A\rightarrow A|_{\mathrm{int}(K_n)}$ by $a\mapsto \rho_n a$ which converge pointwise to the identity.  Hence the sequence  $A|_{\mathrm{int}(K_n)}$ is admissible and the result follows from Proposition \ref{prop:limits}.
\end{proof}

The class of infinite dimensional spaces to which the previous result applies includes all locally compact CW complexes. An example of a compact space that is not covered by the result is the Hilbert cube.

\backmatter

\bibliographystyle{amsplain-init-nodash}
\bibliography{BibliographyBST}                 
\end{document}